\newcommand{\ignore}[1]{}
\newcommand{\hide}[1]{}
\DeclareMathOperator{\Der}{Der}
\newcommand{\F}{\mathbb F}
\newcommand{\N}{\mathbb N}
\newcommand{\Z}[0]{\mathbb Z}
\newtheorem{dummy}{Dummy}
\newtheorem{lemma}[dummy]{Lemma}
\newtheorem{thm}[dummy]{Theorem}
\newtheorem{prop}[dummy]{Proposition}
\theoremstyle{definition}
\theoremstyle{remark}
\newtheorem{rem}[dummy]{Remark}
\newtheorem*{rem*}{Remark to ourselves}
\begin{document}

\bibliographystyle{alpha}

\title{Graded Lie algebras of maximal class of type $p$}

\author{Valentina Iusa}
\address{Charlotte Scott Research Centre for Algebra\\
University of Lincoln \\
Brayford Pool
Lincoln, LN6 7TS\\
United Kingdom}
\email{valentina\_iusa@hotmail.it}

\author{Sandro Mattarei}
\address{Charlotte Scott Research Centre for Algebra\\
University of Lincoln \\
Brayford Pool
Lincoln, LN6 7TS\\
United Kingdom}
\email{smattarei@lincoln.ac.uk}

\author{Claudio Scarbolo}
\address{Dipartimento di Matematica\\
Universit\`a degli Studi di Trento\\
via Sommarive 14\\
I-38123 Povo (Trento)\\
Italy}
\email{claudio.scarbolo-1@unitn.it}

\subjclass[2010]{17B70 (Primary), 17B65, 17B05 (Secondary)}

\keywords{Modular Lie algebra; graded Lie algebra; Lie algebra of maximal class}

\begin{abstract}
The algebras of the title are infinite-dimensional graded Lie algebras
$
L= \bigoplus_{i=1}^{\infty}L_i,
$
over a field of positive characteristic $p$,
that are generated by an element of degree $1$ and an element of degree $p$, and satisfy
$[L_i,L_1]=L_{i+1}$ for $i\ge p$.
In case $p=2$ such algebras were classified by Caranti and Vaughan-Lee in 2003.
We announce an extension of that classification to arbitrary prime characteristic, and prove several major steps in its proof.
\end{abstract}

Author Accepted Manuscript

Published in Journal of Algebra \textbf{588} (2021), 77-117.

\verb#https://dx.doi.org/10.1016/j.jalgebra.2021.08.013#

\bigskip

\maketitle

\section{Introduction}\label{sec:intro}
A Lie algebra $L$ is nilpotent if its Lie power $L^{c+1}$ vanishes for some positive integer $c$,
and the smallest such integer is the nilpotency class of $L$.
If $L$ has finite dimension $d\ge 2$, then it is well known that $c\le d-1$.
The difference $d-c$ is the {\em coclass} of $L$.
Lie algebras of coclass one are said to be {\em of maximal class,} but
they are often called {\em filiform} in the literature.

The concept of coclass readily extends to infinite-dimensional Lie algebras
by taking the limit superior of the coclass of their quotients $L/L^{i+1}$.
In particular, a residually nilpotent infinite-dimensional Lie algebra $L$ has coclass one or, by a slight abuse of terminology, has maximal class,
if $\dim(L/L^2)=2$ and $\dim(L^i/L^{i+1})=1$ for all $i>1$.

Interest in coclass originated from group theory, where celebrated {\em coclass conjectures} of Leedham-Green and Newman~\cite{LGN} initiated a paradigm shift
in the study of finite $p$-groups, which are notoriously hard to classify in terms of nilpotency class,
directing attention towards pro-$p$ groups, which encode entire families of finite $p$-groups as their quotients.
The most easily stated of their five conjectures is conjecture C: pro-$p$ groups of finite coclass are soluble.
After all coclass conjectures were established, with the final steps in~\cite{L-G} and~\cite{Sha:coclass},
Shalev and Zelmanov used that model to develop a coclass theory for Lie algebras of characteristic zero in~\cite{ShZe:narrow}.

Shalev and Zelmanov focused on $\N$-graded Lie algebras, meaning with a grading of the form
$L= \bigoplus_{i=1}^{\infty}L_i$
(or, equivalently, $\mathbb{Z}$-graded but with $L_i=0$ for $i \le 0$).
Furthermore, a natural assumption coming from the viewpoint of pro-$p$ groups was that
$L$ should be generated by their first homogeneous component $L_1$.
It will be apparent later in this Introduction that this assumption is also necessary for their coclass theory to hold.
Working over a field of zero characteristic was also necessary,
as shown by insoluble graded Lie algebras of maximal class constructed by Shalev in~\cite{Sha:max} from certain finite-dimensional simple modular Lie algebras.

A study of the modular case was initiated by Caranti, Mattarei and Newman in~\cite{CMN},
illustrating how intricate the picture is in positive characteristic.
In particular, it showed that over any field $F$ of positive characteristic there are $|F|^{\aleph_0}$ infinite-dimensional
non-isomorphic graded Lie algebras of maximal class, generated by $L_1$,
arising through procedures called {\em inflation} and {\em deflation} from the algebras described by Shalev in~\cite{Sha:max},
possibly applied countably many times, and one further limit construction.
Remarkably, Caranti and Newman were able to prove in~\cite{CN} that
suitable application of the machinery developed in~\cite{CMN} is able to account for all graded Lie algebra of maximal class, generated by $L_1$,
in odd characteristic.
A matching result in characteristic $2$, which presents further complications, was obtained in~\cite{Ju:maximal}.
Thus, modular graded Lie algebras of maximal class generated by $L_1$ can be considered as classified, although the classification statement
may look unconventional as it involves the possible application of certain steps countably many times.
This classification is also important because classifications of other classes of Lie algebras rely on it as an essential ingredient.
In particular, this is the case for the algebras
studied in this paper, and also for {\em thin} Lie algebras,
see~\cite{AviMat:A-Z} and the references therein.

Other $\N$-gradings of Lie algebras of maximal class are of great interest, where
the Lie algebra is not generated by its first homogeneous component $L_1$.
One notable example in characteristic zero is the positive part of the Witt algebra $\Der(F[X])$,
hence the subalgebra $W$ with graded basis given by
$E_i=x^{i+1}\partial/\partial x$
for $i>0$ (having degree $i$), with Lie product
$[E_i,E_j]=(j-i)E_{i+j}$.
It is easily seen that $W$, which we will discuss further at the end of Section~\ref{sec:sequence}, is a Lie algebra of maximal class,
and is generated by $E_1$ and $E_2$, which have different degrees in the natural grading exhibited here.
Shalev and Zelmanov's showed in~\cite[Theorem~7.1]{ShZe:narrow-Witt} that besides $W$ there are precisely two further
(infinite-dimensional) graded Lie algebras of maximal class $L$, over any given field of characteristic zero,
admitting an $\N$-grading such that $L$ is generated by $L_1$ and $L_2$.
Those two are both soluble, with a maximal abelian ideal of codimension $1$ or $2$,
and will be also recalled in Section~\ref{sec:sequence}.
Note that those conditions on $L$ in~\cite[Theorem~7.1]{ShZe:narrow-Witt} allow the possibility of $L_1$ being two-dimensional and generating $L$ by itself,
and that classification result is up to isomorphism of Lie algebras, not of graded Lie algebras:
the Lie algebra of maximal class with an abelian maximal ideal is counted once in the result, but admits two such gradings.

To make progress in positive characteristic, where the landscape is distinctively more complex,
it is best to adopt a more restrictive definition (see Remark~\ref{rem:other_types} for a comparison).
Graded Lie algebras of maximal class $L$ which are generated by $L_1$, as in~\cite{CMN,CN,Ju:maximal}, will be called {\em algebras of type $1$} here.
Then, necessarily, $L_1$ will be two-dimensional, and $L_i$ will be one-dimensional for all $i>1$ (under our blanket infinite-dimensionality assumption).
We define an {\em algebra of type $n$,} for $n>1$, as a graded Lie algebra $L=\bigoplus_{i=1}^{\infty}L_i$
with $\dim L_i=0$ for $1<i<n$ and $\dim L_i=1$ otherwise,
such that $[L_i,L_1]=L_{i+1}$ for all $i\ge n$.
Such $L$ is a Lie algebra of maximal class, and is generated by $L_1$ and $L_n$, both one-dimensional.
However, beware that the latter assumptions alone on $L$ (of infinite dimension) are not sufficient to ensure
that $L$ is an algebra of type $n$, see Remark~\ref{rem:other_types}.
Thus, \cite[Theorem~7.1]{ShZe:narrow-Witt} showed, in particular, that over a field of characteristic zero there are precisely three algebras of type $2$,
up to isomorphism of graded Lie algebras.

A classification of algebras of type $n$ is bound to involve all the complexity of algebras of type $1$ due to the following simple observation.
An algebra of type $1$ is called {\em uncovered} if there is an element $z\in L_1$ such that $[L_i,z]=L_{i+1}$ for all $i>1$.
This condition can always be achieved by extending the ground field, and is automatic if that is uncountable.
If $L$ is an uncovered algebra of type $1$, then the subalgebra generated by $z$ and $L_n$ is an algebra of type $n$.
Note that different choices of $z$ in $L_1$ may lead to non-isomorphic subalgebras of type $n$.

Caranti and Vaughan-Lee classified algebras of type $2$ over a field of odd characteristic in~\cite{CVL00}.
They showed that all algebras of type $2$ which are not graded subalgebras of algebras of type $1$ in the way described above are soluble,
and belong to an explicitly described family (with an additional family appearing in characteristic three).
They anticipated that the case of characteristic two would be more complicated,
but then a classification in that case, which they achieved in~\cite{CVL03}, actually turned out to be simpler to state.
It is the special case $p=2$ of Theorem~\ref{thm:classification} below.

What makes the classification of algebras of type $n$ more approachable when $n$ equals the characteristic is
a generalization of a {\em translation} process discovered in~\cite{CVL03}, which we now briefly describe.
To start with, having chosen a nonzero element $z$ in $L_1$, of an algebra of type $n>1$,
we extend that to a graded basis of $L$ by choosing a nonzero element of $L_n$, and then recursively setting $e_i=[e_{i-1},z]$ for $i>n$.
The algebra $L$ is then completely described by the sequence of scalars $(\beta_i)_{i>n}$, where $[e_i,e_n]=\beta_ie_{i+n}$,
because knowing the adjoint action of the generators $z$ and $e_n$ determines
the full multiplication table of $L$ via recursive use of the Jacobi identity.
When $n$ equals the characteristic $p$, adding a constant value $\delta$ to each $\beta_i$ in the sequence
produces the sequence of another algebra of type $p$, called a {\em translate} of $L$ and denoted by $L(\delta)$
(see Proposition~\ref{prop:translation}).
We are now ready to announce the following classification result, whose special case $p=2$ was proved in~\cite{CVL03}.

\begin{thm}\label{thm:classification}
Let $L$ be an algebra of type $p$, over a field of characteristic $p>0$.
If $p>2$ assume $(L^2)^2 \subseteq L^{3p+3}$.
Then, $L$ is either a graded subalgebra of an uncovered algebra of type $1$, or a translate of that,
or $L$ belongs to an explicitly described countable family $\mathcal{E}$.
\end{thm}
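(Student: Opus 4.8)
The plan is to reduce the whole problem to the combinatorics of the defining sequence $(\beta_i)_{i>p}$ and to determine exactly which sequences can occur. Having fixed $z\in L_1$, $e_p\in L_p$, $e_i=[e_{i-1},z]$ and $[e_i,e_p]=\beta_ie_{i+p}$, the first task is to extract the constraints the Jacobi identity imposes on $(\beta_i)$. Using $e_{i+1}=[e_i,z]$ one writes every product $[e_i,e_j]$ as a universal polynomial in the $\beta_k$ times $e_{i+j}$, and then the Jacobi identity on the triples $(e_i,e_p,e_p)$, $(e_i,e_j,e_p)$ and $(e_i,e_p,z)$ yields a system of recursions linking $\beta_{i+1},\beta_{i+2},\dots$ to $\beta_i$ and earlier terms, with coefficients that are binomial coefficients modulo $p$. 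The structurally important invariant to read off from such a sequence is the pattern of degrees $i$ with $[L_i,L_p]=0$, i.e. with $\beta_i=0$ — after possibly applying a translation so that the "generic" value of $\beta_i$ becomes $0$ — which plays the role that the pattern of two-step centralisers plays in the type~$1$ theory (cf.~\cite{CMN,CN,Ju:maximal,AviMat:A-Z}); the recursions should force the blocks of this pattern to have lengths drawn from a short explicit list (powers of $p$, twice a power of $p$, and a few small exceptions).

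Next I would use the running hypothesis and the translation process to reduce to finitely many normal forms for the start of the sequence. Since $L^m=\bigoplus_{k\ge p+m-1}L_k$ for $m\ge 2$, the assumption $(L^2)^2\subseteq L^{3p+3}$ translates to $[e_i,e_j]=0$ whenever $p<i\le j$ and $i+j\le 4p+1$, so $\beta_i$ is forced to vanish, or is completely determined by the Jacobi recursions, for a range of small indices, leaving only finitely many ways in which the sequence can begin. Then, using Proposition~\ref{prop:translation} — replacing $L$ by the translate $L(\delta)$ changes $(\beta_i)$ to $(\beta_i+\delta)$ — I would bring the start of the sequence into one of a handful of normal forms. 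For each of them the combinatorial constraints either lead to a contradiction or identify the algebra: either $(\beta_i)$ coincides, from some degree on, with the sequence of the subalgebra generated by $z$ and $L_p$ inside an uncovered algebra of type~$1$ — the "uncovered" hypothesis being exactly what makes that subalgebra of type $p$, as recalled in the Introduction, and the possible such sequences being computable from the known classification of type~$1$ algebras — so that $L$ is itself a translate of such a subalgebra (or, when the applied translation was trivial, such a subalgebra); or the pattern is one of the sporadic or one-parameter shapes collected into the family $\mathcal E$.

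The heart of the argument, and the step I expect to be the main obstacle, is the inductive propagation: having shown that $(\beta_i)$ agrees with the sequence of a known algebra (a type~$1$ subalgebra, a translate of one, or a member of $\mathcal E$) up to degree $N$, one must prove that it agrees at degree $N+1$ as well, using only the Jacobi recursions. This is delicate because those recursions have characteristic‑$p$ coefficients, so the analysis bifurcates according to the $p$-adic expansion of $N$, and because several initial patterns have to be propagated in parallel; small primes — in particular $p=3$, already responsible for an extra family in the type~$2$ case of \cite{CVL00} — are likely to require separate treatment. A final, more routine, task is to check that the three output classes are coherent: that $\mathcal E$ is closed under translation, so that no "translate of a member of $\mathcal E$" is missing from the trichotomy, and that the classes are genuinely distinct. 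The case $p=2$ of \cite{CVL03} should serve both as a template for the bookkeeping and as a consistency check at the base of the induction.
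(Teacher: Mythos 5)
Your general framework---encode $L$ by the sequence $(\beta_i)_{i>p}$, extract Jacobi constraints, normalize by translation, then identify the sequence---is indeed the framework of the paper. But the two steps you yourself flag as the heart of the matter are where all the content lies, and your outline supplies no mechanism for either. The claim that the Jacobi recursions ``force the blocks of the pattern to have lengths drawn from a short explicit list'' is the single most substantial result here (Theorem~\ref{thm:first_length}), and it does not come from iterating recursions index by index: it requires (a) the a priori bounds $\ell/2\le\ell_r\le\ell$ on constituent lengths (Lemmas~\ref{lemma:upper_bound} and~\ref{lemma:constituent_bound}), and (b) the observation that the single family of relations $[e_{\ell+1},e_j]=0$ for $p\le j<\ell_2$ is equivalent to the vanishing of the coefficients of $(x-1)^{\ell-p+1}g(x)$ over a range of length at least $\ell/2$, where $g$ is a degree-$(p-1)$ polynomial encoding the last $p$ entries of the first constituent; Lucas' theorem applied to that polynomial condition (Theorem~\ref{thm:polynomials}) is what pins down $\ell$, and even then spurious values $q+p<\ell<q+2p-1$ survive and must be killed by further ad hoc Lie computations. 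Also, a correction: the hypothesis $(L^2)^2\subseteq L^{3p+3}$ says nothing directly about $\beta_i$ for small $i$, since $e_p\notin L^2$; its only role, after translating so that $\beta_{p+1}=0$, is to force $\ell>4p$ (hence $q>p$), not to ``determine $\beta_i$ for a range of small indices.''

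Your identification step is also not workable as stated. Matching $(\beta_i)$ against ``the possible sequences computable from the known classification of type~$1$ algebras'' presupposes an enumerable list, but that classification yields $|F|^{\aleph_0}$ algebras built by countably many inflation/deflation steps; and even a successful match would not by itself produce the required embedding of $L$ into a type-$1$ algebra. The paper replaces this with an intrinsic, local criterion: $L$ is a graded subalgebra of an uncovered algebra of type~$1$ if and only if every constituent is \emph{ordinary}, i.e.\ its last $p$ entries are equal (Proposition~\ref{OrdinaryEnding}), whose nontrivial direction is proved by explicitly building the overalgebra through successive split extensions by derivations of a free presentation. The induction of Section~\ref{sec:length2q} then only has to propagate ordinariness from one constituent to the next, not equality with a prescribed global sequence. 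Two further points: the family $\mathcal{E}$ is not closed under translation (a proper translate of a member has first constituent length $2p$), so the coherence check you propose is not the right one; and note that even this paper proves only the cases $\ell=2q$ and the construction of $\mathcal{E}$, deferring $\ell=q+p$ and the uniqueness in $q<\ell<q+p$ to a separate paper---a sign that the propagation step you leave open is not a routine bookkeeping matter.
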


The family $\mathcal{E}$ is empty when $p=2$, as in~\cite{CVL03}.
Theorem~\ref{thm:classification} was proved by the third author in his PhD thesis~\cite{Sca:thesis}, under supervision of the second author.
The main goal of this paper is to prove some major steps in the proof of Theorem~\ref{thm:classification}.

Before describing the results of this paper in some detail we briefly digress on algebras of arbitrary type $n>1$.
For $n<p$ those were investigated by Ugolini in his PhD thesis~\cite{Ugo:thesis}, written under the joint supervision of Andrea Caranti and the second author.
Ugolini's work predated, and influenced, Scarbolo's work on algebras of type $p$,
but it turned out that working in such generality is fraught with technical difficulties, especially when $n$ gets close to $p$.
Nevertheless, Ugolini obtained important structural information on algebras of type $n$,
and even a partial classification if $p>4n$.
A new take on part of Ugolini's results is presented in~\cite{MatUgo:type_n}.
Our technical results in Sections~\ref{sec:constituents}--\ref{sec:characterization} cover several basic aspects of algebras of type $n$,
before we set $n=p$ and proceed towards the main results of this paper.

The original version~\cite[Theorem 2.6]{Sca:thesis} of Theorem~\ref{thm:classification}
did not assume the hypothesis $(L^2)^2\subseteq L^{3p+3}$ (which was unnecessary in~\cite{CVL03}, where $p=2$).
However, the proof given in~\cite{Sca:thesis} appears incomplete without that hypothesis.
That was discovered during substantial simplification work of a portion of Scarbolo's proof
achieved by the first author in~\cite{Iusa:thesis}, again under the supervision of the second author.
We will comment further below on the significance of this additional hypothesis.

As we mentioned earlier, to an algebra of type $n>1$ we associate a sequence of scalars $(\beta_i)_{i>n}$,
which we call {\em the sequence} of $L$.
Depending on a choice of generators $z$ and $e_n$ the sequence is only unique to $L$ up to scalar multiples.
For algebras of type $1$
that was called {\em the sequence of two-steps centralizers,}
reminiscent of its group theoretic origins in N.~Blackburn's theory of $p$-groups of maximal class.
As was the case for algebras of type $1$ in~\cite{CMN,CN,Ju:maximal},
the sequence $(\beta_i)_{i>n}$
can be split into a union of adjacent blocks called the {\em constituents} of $L$.
Assuming $L$ not metabelian, which excludes precisely two algebras of each type $n>1$,
such blocks are all finite, and their lengths $\ell_1,\ell_2,\ldots$
(with an {\em ad hoc} definition in case of $\ell_1$)
play an important role in the theory.
In particular, the length $\ell=\ell_1$ of the first constituent, which has an {\em ad hoc} definition,
is the most fundamental invariant which one associates to $L$.
In case of algebras of type $1$ the first constituent length (if finite)
is always twice a power $q$ of the characteristic~\cite[Theorem~5.5]{CMN}.

The constituent lengths of an algebra of type $n$ (including $1$) admit the following simple characterization
(Proposition~\ref{prop:constituent_lengths}) under the assumption that $\beta_{n+1}$ vanishes:
$\ell=\ell_1=\dim\bigl(L^2/(L^2)^2\bigr)+n$,
and
$\ell_r=\dim\bigl((L^2)^r/(L^2)^{r+1}\bigr)$ for $r>1$.
The assumption $\beta_{n+1}=0$ is inessential when $n=1$, as is naturally achieved by an appropriate choice of the generators.
Fortunately, when the type $n$ equals the characteristic it can be achieved by passing to a translate of $L$.
Thus, in the context of Theorem~\ref{thm:classification}, after taking such a translate of $L$ to satisfy $\beta_{p+1}=0$,
the hypothesis $(L^2)^2 \subseteq L^{3p+3}$ on $L$, which is unaffected by passing to the translate,
reads $\ell>4p$ in terms of that translate.
Under that hypothesis, in the following result we determine all the possibilities for $\ell$.

\begin{thm}\label{thm:first_length}
Let an algebra of type $p$, over a field of odd characteristic $p$, have its first constituent of finite length $\ell>4p$.
Then
\begin{itemize}
\item either $\ell=2q$, where $q$ is a power of $p$,
\item or $q<\ell\le q+p$, where $q$ is a power of $p$ and $\ell$ is even.
\end{itemize}
\end{thm}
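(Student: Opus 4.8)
The plan is to phrase everything in terms of the sequence $(\beta_i)_{i>p}$ of $L$ and the structure constants $c_{i,j}$ determined by $[e_i,e_j]=c_{i,j}e_{i+j}$, and to extract as much as possible from the Jacobi identity. First I would normalise: by Proposition~\ref{prop:translation} we may replace $L$ by a translate and so assume $\beta_{p+1}=0$, the normalisation with respect to which $\ell$ is measured; this leaves $\ell$ unchanged, and under it Proposition~\ref{prop:constituent_lengths} gives $\ell=\dim\bigl(L^2/(L^2)^2\bigr)+p$. Applying the Jacobi identity to triples $(e_i,e_j,z)$ yields the recursion $c_{i,j+1}=c_{i,j}-c_{i+1,j}$ with $c_{i,p}=\beta_i$, hence the closed form
\[
c_{i,j}=\sum_{k=0}^{j-p}(-1)^{k}\binom{j-p}{k}\beta_{i+k}\qquad(i,j\ge p).
\]
Since $(L^2)^2$ is spanned by the $e_{i+j}$ with $i,j\ge p+1$ and $c_{i,j}\ne0$, the formula for $\ell$ says precisely that $c_{i,j}=0$ whenever $i,j\ge p+1$ and $i+j\le\ell$, while $c_{i,j}\ne0$ for some such $i,j$ with $i+j=\ell+1$. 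Substituting the closed form (first with $j=p+1$, then inductively) one checks that this is equivalent to
\[
\beta_{p+1}=\beta_{p+2}=\cdots=\beta_{\ell-p}=0,\qquad \beta_{\ell-p+1}\ne0,
\]
and moreover that every $c_{i,j}$ of total degree $\ell+1$ then equals $\pm\beta_{\ell-p+1}$. In particular, were $\ell$ odd we could take $i=j=(\ell+1)/2\ge p+1$, and $c_{i,i}=\pm\beta_{\ell-p+1}\ne0$ would contradict $c_{i,i}=0$; so $\ell$ is automatically even. It remains to pin down the index $\ell-p+1$ at which $(\beta_i)$ first revives.

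For that one must bring in the only genuinely nonlinear relations, coming from the Jacobi identity on triples $(e_i,e_j,e_p)$:
\[
c_{i,j}\,\beta_{i+j}=\beta_{j}\,c_{i,j+p}-\beta_{i}\,c_{j,i+p}\qquad(i,j\ge p).
\]
I would feed the vanishing pattern just obtained into these, processing them in order of increasing total degree starting just above $\ell$. In the range where the right-hand side still involves only $\beta$'s within the first two constituents, these relations --- together with the closed form and the antisymmetry $c_{i,j}=-c_{j,i}$ --- determine $\beta_m$, for $m$ ranging over what will become the second constituent, as explicit binomial combinations of $\beta_{\ell-p+1}$ and the next independent parameter of $(\beta_i)$.

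The decisive step is then combinatorial. Reducing those binomial combinations modulo $p$ by Lucas' theorem, one finds that the (non)vanishing pattern they must exhibit for the relations to be consistent can occur only when the base-$p$ expansion of $\ell$ (or of a closely related quantity) is extremely sparse: either it is a single digit $2$ followed by zeros, giving $\ell=2q$ with $q=p^{a}$; or it is a single digit $1$ followed by a block of zeros and a short tail, which confines $\ell$ to the interval $q<\ell\le q+p$. Equivalently --- and this is the route I would attempt first --- encoding the relevant stretch of $(\beta_i)$ in a generating function over the ground field turns the relations into a functional equation whose only solutions are essentially $q$-periodic (denominator a power of $1-t^{q}$), at most perturbed by one correction term of degree $\le p$, and $\ell$ is read off from the shape of that solution. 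The hypothesis $\ell>4p$ is exactly what makes the first two constituents long enough for this base-$p$ (equivalently, asymptotic) bookkeeping to be unobstructed by low-degree coincidences, and it also forces $q\ge p^{2}$, which is convenient in the estimates.

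The hard part, as already in~\cite{CMN} for type~$1$, is this last combinatorial step, but here it is genuinely more delicate. For type~$1$ the analogous relation reads $c_{i,j}\beta_{i+j}=\beta_{j}c_{i,j+1}+\beta_{i}c_{j,i+1}$, and the corresponding analysis forces $\ell=2q$ outright; the shift by $p$ in $c_{i,j+p}$, and the presence of the coefficients $\beta_{p+1},\dots,\beta_{2p}$ that our normalisation controls only through higher relations, are precisely what opens up the new possibility $q<\ell\le q+p$. Showing that no value of $\ell$ outside $\{2q\}\cup(q,q+p]$ can survive --- equivalently, that at most one short tail beyond a power of $p$ is compatible with the relations --- is where the careful bookkeeping with binomial coefficients modulo $p$ is concentrated, and I expect it to occupy the bulk of the argument.
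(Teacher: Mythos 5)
Your setup is sound as far as it goes: the closed form for the structure constants, the evenness of $\ell$, and the Jacobi relation $c_{i,j}\beta_{i+j}=\beta_j c_{i,j+p}-\beta_i c_{j,i+p}$ are all correct, and the general strategy of converting these into binomial-coefficient conditions analysed via Lucas' theorem is indeed the right spirit (it is also the spirit of the paper's proof, which packages the relevant relation as a vanishing condition on a range of coefficients of $(x-1)^{\ell-p+1}g(x)$, where $g$ encodes the last $p$ entries of the first constituent). But the proposal stops exactly where the proof begins. The ``decisive combinatorial step'' is asserted, not carried out: you say that ``one finds'' the base-$p$ expansion of $\ell$ must be sparse, and that you ``expect'' the bookkeeping to occupy the bulk of the argument. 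Two concrete ingredients are missing and cannot be waved through. First, you never establish a lower bound on the length $\ell_2$ of the second constituent. The range of degrees over which your Jacobi relations involve only the first two constituents is governed by $\ell_2$, and without the bound $\ell_2\ge\ell/2$ (which requires its own Lie-algebra argument, using $[e_{\ell-j},[e_{j+1},e_j]]=0$ for $p\le j<\ell/2$) the window of usable relations is too short to force anything about the top half of the binomial expansion of $(x-1)^{\ell-p+1}$, and the whole Lucas analysis collapses.

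Second, even granting that bound, the combinatorial step does \emph{not} deliver the stated conclusion: the honest output of the vanishing conditions is that either $\ell=2q$ or $q<\ell\le q+2p-3$ (roughly, because the factor $g(x)$ of degree $p-1$ absorbs a tail of length up to $2p$ rather than $p$). The values $q+p<\ell\le q+2p-3$ are genuinely consistent with every relation you list, and excluding them requires a separate and substantially harder argument: one must first pin down $\ell_2\in\{q-1,q\}$ by computing products such as $[e_{\ell_2+k_0-s},e_{q+s+p}]$ in two ways, and then derive a contradiction from further identities in degrees near $2q$. Your proposal shows no awareness that this spurious range arises, so as written it would ``prove'' a statement with a gap of width $p-3$ in the allowed interval for $\ell$. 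In short: correct skeleton, but the two load-bearing components --- the lower bound on $\ell_2$ and the elimination of $q+p<\ell<q+2p-1$ --- are absent, and the central combinatorial claim is left as an expectation rather than an argument.
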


Each of the possibilities for $\ell$ listed in Theorem~\ref{thm:first_length} does occur for some of the algebras of Theorem~\ref{thm:classification}.
In particular, the first constituent of $L$ has length $\ell=2q$ or $\ell=q+p$ whenever $L$
is a graded subalgebra of an uncovered algebra of type $1$, or a translate of that (see Section~\ref{sec:type_p}).
The remaining possibilities occur for the algebras of the exceptional family $\mathcal{E}$.
The analogue~\cite[Proposition~3.1]{CVL03} of Theorem~\ref{thm:first_length} in characteristic two does not require any assumption on $\ell$.

Theorem~\ref{thm:first_length} is the most substantial result of this paper.
Our proof is a considerably streamlined version of its original proof in~\cite{Sca:thesis}, and offers a more transparent route to the desired conclusion.
Our main improvement is the use of one particular relation holding in $L$, rather than several as in~\cite{Sca:thesis},
and translating the information it provides to a polynomial setting
(which is Equation~\eqref{eq:range} in Theorem~\ref{thm:polynomials}, where $k=\ell-p+1$).
This follows an approach pioneered in~\cite{Mat:chain_lengths}, which we briefly recall at the end of Section~\ref{sec:type_p}.
Although the polynomial formulation of this information
still requires some nontrivial arguments in order to draw the desired conclusions on $\ell$,
it provides in itself a plausible indication as to why precisely those values for $\ell$ occur in the conclusion of Theorem~\ref{thm:first_length}.

A similar result to Theorem~\ref{thm:first_length} is proved in~\cite{MatUgo:type_n} for algebras of type $n<p$,
in which case the proof presents further challenges.

Our next result forms another portion of the proof of Theorem~\ref{thm:classification}.
It characterizes those algebras of type $p$ which occur as graded subalgebras of uncovered algebras of type $1$,
in terms of their first constituent length alone.

\begin{thm}\label{thm:length2q}
Let $L$ be an algebra of type $p$, over a field of odd characteristic $p$, with first constituent length $\ell=2q$, where $q>p$ is a power of $p$.
Then $L$ is a graded subalgebra of an uncovered algebra of type $1$.
\end{thm}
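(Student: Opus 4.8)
The plan is to produce an uncovered algebra $M$ of type $1$, together with a nonzero $z'\in M_1$, such that the graded subalgebra $\langle z',M_p\rangle$ of $M$ is isomorphic to $L$. Since an algebra of type $p$ is determined, up to isomorphism, by its sequence, and that sequence is well defined up to a single nonzero scalar, it suffices to arrange that the sequence of $\langle z',M_p\rangle$ agrees, up to a scalar, with the sequence $(\beta_i)_{i>p}$ of $L$. The opposite direction — computing the sequences that arise from an uncovered algebra of type $1$ whose pertinent constituent has length $2q$ — is carried out in Section~\ref{sec:type_p}, producing an explicit list; the task is therefore to show that the hypothesis $\ell=2q$ forces the sequence of $L$ onto that list. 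Throughout we may take $\beta_{p+1}=0$ (replace $L$ by a translate), so that the hypothesis becomes $\dim\bigl(L^2/(L^2)^2\bigr)=2q-p$ by Proposition~\ref{prop:constituent_lengths}.

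I would first pin down the initial segment of the sequence. Feeding $\ell=2q$ into Theorem~\ref{thm:polynomials}, with $k=\ell-p+1=2q-p+1$, converts the defining relation of $L$ into polynomial identities constraining $\beta_{p+1},\dots,\beta_{3q}$; together with $\beta_{p+1}=0$ and the dimension count above, these should determine $\beta_i$ for every $i$ up to roughly $2q+p$, and in particular exhibit the pattern characteristic of a first constituent of length $2q$, a power of $p$ doubled, cf.\ \cite[Theorem~5.5]{CMN}. One then recognizes this initial segment as the start of the sequence of a specific uncovered algebra $M$ of type $1$ occurring in the classification of \cite{CMN,CN} whose first constituent also has length $2q$, together with a distinguished element $z'\in M_1$. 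As noted in the Introduction, the polynomial reformulation already makes the emergence of the value $2q$ plausible, but extracting the full initial segment from it still requires some genuine work.

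It remains to propagate this identification to the whole sequence, which I would do by induction on the number of constituents of $L$. Having matched the first constituent, one passes to the homogeneous ideal $(L^2)^2$ — whose successive quotients record the later constituent lengths, again by Proposition~\ref{prop:constituent_lengths} — and recognizes there a graded algebra to which the statement, with strictly smaller data, applies; on the side of $M$ the recursive description of algebras of type $1$ from \cite{CMN,CN} supplies the matching extension, and uniqueness in that classification closes the induction. The main obstacle is precisely this inductive step: one must show that the value $\ell=2q$, as opposed to the alternative $q<\ell\le q+p$ permitted by Theorem~\ref{thm:first_length}, persists along the derived series and never permits an exotic continuation, i.e.\ never places $L$ into the exceptional family $\mathcal{E}$; controlling this — and verifying along the way that the constructed $M$ is genuinely uncovered, with $\langle z',M_p\rangle\cong L$ — is where the substance of the proof lies.
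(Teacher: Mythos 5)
Your overall shape (settle the first constituent via Theorem~\ref{thm:polynomials}, then induct over constituents) matches the paper's, but the proposal is missing the one idea that makes the theorem tractable, and the inductive step you describe does not work as stated. The paper never tries to recognize the sequence of $L$ inside the classification of algebras of type $1$, nor to name a specific overalgebra $M$. Instead it invokes Proposition~\ref{OrdinaryEnding} (that is, Proposition~\ref{lemma:Ugolini} together with the computation at the start of Section~\ref{sec:constituents}): $L$ embeds into an uncovered algebra of type $1$ \emph{if and only if} every constituent is ordinary, meaning its last $p$ entries are equal. This converts the embedding problem into a purely local condition on the sequence, and the overalgebra is then built abstractly, by repeatedly adjoining an outer derivation, rather than located in the list of~\cite{CMN,CN}. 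The substance of the proof is then an explicit Lie-bracket induction: the first constituent is ordinary because Theorem~\ref{thm:polynomials} forces $g(x)=(x-1)^{p-1}$ when $k=2q-p+1$ (Subsection~\ref{subsec:further_info}), and if the $(r-1)$th constituent is ordinary ending in $\lambda$, then expanding products such as $[e_{s-p},e_{2q+j}]$ (case $\ell_r=q$) or $[e_{s+\ell_r-q-j},e_{2q+j}]$ (case $\ell_r>q$, after first showing $\ell_r>q+p$) produces a triangular linear system forcing the last $p$ entries of the $r$th constituent to be equal.

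Two concrete problems with your route. First, Theorem~\ref{thm:polynomials} only constrains the last $p$ entries of the \emph{first} constituent (the polynomial $g$ encodes $\beta_{\ell-p+1},\dots,\beta_{\ell}$ and nothing beyond degree $\ell$), so it cannot ``determine $\beta_i$ for every $i$ up to roughly $2q+p$'' and gives no handle on the second constituent. Second, the recursion ``pass to $(L^2)^2$ and apply the statement with strictly smaller data'' has no content: $(L^2)^2$ is an ideal of $L$, not an algebra of type $p$, and its lower central factors merely record the later constituent lengths (Proposition~\ref{prop:constituent_lengths}) without producing a smaller instance of the theorem; moreover the later constituents typically have length $q$ rather than $2q$, so there is no ``persistence of $\ell=2q$'' to establish, and membership in $\mathcal{E}$ is already excluded by the first constituent length alone. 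Without Proposition~\ref{lemma:Ugolini} your plan amounts to re-deriving enough of the classification of algebras of type $1$ to match whole sequences against it, which is a far harder task than the theorem itself. (A minor point: since $\ell=2q>2p$ we automatically have $\beta_{p+1}=0$, so no translation is needed.)
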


The exclusion of $q=p$ in Theorem~\ref{thm:length2q} is because algebras of type $p$ with $\ell=2p$
may also occur as (proper) translates of algebras of type $1$ with a longer first constituent (see Proposition~\ref{prop:translated_subalgebra}).

Our final result is an explicit construction of the exceptional algebras of the family $\mathcal{E}$ of Theorem~\ref{thm:classification}.
We summarize here the conclusion as an existence theorem, and refer to Section~\ref{sec:exceptional} for further structural information on them.

\begin{thm}\label{thm:exceptional}
Let $q$ be a power of an odd prime $p$, with $q>p$.
For every $0<m<p-1$ there is a soluble algebra $L$ of type $p$, over a field of odd characteristic $p$,
satisfying the following:
\begin{itemize}
\item
the length $\ell$ of the first constituent of $L$ equals $q+m$ if $m$ is odd, and $q+m+1$ if $m$ is even;
\item
all constituents past the first have length $q$,
except for the second constituent if $m$ is even, which has then length $q-1$.
\end{itemize}
\end{thm}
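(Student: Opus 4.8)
The plan is to construct each algebra $L$ explicitly from the bottom up, by prescribing its sequence $(\beta_i)_{i>p}$, and then to verify directly both that this sequence genuinely defines an algebra of type $p$ and that it has the stated constituent lengths. Fix the power $q$ of the odd prime $p$ and an integer $m$ with $0<m<p-1$, and set $\ell:=q+m$ if $m$ is odd and $\ell:=q+m+1$ if $m$ is even. We normalise the intended sequence so that $\beta_{p+1}=0$, and define the remaining $\beta_i$ by an explicit piecewise formula: on the initial block $p+1<i\le\ell$ corresponding to the first constituent the $\beta_i$ take a short prescribed list of values, the last nonzero one occurring in degree close to $\ell$ and playing a role analogous to the single relation underlying Equation~\eqref{eq:range}; while for $i>\ell$ the $\beta_i$ obey a rule of period $q$ — this is what forces all later constituents to have length $q$ — modified by one exceptional value when $m$ is even, which is what shortens the second constituent to $q-1$ in that case.

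The first and principal task is then to show that this prescription is \emph{consistent}, i.e.\ that it is the sequence of an actual algebra of type $p$. The cleanest route is to realise $L$ concretely, so that the Jacobi identity holds automatically: the natural candidate, in the spirit of the Albert--Frank--Shalev constructions, is the graded subalgebra generated by a homogeneous element of degree $1$ and a homogeneous element of degree $p$ inside a soluble graded Lie algebra built from a truncated (divided-power) polynomial ring modelled on $F[x]/(x^{q})$, with the action of the degree-$1$ generator implemented by a suitable derivation and with an abelian ideal of finite codimension carrying the rest of the structure. One then checks that this subalgebra satisfies $[L_i,L_1]=L_{i+1}$ for all $i\ge p$, that $\dim L_i=0$ for $1<i<p$ and $\dim L_i=1$ otherwise, and that its structure constants reproduce the sequence $(\beta_i)$ prescribed above; the last point reduces to a congruence among binomial coefficients modulo $p$, handled through the base-$p$ digit arithmetic of Lucas' theorem. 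Solubility of $L$ is immediate, since the ambient algebra, and hence $L$, is soluble by construction.

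It remains to read off the constituent lengths. Because $\beta_{p+1}=0$ has been arranged, Proposition~\ref{prop:constituent_lengths} applies and reduces everything to computing the layers of the lower central series of $L^2$ and of its derived subalgebra inside the concrete model. The codimension of $(L^2)^2$ in $L^2$ works out to $m$ when $m$ is odd and to $m+1$ when $m$ is even — this parity phenomenon, coming from whether a certain monomial of the model is or is not available, is exactly the source of the two cases in the formula for $\ell$ — so that $\ell=\dim\bigl(L^2/(L^2)^2\bigr)+p$ takes the claimed value. Each further layer $(L^2)^r/(L^2)^{r+1}$ with $r>1$ then has dimension $q$, with the single exception that for even $m$ the first such layer is short by one, delivering a second constituent of length $q-1$.

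The main obstacle is the consistency check, and more precisely the behaviour at the ``seam'' between the anomalous first (and, for even $m$, second) constituent and the periodic tail: one must confirm that the relations imposed near degree $\ell$ are compatible with the Jacobi identity and neither collapse the first constituent to a length below $\ell$ nor force an unwanted further anomaly higher up, so that the constituents really stabilise to length $q$. In the concrete realisation this amounts to proving that the subalgebra generated by the two chosen generators has first constituent exactly $\ell$ and all subsequent constituents exactly $q$ (resp.\ $q-1$ once), a verification resting on the characteristic-$p$ arithmetic of the structure constants and on the hypothesis $q>p$, which supplies the room needed for the periodic pattern to settle in. Once this is established, a comparison with Theorem~\ref{thm:length2q} and Proposition~\ref{prop:translated_subalgebra} confirms that these algebras are neither graded subalgebras of uncovered algebras of type $1$ nor translates of such, so that they are the sought exceptional members of the family $\mathcal{E}$.
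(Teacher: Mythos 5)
Your overall strategy coincides with the paper's: realise $L$ as the subalgebra generated by a degree-$1$ element and a degree-$p$ element inside a soluble graded algebra whose core is a divided power ring viewed as an abelian ideal, with the degree-$1$ generator acting through a derivation, and read the constituent lengths off binomial coefficients via Lucas' theorem (the parity dichotomy in $\ell$ does indeed come from whether $\binom{p-1}{m}-1\equiv(-1)^m-1$ vanishes). So the route is the right one. But as written the proposal is a plan rather than a proof: for an existence theorem the entire content is the explicit construction, and you never write down the generators, so the ``consistency check'' that you correctly single out as the main obstacle is left entirely unresolved rather than discharged.

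Two concrete ingredients are missing. First, a ring modelled on $F[x]/(x^{q})$ is finite-dimensional over $F$, so any algebra built on it as you describe would be nilpotent, not an infinite-dimensional algebra of maximal class. The paper works over $K=F(t)$ and takes $Z=-\partial-tx^{(q-1)}I$, so that $Z$ sends $x^{(0)}=1$ to $-tx^{(q-1)}$ instead of to $0$; it is this wrap-around, raising the power of $t$ once every $q$ steps, that makes $L$ infinite-dimensional and produces the periodic tail of ordinary constituents of length $q$ (with the single shortened one when $m$ is even). Second, the other generator must be taken as $e_p=\bigl(x^{(q+m-p)},\,tx^{(q-p)}I\bigr)$, with a nonzero component in the abelian ideal: the parameter $m$ enters only through that component, and the coefficient $\binom{j}{m}-1$ governing $\ell$ arises precisely from the interplay of the two components in $[e_{q+m-j},e_p]$. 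Without these choices there is nothing to verify; with them the verification is a short direct computation, and one does not even need Proposition~\ref{prop:constituent_lengths} --- the constituents are read off directly from the products $[e_j,e_p]$.
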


Note that the algebras of Theorem~\ref{thm:exceptional} attain all even values of $\ell$ in the interval
$q<\ell\le q+p$ of~Theorem~\ref{thm:first_length} with the only exception of $q+p$.
This last value of $\ell$ occurs for certain translates of subalgebras of type $p$ of an algebra of type $1$,
as we show in Proposition~\ref{prop:translated_subalgebra}.

Theorem~\ref{thm:first_length} shows that a proof of Theorem~\ref{thm:classification} can be divided into three parts according to the value of $\ell$.
Under hypotheses and notation as in Theorem~\ref{thm:classification} those parts are proofs of the following assertions:
\begin{itemize}
\item
if $\ell=2q$ then $L$ is a graded subalgebra of an algebra of type $1$;
\item
if $\ell=q+p$ then a suitable translate of $L$ is a graded subalgebra of an algebra of type $1$;
\item
if $q<\ell<q+p$ then $L$ is isomorphic to one of the exceptional algebras in the family $\mathcal{E}$ described in Theorem~\ref{thm:exceptional}.
\end{itemize}
Note that the hypothesis $\ell>4p$ of Theorem~\ref{thm:classification} ensures $q>p$.
The first of those three assertion is a consequence of Theorem~\ref{thm:length2q}.
Proofs of the remaining two assertions will be presented in a separate paper.

We now outline the structure of the paper.
In Section~\ref{sec:sequence} we introduce gradings of Lie algebras of maximal class,
defining in particular algebras of type $1$, and of type $n>1$.
Note that a Lie algebra of maximal class which is graded over the positive integers
and generated by an element of degree $1$ and one of degree $n$,
need not be an algebra of type $n$, as we point out in Remark~\ref{rem:other_types}.
We also define {\em the sequence} of an algebra $L$ of type $n>1$,
which is the analogue of the {\em sequence of two-step centralizers} for algebras of type $1$.

The sequence of $L$ can be partitioned into (finite) adjacent blocks called {\em constituents,}
which we introduce in Section~\ref{sec:constituents} extending the corresponding definition for algebras of type $1$.
The definition is such that the constituents of an algebra $L$ of type $n$ which is a graded subalgebra of an algebra $N$ of type $1$
naturally correspond to the constituents of $N$, with matching lengths,
provided that the first constituent of $N$ has length at least $2n$.
Only the last $n$ entries of any constituent of an algebra $L$ of type $n$ can be nonzero,
and if $L$ occurs as a graded subalgebra of an algebra of type $1$ then those last $n$ entries take a particular form given by Equation~\eqref{eq:ordinary_from_type_1}.
We prove the converse implication in Proposition~\ref{lemma:Ugolini}, extending a result in~\cite{Ugo:thesis}.
We are grateful to Simone Ugolini for permission to include that result, albeit in a strengthened form.

Constituents, and their lengths, were fundamental in the theory of algebras of type $1$ in~\cite{CMN,CMNS,Ju:maximal}, and so they remain for type $n$.
In Section~\ref{sec:const_lengths} we establish some general facts on the constituent lengths $\ell=\ell_1,\ell_2,\ell_3,\ldots$ of an algebra $L$ of type $n$.
Assuming $L$ not metabelian, we show that $\ell$ is even, and that all constituent lengths satisfy $\ell/2\le\ell_r\le\ell$.
This information will later be used in our proofs of Theorems~\ref{thm:first_length} and~\ref{thm:length2q}, which concern algebras of type $p$, the characteristic,
but requires no more effort to prove for algebras of arbitrary type $n$.
In Section~\ref{sec:characterization} we show how the constituent lengths of an algebra $L$ of type $n$ can be computed from relative codimensions
in the sequence of Lie powers of $L^2$,
under a certain assumption already discussed in this Introduction before stating Theorem~\ref{thm:first_length}.

In Section~\ref{sec:type_p} we move closer to the main goals of this paper by restricting our attention to algebras of type $p$.
We introduce translates $L(\delta)$ of an algebra $L$ of type $p$ and we discuss their constituent lengths.
We also sketch a reviewed proof from~\cite{Mat:chain_lengths} of a basic fact known since~\cite{CMN},
that the first constituent length of algebras of type $1$ has length of the form $2q$.
That reviewed approach informs our strategy of proof of Theorem~\ref{thm:first_length} through a polynomial formulation.

Section~\ref{sec:polynomials} contains a key step in our proof of Theorem~\ref{thm:first_length},
but is entirely concerned with answering a polynomial question which may well be of independent interest.

Section~\ref{sec:first_length_proof} contains a proof of Theorem~\ref{thm:first_length}.
The main part of the argument reduces the problem to an application of the polynomial results of Section~\ref{sec:polynomials},
restricting the possibilities for $\ell$ to those listed in Theorem~\ref{thm:first_length}, plus a few spurious ones,
which are then ruled out by further Lie algebra calculations.

Section~\ref{sec:length2q} contains a proof of Theorem~\ref{thm:length2q}.

Finally, in Section~\ref{sec:exceptional} we construct the algebras in the exceptional family $\mathcal{E}$ of Theorem~\ref{thm:classification},
thereby proving Theorem~\ref{thm:exceptional}.
The construction and structural analysis of those exceptional algebras ran over ten pages in~\cite[Chapter~5]{Sca:thesis},
and was based on a matricial construction which extended one in~\cite[Section~8]{CVL00}.
We provide a new construction based on derivations of a ring of divided powers, which allows a much more compact treatment
in a couple of pages of Section~\ref{sec:exceptional}.

\section{Graded Lie algebras of maximal class}\label{sec:sequence}

A finite-dimensional Lie algebra $L$, with $\dim L>1$, is said to be {\em of maximal class} if it is nilpotent of nilpotency class
as large as it can be compatibly with its dimension, namely, precisely one less than that.
This means that $L/L^2$ is two-dimensional, and $L^i/L^{i+1}$ is one-dimensional for $1<i<\dim L$.
Thus, $\dim(L/L^i)=i$ for $1<i<\dim L$.
Here $L^i$ denotes the $i$th Lie power of $L$, which means the same as the $i$th term of the lower central series of $L$.
Lie algebras of maximal class are also called {\em alg\'{e}bres filiformes} in the literature.
By extension, a residually nilpotent, infinite-dimensional Lie algebra $L$ is said to be {\em of maximal class} if $\dim(L/L^i)=i$ for each $i>1$.
Carrying over standard terminology for pro-$p$ groups, they are precisely the Lie algebras {\em of coclass one}.
Clearly any Lie algebra of maximal class can be generated by two elements.
We will only be interested in infinite-dimensional Lie algebras in this paper.

Now let $L$ be an infinite-dimensional Lie algebra of maximal class which is graded over the positive integers, that is,
$L=\bigoplus_{i=1}^{\infty}L_i$ and $[L_i,L_j]\subseteq L_{i+j}$ for all $i,j$.
We will use the notation $L_{(i)}=\bigoplus_{j\ge i}L_j$ for the elements of the {\em filtration} of $L$ associated to the grading.
If $\dim L_1=2$ then we say that such a graded Lie algebra of maximal class $L$ is an {\em algebra of type $1$}.
Here {\em type $1$} refers to the fact that, necessarily, such $L$ is generated by $L_1$, because $L_1\cap L^2=\{0\}$ and hence $L_1+L^2=L$.
Furthermore, because
$L^i\subseteq L_{(i)}$ for $i>1$,
and $L^i$ has codimension $i$, we have
$\sum_{j=1}^{i-1}\dim L_j\le i$.
Combined with the fact that $L_1$ generates $L$, this inductively implies $\dim L_i=1$ for all $i>1$.

Algebras of type $1$ arise naturally as the graded Lie algebras associated to the lower central series of an arbitrary Lie algebra of maximal class $M$,
that is, taking $L=\bigoplus_{i=1}^{\infty}M^i/M^{i+1}$ with respect to the Lie product defined in a natural way.
It is easy to see that over a field of characteristic zero all algebras of type $1$ are isomorphic and have an abelian ideal of codimension one.
Shalev showed in~\cite{Sha:max} that this is far from the case in positive characteristic, and such algebras need not even be soluble.
A systematic investigation of algebras of type $1$ started in~\cite{CMN}.
In particular, constructions of new algebras of type $1$ from a given one were described,
which when done repeatedly together with some limiting processes produce,
over an arbitrary field $F$ of positive characteristic, uncountably many pairwise non-isomorphic algebras of type $1$.
A classification of algebras of type $1$ was achieved in~\cite{CN} for $p$ odd, and later~\cite{Ju:maximal} for $p=2$,
in the sense of proving that any algebra of type 1 can be obtained through the procedures described in~\cite{CMN}.
Algebras of type 1 were simply referred to as {\em graded Lie algebras of maximal class} in~\cite{CMN,CN,Ju:maximal}.

An algebra of type $1$ may, equivalently, be defined as a graded Lie algebra $L=\bigoplus_{i=1}^{\infty}L_i$
with $\dim L_1=2$, $\dim L_i=1$ for $i>1$, and $[L_i,L_1]=L_{i+1}$ for all $i$.
This leads us to define an {\em algebra of type $n$,} for $n>1$, as a graded Lie algebra $L=\bigoplus_{i=1}^{\infty}L_i$
with $\dim L_i=0$ for $1<i<n$ and $\dim L_i=1$ otherwise,
such that $[L_i,L_1]=L_{i+1}$ for all $i\ge n$.
This appears to be the most practical definition (at least in the infinite-dimensional case which we consider in this paper),
and clearly implies that $L$ is of maximal class.

We may also define an algebra of type $n$
as a graded Lie algebra of maximal class $L=\bigoplus_{i=1}^{\infty}L_i$
satisfying $\dim L_1=1$, $\dim L_i=1$ for $i\ge n$, and generated by $L_1$ and $L_n$
(whence $\dim L_i=0$ for $1<i<n$).
In fact, because
$L^i\subseteq L_{(i+n-1)}$ for $i>1$,
the maximal class assumption forces equality, whence
$[L_i,L_1]=L_{i+1}$ for all $i\ge n$.
(This extends an observation in~\cite[p.~271]{CVL00} for $n=2$.)
This argument breaks down if we only assume $\dim L_i\le 1$ for $i\ge n$ instead of $\dim L_i=1$,
see Remark~\ref{rem:other_types} for an abundance of counterexamples.

Before continuing our discussion of algebras of type $n$ we pause to introduce some notation
and a useful general tool.
Long Lie products are to be interpreted using the left-normed convention, meaning that $[x,y,z]$ stands for $[[x,y],z]$.
A convenient shorthand is $[x, z^i]$,
where $[x,z^0]=x$ and $[x,z^i]=[x,z^{i-1},z]$ for $i>0$.
We will make frequent use of the {\em generalized Jacobi identity}
\[
[x,[y,z^i]]=\sum_{j=0}^{i}(-1)^j\binom{i}{j}[x,z^j,y,z^{i-j}].
\]

In the rest of this section we recall the sequence of two-step centralizers for algebras of type $1$,
and then suitably extend it to algebras of type $n$.
The terminology of two-step centralizers goes back to early work on $p$-groups of maximal class
started by Norman Blackburn in 1958, see~\cite[Kapitel~III, \S14]{Hup}.
Correspondingly, two-step centralizers for algebras of type $1$ were defined in~\cite{CMN} as the centralizers $C_{L}(L^i/L^{i+2})$.
Because $C_L(L^i/L^{i+2})=C_{L_1}(L_i)+L^2$ they are determined by the one-dimensional subspaces $C_i=C_{L_1}(L_i)$ of $L_1$ for $i>1$,
but the `two-step' qualifier was maintained in all later work on graded Lie algebras of maximal class.
The two-step centralizers can also be described by points on a projective line over $F$ or, depending on a choice of two (homogeneous) generators for $L$,
by elements of the field $F$ plus $\infty$.
After making a choice of generators $y,z\in L_1$ for $L$ with $C_2=F y$, which means $[y,z,y]=0$, the two-step centralizers of $L$ are described by a sequence
$(\alpha_i)_{i>1}$ of elements of $F\cup\{\infty\}$ by $C_i=F\cdot(y-\alpha_iz)$
(interpreting $F\cdot(y-\infty\cdot z)$ as $F z$).
We call that {\em the sequence} of $L$, with the definite article despite the fact that it depends on the choice of generators.

A crucial consequence of our special choice of the generator $y$ is
that at least one of each pair of consecutive entries of the sequence vanishes~\cite[Lemma~3.3]{CMN}.
In other words, each nonzero entry $\alpha_i$ of the sequence is immediately followed by a zero entry.
This is equivalent to $[L_i,y,y]=0$ for all $i\ge 1$.
When $i$ equals $1$ or $2$ this holds because $[L_2,y]=0$
by definition of $y$.
Proceeding inductively,
suppose $[L_i,y]\neq 0$ for some $i>2$,
and assume $[L_{i-1},y]=0$ as we may.
Then letting $u$
span $L_{i-1}$ we have that $[u,z]$ spans $L_i$, and
\begin{equation*}
0=[u,[z,y,y]]=[u,z,y,y]-2[u,y,z,y]+[u,y,y,z]=[u,z,y,y]
\end{equation*}
yields $[L_i,y,y]=0$ as desired.
Incidentally, in characteristic different from two
the condition $[L,y,y]=0$ means that
$y$ is a {\em sandwich element} of $L$.
Some implications of this for algebras of type $1$ and for
{\em thin Lie algebras} as well are discussed in~\cite{Mat:sandwich}.
For the present discussion it means that the sequence of $L$ consists of isolated nonzero entries, separated by blocks of one or more zeroes.
Of course the sequence is not uniquely determined by $L$ but still depends on the choice of the other generator $z$.
It follows from~\cite{CMN} that over any countable field $F$ of positive characteristic there exist algebras of type $1$
such that $\bigcup_{i>1}C_i=L_1$, meaning that every element of $F\cup\{\infty\}$ appears somewhere in the sequence of two-step centralizers.

Following~\cite{CMN} we say that an algebra $L$ of type $1$ is {\em uncovered} if $\bigcup_{i>1}C_i\not=L_1$.
Then choosing $z\in L_1 \setminus \bigcup_{i>1}C_i$ we have $L_i=[L_{i-1},z]$ for all $i \ge 1$.
As in~\cite[Section~3]{ShZe:narrow-Witt}, such an element $z$ will be called {\em uniform}.
Being uncovered is clearly not a strong restriction as it can always be achieved by extending the field $F$ of definition
(and is automatically satisfied if $F$ is uncountable).
With such a choice of $z$, the sequence of $L$ is made of elements of $F$ and does not involve the symbol $\infty$.
For uniformity with algebras of type $n>1$ considered below,
it will be convenient to rename the generator $y$ as $e_1$, and recursively define a basis of $L$ by setting $e_i=[e_{i-1},z]$ for all $i>1$.
With this notation the corresponding sequence $(\alpha_i)_{i>1}$
may also be defined by the equation $[e_i,e_1]=\alpha_{i}e_{i+1}$ for all $i>1$.

A definition of the sequence of an algebra of type $n$, which we introduce now,
will be naturally modelled on that for an uncovered algebra of type $1$.
Thus, consider an algebra $L$ of type $n>1$, generated by $z$ of degree $1$ and $e_n$ of degree $n$.
We recursively set $e_i=[e_{i-1},z]$ for $i>n$.
Hence $L_i=F e_i$ except for $L_1=F z$, and of course $L_i=0$ for $1<i<n$ by definition of type $n>1$.
We define the sequence $(\beta_i)_{i>n}$ of $L$ by
$[e_i, e_n]=\beta_i e_{i+n}$.
This includes the case of uncovered algebras of type $1$ as a special case, where of course $L_1=F z+F e_1$.
This sequence of elements of $F$ encodes the adjoint action of $e_n$, while the action of $z$ is actually used to define the chosen basis elements $e_i$.
It follows that the Lie algebra $L$ is completely determined by this sequence, because the adjoint action of the generators
determines the adjoint action of every element of the Lie algebra.
Note that the equation used to define $\beta_i$ would naturally extend to $\beta_n=0$,
but for various reasons (one of which relates to {\em translation,} see a comment after the proof of Proposition~\ref{prop:translation})
it is best to leave $\beta_n$ undefined and let the sequence start with $\beta_{n+1}$.

The sequence $(\beta_i)_{i>n}$ is the zero sequence for the algebra of type $n$ having multiplication
$[e_i,e_n]=0$ for all $i>n$ (and $e_i:=[e_n,z^{i-n}]$ as postulated above), which has an abelian maximal ideal,
spanned by $e_1$ and $L^2=L_{(n+1)}$.
When $n=1$ this is, up to isomorphism,
the only metabelian algebra of type $n$,
but when $n>1$ there is another one.
In fact, if $L$ is metabelian of type $n$
then because $[e_n,z]=e_{n+1}$
belongs to the abelian ideal $L^2$ we find
\[
0=[e_i,[e_n,z]]=[e_i,e_n,z]-[e_i,z,e_n]
=(\beta_i-\beta_{i+1})e_{i+n+1}
\]
for $i>n$, and hence the sequence $(\beta_i)_{i>n}$ is constant.
If this constant $\beta$ is not zero then after renaming
$(1/\beta)e_n$ as $e_n$ we find
$[e_i,e_n]=e_{i+n}$ for $i>n$.
This specification, together with $[e_i,e_j]=0$ for $i,j>n+1$,
clearly defines a Lie algebra.
When $n>1$ the maximal abelian ideal of this algebra $L$
is $L^2=L_{(n+1)}$, of codimension two,
and hence this algebra is not isomorphic to the one considered
above where $(\beta_i)_{i>n}$ was the zero sequence.
However, when $n=1$ this algebra is isomorphic to the previous
one, as we see by renaming $e_1-z$ as $e_1$.

Aside from the algebra of type $n$ with an abelian maximal ideal, the definition of the sequence of $L$ clearly depends on the choice of the generators $z$ and $e_n$.
For algebras of type $n>1$ those generators (if assumed homogeneous of degrees $1$ and $n$) are unique up to scalar multiples.
If $z'=\lambda z$ and $e_n'=\mu e_n$ are another pair of generators, for some $\lambda, \mu \in F^*$, then for all $i>n$ we inductively find
$e_i'=[e_{i-1}',z']=\lambda^{i-n}\mu e_i$, and hence
\[
[e_i',e_n']=\lambda^{i-n} \mu^2[e_i,e_n]=\mu/\lambda^n \beta_i e_{i+n}',
\]
which means $\beta_i'=\mu/\lambda^{n} \beta_i$.
Thus, multiplying the sequence of an algebra of type $n$ by a non-zero scalar
does not affect the isomorphism type of the algebra, and this is the only change to the sequence which can be obtained through a change of generators.
It will be convenient to work, without further mention, with a \emph{normalized sequence} whose first nonzero entry equals $1$.
Therefore, two algebras of type $n>1$ are isomorphic if and only if their normalized sequences coincide.

For comparison, algebras of type $1$ allow greater variation in choosing the generating elements $e_1$ and $z$,
as those have the same degree, with the only constraints that
$[e_2,e_1]=0$ (having chosen $e_1$ to span the first two-step centralizer $C_2$, as we may),
and that $z$ is uniform, thus $e_i=[e_1,z^i]$ spans $L_i$ for $i>1$.
In particular, any other pair of generators satisfying those constraints has the form
$z'=\lambda z+\nu e_1$ and $e_1'=\mu e_1$, for certain $\lambda, \mu \in F^*$ and $\nu\in F$.
Then $e_{i+1}'=(\lambda+\nu\alpha_i)e_i'$ for $i>1$, and hence $\alpha_i'=\mu\alpha_i/(\lambda+\nu\alpha_i)$ for $i\ge 1$.
A natural normalization of the sequence $(\alpha_i)_{i>1}$ for algebras of type $1$
is assuming that, unless $L$ is metabelian, its {\em first constituent} (as defined in the next section) ends with a $1$.
This means that the second two-step centralizer $C_{\ell}$ is spanned by $x=z-e_1$, where $y=e_1$ spans the first by earlier assumption.
Such generators $x$ and $y$ of an algebra of type $1$ were the standard choice of generators adopted in~\cite{CMN,CN,Ju:maximal}.
As described in~\cite[p.~4025]{CMN}, if $L$ has a third two-step centralizer (which corresponds to the earliest occurrence of an entry different from $0$ and $1$
in the sequence $(\alpha_i)_{i>1}$), then one may assume that to take a third pre-assigned value
and thus make the sequence $(\alpha_i)_{i>1}$ uniquely determined by the algebra.

\begin{rem}\label{rem:other_types}
Consider an algebra $L$ of type 1 having only two distinct two-step centralizers,
namely $C_2=Fy$ and $C_{\ell}=Fx$ according to the naming conventions recalled above.
Here $\ell$ is the {\em length of the first constituent,} which we discuss in the next section,
and is known from~\cite{CMN,Ju:maximal} to equal twice a power of the characteristic for algebras of type $1$.
Then $L$ actually admits a {\em double grading,} meaning over $\Z^2$, where the generators $x$ and $y$ are assigned degrees $(1,0)$ and $(0,1)$, respectively.
According to the classification in~\cite{CMN} (and~\cite{Ju:maximal} in characteristic two) the class of such algebras exhibits almost the same complexity
as without this restriction on the two-step centralizers, and they similarly form $|F|^{\aleph_0}$ isomorphism classes.
By assigning new degrees $m$ and $n$ to $x$ and $y$, where $m$ and $n$ are arbitrary integers, we get a new $\Z$-grading of $L$
where the component of $\Z^2$-degree $(i,j)$ gets degree $im+jn$.
If $0<m<n$ then all homogeneous components of this new $\Z$-grading have dimension at most one.
In particular, when $m=1$ we obtain a huge variety of graded Lie algebras of maximal class generated by an element of degree 1 and one of degree $n$,
which, however, are not algebras of type $n$ according to our definition, because the component of degree $n+\ell$ is zero.
\end{rem}

We conclude this section with a brief discussion of algebras of type $2$ in characteristic zero, with attention to their sequences as introduced
in this section.
According to~\cite[Theorem~7.1]{ShZe:narrow-Witt}, over a field $F$ of characteristic zero there are exactly three algebras of type $2$ up to isomorphism.
Two of them are metabelian, with sequences given by $\beta_i=0$ for $i>1$, and $\beta_i=1$ for $i>1$, respectively.
(We have discussed the two metabelian algebras of arbitrary type $n>1$ earlier in this section.)
The third algebra, denoted by $W$, is insoluble, and is the {\em positive} part of the Witt algebra $\Der(F[X])$.
Hence $W$ has a graded basis given by
$E_i=x^{i+1}\partial/\partial x$
for $i>0$, with Lie product
$[E_i,E_j]=(j-i)E_{i+j}$.
To relate to the notation employed in this section, setting $z=-E_1$ and $e_2=-E_2$ we find
$e_i:=[e_2,z^{i-2}]=-(i-2)!\,E_i$
for $i\ge 2$, and hence
\[
[e_i,e_2]
=(i-2)!\,[E_i,E_2]
=(i-2)!\,(2-i)\,E_{i+2}
=\frac{i-2}{i(i-1)}e_{i+2}.
\]
If we insist on normalizing the sequence with $\beta_3=1$, then we may replace $e_2'=-6E_2$, so that
$W$ has sequence $(\beta_i)_{i>2}$ given by $\beta_i=6(i-2)/(i^2-i)$,
which starts as $1,1,9/10,\ldots$ (see also~\cite[Section~7]{ShZe:narrow-Witt}).

Note that if we read the Lie product in $W$, with respect to its basis given by the elements $E_i$, modulo a prime $p$,
we do not get a Lie algebra of maximal class.
Thus, this characteristic $p$ version of $W$, say over the prime field $\F_p$, hence the positive part of $\Der(\F_p[X])$,
has no place in the classification of algebras of type $2$ in~\cite{CVL00,CVL03}.

\section{Constituents of algebras of type $n$}\label{sec:constituents}

Consider an uncovered algebra $L$ of type $1$.
If $L$ is metabelian then $L$ has the zero sequence.
Otherwise, we saw in the previous section that the sequence of $L$ consists of isolated occurrences of nonzero scalars $\alpha_j$,
separated by strings of one or more zeroes.
This suggests a natural partition of the sequence into juxtaposed blocks, each made of zeroes except for a final nonzero entry.
Those blocks (or finite subsequences) are called the {\em constituents} of $L$,
and will be numbered in order of occurrence (the {\em first} constituent, the {\em second,} etc.).
The {\em length} of a constituent is simply the number of its entries, except for the first constituent where that number needs to be increased by one
(due to having defined the two-step centralizers only starting from $C_2$).
This definition of constituents can be suitably rephrased to apply to algebras of type $1$ which are not uncovered,
and as such it was given in~\cite{CN} (slightly adjusted from the original one in~\cite{CMN}).

Constituents and their lengths were crucial in the investigation of algebras of type $1$ started in~\cite{CMN},
and in their subsequent classification in~\cite{CN,Ju:maximal}.
In particular, it was shown in~\cite{CMN} that the first constituent of a non-metabelian algebra of type $1$ has length $2q$ for some (positive) power $q$ of $p$,
called the {\em parameter} of the algebra.
It was also shown that the possible lengths of other constituents are either $2q$ or $2q-p^\beta$,
with $p^\beta\ge 1$ a power of $p$ not exceeding the parameter $q$.
Some of those results were revisited in~\cite{Mat:chain_lengths} with a different approach, which has inspired our polynomial method of Section~\ref{sec:polynomials}.
In particular, if the first constituent of $L$
has length $\ell=2q$, then
the length $\ell_r$ of each constituent
satisfies $\ell/2 \le \ell_r\le \ell$.
In Lemmas~\ref{lemma:upper_bound} and~\ref{lemma:constituent_bound}
we will extend this fact to constituents of
algebras of type $n$, as defined below.

There have been inconsistencies in the definition of constituents for algebras of type $n>1$.
Our definition will extend the one given in~\cite{CVL03} for algebras of type $2$.
In order to motivate the definition we first describe a natural way to obtain an algebra of type $n$ from any uncovered algebra of type $1$.

Let $N=\bigoplus_{i \ge 1} N_i$ be an uncovered algebra of type $1$.
Fix $n>1$ and consider the (graded) subalgebra $L$ of $N$  generated by $z$ and $e_n$.
Hence $L=L_1\oplus \bigoplus_{i \ge n} L_i$ is an algebra of type $n$,
where $L_1=F z$, and $L_i=N_i$ for $i\ge n$.
The (right) adjoint action of $e_n$ on $L$ is described by
\begin{align*}
[e_i,e_n]
&=[e_i,[y,z^{n-1}]]
=\sum_{k=0}^{n-1} (-1)^k \binom{n-1}{k}[e_{i+k},y,z^{n-1-k}]\\
&=\sum_{k=0}^{n-1} (-1)^k \binom{n-1}{k}\alpha_{i+k} e_{i+n}.
\end{align*}
Hence the sequence $(\beta_i)_{i>n}$ of $L$ is given by
$\beta_i=\sum_{k=0}^{n-1} (-1)^k \binom{n-1}{k}\alpha_{i+k}$.
Note that different choices of $z$ may lead to non-isomorphic subalgebras $L$.

As recalled earlier the sequence of $N$ consists of isolated occurrences of nonzero scalars $\alpha_j$,
separated by (finite) strings of one or more zeroes.
If a nonzero $\alpha_j$ is preceded by at least $n-1$ zeroes and followed by at least $n-1$ zeroes, then
$\beta_i=(-1)^{j-i}\binom{n-1}{j-i}\alpha_j$ for $j-n<i\le j$.
Letting $J=\{j:\alpha_j\neq 0\}$, the sequence of $N$ then consists of adjacent blocks
$(0,\ldots,0,\alpha_j)$,
which by definition are the constituents of $N$.

Now assume every constituent of $N$
has length at least $n$.
Then we have just discovered that
the sequence of $L$ consists of adjacent blocks
$(\beta_{j-m+1},\ldots,\beta_{j})$
of the form
\begin{equation}\label{eq:ordinary_from_type_1}
\beta_{j-i}
=
(-1)^i\binom{n-1}{i}\beta_j
\qquad\text{for $0\le i<m$.}
\end{equation}
where $m$ is the length of the constituent of $N$ ending in $\alpha_j$,
or $n$ less than that length in case of the first constituent.
Note that the first nonzero entry of this block is $\beta_{j-n+1}=(-1)^{n-1}\alpha_j$.
It seems natural to call those blocks $(\beta_{j-m+1},\ldots,\beta_{j})$ the {\em constituents} of $L$.
Their lengths coincide with the lengths of the constituents of the overalgebra $N$ of type $1$,
if we stipulate that the length of the first constituent of $N$ (which starts with $\beta_{n+1}$) should,
by definition, exceed by $n$ its actual length as a sequence.

Note that if $N$ of type $1$
has first constituent of length $\ell=2q$,
then as we recalled above from~\cite{CMN}
each other constituent has length
at least $\ell/2$, and so a subalgebra $N$ of type $n$
constructed as above will have  constituents
of the form given by Equation~\eqref{eq:ordinary_from_type_1}
as long as $\ell\ge 2n$.
However, in absence of this condition some
$\beta_i=\sum_{k=0}^{n-1} (-1)^k \binom{n-1}{k}\alpha_{i+k}$
may depend on two or more nonzero entries $\alpha_j$
of the sequence of $L$.

For an arbitrary algebra $L$ of type $n>1$, we will also divide its sequence $(\beta_i)_{i>n}$
into adjacent blocks, and call those its {\em constituents}, but the entries of a constituent need not generally follow the special pattern described above.
Assume $(\beta_i)_{i>n}$ is not the zero sequence.
Then we define the {\em first constituent} of $L$ as
$(\beta_{n+1},\ldots,\beta_{\ell})$,
where $\beta_{\ell-n+1}$ is the first nonzero entry of the sequence $(\beta_i)_{i>n}$.
Thus, $\ell$ is determined by $[e_{\ell-n+1},e_n]\neq 0$ but $[e_i, e_n]=0$ for $n\le i\le\ell-n$.
We stipulate that the first constituent has length $\ell$ (despite actually having only $\ell-n$ entries as a sequence).

Note that our definition implies $\ell\ge 2n$.
In particular, if $L$ is obtained as
a subalgebra of type $n$ of an algebra $N$ of type $1$
according to the construction given earlier,
and $N$ has first constituent shorter than $2n$,
then $L$ and $N$ will have
{\em different} first constituent lengths.

We define further constituents of
an arbitrary algebra $L$ of type $n>1$ recursively.
If $(\beta_i,\ldots,\beta_j)$ is a constituent which we have already defined,
and $\beta_{j+m-n+1}$ is the first nonzero entry of the sequence past $\beta_j$
(which exists according to Lemma~\ref{lemma:upper_bound} below), then
we define $(\beta_{j+1},\ldots,\beta_{j+m})$ to be the next constituent, of length $m$.
Thus, the sequence of $L$ gets partitioned into a sequence of adjacent constituents.
Note that, by definition, every constituent has length at least $n$, and the first constituent has length at least $2n$.
We denote the constituent lengths by $\ell=\ell_1,\ell_2,\ell_3,\ldots$, numbered in order of occurrence.

We have seen above that if an algebra $L$ of type $n$
is a subalgebra of an algebra $N$ of type $1$,
whose first constituent has length at least $2n$,
then the constituents of $L$ have the special form
described by Equation~\eqref{eq:ordinary_from_type_1}.
The next result shows that the converse holds.
It is a stronger version of a result
of Ugolini~\cite[Lemma~3.1.2]{Ugo:thesis}, which in turn extended the special case $n=2$ proved in~\cite[Section~2]{CVL00}
(also stated as~\cite[Proposition~2.2]{CVL03}).
Ugolini's version required an assumption that the constituents are not too short, see Remark~\ref{rem:Ugolini} below for details.

\begin{prop}[Ugolini]\label{lemma:Ugolini}
Let $L$ be an algebra of type $n$ and
suppose that each constituent of $L$ has the special form
$(\lambda_{j-m+1},\ldots,\lambda_j)$, with
\[
\lambda_{j-i}
=
(-1)^i\binom{n-1}{i}\lambda_j
\qquad\text{for $0\le i<m$,}
\]
for some integer $m\ge n$.
Then $L$ is a subalgebra of an uncovered algebra of type $1$.
\end{prop}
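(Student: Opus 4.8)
The plan is to realize $L$ as the graded subalgebra generated by $z$ and by $[y,z^{n-1}]$ inside an uncovered algebra $N$ of type~$1$, where $y$ is a second homogeneous generator of $N$ of degree~$1$. Since the notion of constituent presupposes that $L$ is not metabelian, we assume this; the metabelian algebra of type~$n$ with an abelian maximal ideal, which has zero sequence, is in any case the subalgebra generated by $z$ and $[e_1,z^{n-1}]$ of the (uncovered) metabelian algebra of type~$1$. Write $j_1<j_2<\cdots$ for the positions at which the successive constituents of $L$ end, so that $\lambda_r:=\beta_{j_r}\neq 0$; recall that $j_1\ge 2n$ and that $j_r-j_{r-1}\ge n$ for $r\ge 2$.

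If $N$ is as desired and $(\alpha_i)_{i>1}$ denotes its sequence of two-step centralizers relative to $z$ and $y$, then the computation carried out before Equation~\eqref{eq:ordinary_from_type_1} forces $\beta_i=\sum_{k=0}^{n-1}(-1)^k\binom{n-1}{k}\alpha_{i+k}$ for all $i>n$, and hence the choice $\alpha_{j_r}:=\lambda_r$ for each $r$ and $\alpha_i:=0$ otherwise. The first step is to check the converse: with this sparse sequence the displayed identity holds for every $i>n$. Since consecutive $j_r$ differ by at least $n$, the window of indices $i,\dots,i+n-1$ contains at most one of them. If it contains none, then $\alpha_i=\cdots=\alpha_{i+n-1}=0$, and also $\beta_i=0$ because by hypothesis the nonzero entries of any constituent lie in its last $n$ positions. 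If it contains exactly one, say $j_r=i+k$, then the right-hand side is $(-1)^k\binom{n-1}{k}\lambda_r$, which by the special form of the constituent ending in $j_r$ equals $\beta_i$.

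It remains to realize this $(\alpha_i)$ by an uncovered algebra of type~$1$ containing $L$. Let $N$ be the graded vector space with basis $y,z$ in degree~$1$ and $e_i$ in degree~$i$ for $i\ge 2$, agreeing with $L$ in degrees $\ge n$, equipped with the anticommutative bracket whose restriction to the span of $z$ and the $e_i$ with $i\ge n$ is the bracket of $L$, and which further satisfies $[y,z]=e_2$, $[e_i,z]=e_{i+1}$ and $[e_i,y]=\alpha_ie_{i+1}$ for all $i\ge 2$; equivalently, so that $e_j=[y,z^{j-1}]$ and $[e_i,e_j]=\bigl(\sum_{k=0}^{j-1}(-1)^k\binom{j-1}{k}\alpha_{i+k}\bigr)e_{i+j}$ for all $i,j\ge 2$. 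A Vandermonde summation, combined with the identity of the previous paragraph, shows that in degrees $\ge n$ this last prescription agrees with the product of $L$, so that the definition is consistent. The crux is then to verify the Jacobi identity on $N$: evaluated on basis triples it reduces to a finite family of polynomial relations among the $\alpha_i$ — precisely the consistency relations satisfied by the sequences of two-step centralizers of algebras of type~$1$ — and one deduces these from the relations already holding in $L$ (a Lie algebra), the special form of its constituents, and standard binomial identities, also extracting along the way such structural facts as the parity of $\ell$. This is the step I expect to be the main obstacle; it amounts to showing that the forced sequence $(\alpha_i)$ is admissible, and it is exactly here that the precise binomial coefficients in the hypothesis are indispensable.

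Granted the Jacobi identity, $N$ is a Lie algebra with $\dim N_1=2$ and $\dim N_i=1$ for $i>1$, generated by $N_1$, hence an algebra of type~$1$; and $z$ is a uniform element of $N$, so $N$ is uncovered. The graded subalgebra of $N$ generated by $z$ and $e_n=[y,z^{n-1}]$ then coincides degreewise with $L$, its sequence of two-step centralizers being $(\beta_i)$ by the first step; this is the desired conclusion. As a byproduct, \cite[Theorem~5.5]{CMN} applied to $N$ recovers that $\ell$ is twice a power of~$p$. Unlike the form of the result in~\cite{Ugo:thesis} (see Remark~\ref{rem:Ugolini}), no additional lower bound on the constituent lengths is needed: the inequality $j_r-j_{r-1}\ge n$ built into the definition of constituents already suffices for the argument above.
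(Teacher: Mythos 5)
Your first step (the ``window'' computation showing that the sparse sequence $\alpha_{j_r}=\lambda_{j_r}$, $\alpha_i=0$ otherwise, is the only candidate and does reproduce the $\beta_i$ via $\beta_i=\sum_{k=0}^{n-1}(-1)^k\binom{n-1}{k}\alpha_{i+k}$) is correct and matches the computation preceding Equation~\eqref{eq:ordinary_from_type_1}. But the proof has a genuine gap at exactly the point you flag yourself: you never verify the Jacobi identity for the bracket you write down on $N$, i.e.\ you never show that the forced sequence $(\alpha_i)$ is actually realized by a Lie algebra of type $1$. This is not a routine verification to be waved through: an arbitrary sequence of isolated nonzero entries is in general \emph{not} admissible (the classification in~\cite{CMN,CN} shows, for instance, that the constituent lengths of a type-$1$ algebra are severely constrained), so deducing admissibility ``from the relations already holding in $L$'' would amount to transporting infinitely many polynomial identities from the type-$n$ presentation to the type-$1$ one, and no mechanism for doing so is given. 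As it stands, the existence of $N$ — the entire content of the proposition — is asserted rather than proved.

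The paper circumvents this obstacle by never building $N$ in one shot. It argues by induction on $n$, adjoining one generator at a time: presenting $L=\mathcal{L}(X)/I$ with $X=\{z,e_n\}$ and relators~\eqref{eq:relators}, it takes the unique derivation $D$ of the free Lie algebra with $Dz=e_n$, $De_n=0$, computes $De_i$ explicitly, and checks by a finite binomial calculation that $D$ annihilates every relator, so that $D$ descends to a derivation of $L$. The semidirect product $\bar L=L\rtimes FD$ is then automatically a Lie algebra — no Jacobi verification is needed beyond the derivation property — and is an algebra of type $n-1$ whose constituents again have the special form (Equation~\eqref{eq:relations-}), so one descends to type $1$ in $n-1$ steps. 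If you want to salvage your direct construction, you would essentially have to reorganize it into this derivation/extension argument anyway; that is where the precise binomial coefficients in the hypothesis do their work, in the identity $\binom{n-1}{j_r-i}\binom{n-2}{j_{r+1}-i-n}\cdot\frac{n-1}{j_{r+1}-i-n+1}=\binom{n-2}{j_r-i}\binom{n-1}{j_{r+1}-i-n+1}\cdot\frac{n-1}{n-1-j_r+i}$ type cancellation that makes $D$ kill the relators.
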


\begin{proof}
The proof is by induction on $n$, with $n=1$ as the trivially true base case.
Thus, let $L$ be an algebra of type $n$, for some $n>1$, satisfying the stated hypotheses.
It will be sufficient to embed $L$ as a (graded) subalgebra of an algebra $\bar L$ of type $n-1$
satisfying the corresponding hypotheses.

Let $J$ be the set of degrees of the final element of each constituent of $L$.
Order the elements of $J$ as $j_1<j_2<\cdots$ and set $j_0=0$ for convenience, so that the $r$th constituent has length $\ell_r=j_r-j_{r-1}$ for all $r$.
Then $L$ is generated by $z$ and $e_n$, with relations
\begin{equation}\label{eq:relations}
[e_i,e_n]=
(-1)^{j_r-i} \binom{n-1}{j_r-i}\lambda_{j_r} e_{i+n}
\qquad
\text{for $j_{r-1}<i\le j_r$},
\end{equation}
where of course we require that $i>n$ as well.
We will extend $L$ to an algebra $\bar L$ of type $n-1$ containing $L$ as an ideal of codimension one,
and spanned over $L$ by an element $e_{n-1}$ of degree $n-1$,
satisfying
\begin{equation}\label{eq:relations-}
[e_i,e_{n-1}]=
(-1)^{j_r-i} \binom{n-2}{j_r-i}\lambda_{j_r} e_{i+n-1}
\qquad
\text{for $j_{r-1}<i\le j_r$}.
\end{equation}
To achieve that we view $L$ as a quotient of a free Lie algebra $\mathcal{L}(X)$ on a free generating set $X=\{z,e_n\}$,
with relations given by Equation~\eqref{eq:relations}.
Hence $L=\mathcal{L}(X)/I$, where $I$ is the ideal of $\mathcal{L}(X)$ generated by the relators
\begin{equation}\label{eq:relators}
[e_i,e_n]-
(-1)^{j_r-i} \binom{n-1}{j_r-i}\lambda_{j_r} e_{i+n}
\qquad
\text{for $j_{r-1}<i\le j_r$}.
\end{equation}

By general properties of free Lie algebras there is a unique derivation $D$ of $\mathcal{L}(X)$ such that
$Dz=e_n$ and $De_n=0$.
We now prove inductively that
\begin{equation*}
De_i=
(-1)^{j_r-i+1} \binom{n-2}{j_r-i}\lambda_{j_r} e_{i+n-1}
\qquad
\text{for $j_{r-1}<i\le j_r$}.
\end{equation*}
In fact, for $j_{r-1}+1<i\le j_r$, starting with $i=n+1$ in case $r=1$, we have
\begin{align*}
De_i
&=
D[e_{i-1},z]
=[De_{i-1},z]+[e_{i-1},e_n]
\\&=
(-1)^{j_r-i} \binom{n-2}{j_r-i+1}\lambda_{j_r} e_{i+n-1}
+(-1)^{j_r-i+1} \binom{n-1}{j_r-i+1}\lambda_{j_r} e_{i+n-1}
\\&=
(-1)^{j_r-i+1} \binom{n-2}{j_r-i}\lambda_{j_r} e_{i+n-1}.
\end{align*}
In the remaining cases, where $i=j_r+1$ for some $r$, we have
\begin{align*}
De_{j_r+1}
&=
D[e_{j_r},z]
=[De_{j_r},z]+[e_{j_r},e_n]
\\&=
-\lambda_{j_r} e_{j_r+n}
+\lambda_{j_r} e_{j_r+n}
\\&=
(-1)^{j_{r+1}-j_r} \binom{n-2}{j_{r+1}-j_r-1}\lambda_{j_{r+1}} e_{j_r+1},
=0
\end{align*}
because $j_{r+1}-j_r=\ell_{r+1}$, a constituent length, cannot be less than $n$ by definition.

Now we show that $DI\subseteq I$, which we need for $D$ to induce a derivation of the quotient $\mathcal{L}(X)/I$.
In fact we will show that $DI=0$.
It suffices to show that each relator in Equation~\eqref{eq:relators} belongs to the kernel of $D$.
On the one hand, for $j_{r-1}<i\le j_r$ we have
\[
D[e_i,e_n]
=[De_i,e_n]
=(-1)^{j_r-i+1} \binom{n-2}{j_r-i}\lambda_{j_r} [e_{i+n-1},e_n].
\]
The binomial coefficient vanishes unless $i+n-1>j_r$, in which case
\[
D[e_i,e_n]
=
(-1)^{j_{r+1}-j_r-n} \binom{n-2}{j_r-i}\lambda_{j_r}
\binom{n-1}{j_{r+1}-i-n+1}\lambda_{j_{r+1}}
e_{i+2n-1}.
\]
On the other hand, for $j_{r-1}<i\le j_r$ the binomial coefficient in
\[
(-1)^{j_r-i} \binom{n-1}{j_r-i}\lambda_{j_r} De_{i+n}
\]
vanishes unless $i+n>j_r$, in which case this expression equals
\[
(-1)^{j_{r+1}-j_r-n+1} \binom{n-1}{j_r-i}\lambda_{j_r}
\binom{n-2}{j_{r+1}-i-n}\lambda_{j_{r+1}}
e_{i+2n-1}.
\]
However, note that when $i=j_r-n+1$ the second binomial coefficient in this expression vanishes because $j_{r+1}-j_r\ge n$.
Because for $i+n-1>j_r$ we have
\[
\binom{n-1}{j_r-i}=
\frac{n-1}{n-1-j_r+i}\binom{n-2}{j_r-i}
\]
and
\[
\binom{n-1}{j_{r+1}-i-n+1}=
\frac{n-1}{j_{r+1}-i-n+1}\binom{n-2}{j_{r+1}-i-n}
\]
the two expressions we have found are equal.
We conclude that every relator~\eqref{eq:relators} belongs to the kernel of $D$.
Consequently, $D$ induces a derivation of $\mathcal{L}(X)/I$, which we have identified with $L$.
Taking as $\bar L$ the semidirect product of $L$ with a one-dimensional Lie algebras spanned by the derivation $D$,
and calling $e_{n-1}$ the element of $\bar L$ corresponding to $D$, we find that $\bar L$ is an algebra of type $n-1$,
satisfying the relations in Equation~\eqref{eq:relations-} as desired.
\end{proof}

\begin{rem}\label{rem:Ugolini}
Note that in the final part of the above proof
each of the two terms of the relator in Equation~\eqref{eq:relators} vanishes separately, rather than just their difference,
except in the special situation where $j_r-j_{r-1}=j_{r-1}-j_r=n$ and $i=j_{r-1}+1$.
That cannot occur, in particular, if each constituent length $\ell_r$ exceeds its minimal value $n$ allowed by the definition.
Such assumption was amply satisfied in Ugolini's proof of his version of this result~\cite[Lemma~3.1.2]{Ugo:thesis}
because of his blanket assumption $p>2n$, and that made his proof a little easier than ours.
\end{rem}

In conclusion, if $N$ is an algebra
of type $1$, with first constituent of length $2q$,
and an algebra $L$ of type $n$ with $n\le q$
is obtained as a subalgebra of $N$
according to the construction given earlier
in this section,
then according to Lemma~\ref{lemma:Ugolini}
this origin of $L$ can be detected from its
sequence $(\beta_i)_{i>n}$.
Furthermore, $N$ can be uniquely recovered from $L$,
because the sequence $(\alpha_i)_{i>1}$ of $L$ can be recovered from the sequence of $N$.
No such claim is made if $n>q$.
In fact, as an extreme example, if $N$ is soluble
then a subalgebra $L$ of type $n$ will be metabelian
for $n$ large enough.

\section{Constituent lengths}\label{sec:const_lengths}

In this section we establish some general facts on the constituent lengths $\ell=\ell_1,\ell_2,\ell_3,\ldots$ of an algebra $L$ of arbitrary type $n$.
Assuming $L$ not metabelian, we show that $\ell$ is even, and that all constituent lengths satisfy $\ell/2\le\ell_r\le\ell$.
The upper bound on $\ell_r$ guarantees, in particular, that all constituent lengths are finite,
which we assumed when defining them in Section~\ref{sec:constituents}.
When $n=p$ the lower bound on $\ell_2$ will be crucial for our proof of Theorem~\ref{thm:first_length}.
The lower bound for further constituent lengths $\ell_r$ will not be used in this paper,
but is needed in other parts of the proof of Theorem~\ref{thm:classification} as presented in~\cite{Sca:thesis}.
However, it takes no more effort to prove those bounds for arbitrary constituent lengths, and for algebras of arbitrary type $n$.

For convenience we recall our general notation for algebras of type $n$ from Section~\ref{sec:constituents}.
Thus, let $L$ be an algebra of type $n$, generated by two elements $z$ and $e_n$, of degrees $1$ and $n$.
Hence $L_i=Fe_i$ for each $i>n$, where $e_i=[e_n,z^{i-n}]$.
Let $(\beta_i)_{i>n}$ be the sequence of $L$, where $[e_i,e_n]=\beta_ie_{i+n}$,
with the normalization $\beta_{\ell-n+1}=1$.
According to the definition of constituents we have
\begin{subequations}
\begin{align}
[e_i, e_n]&=0\quad\text{for }n<i\le\ell-n,
 \label{eq:first_const_n}\\
[e_{\ell-n+1},e_n]&=e_{\ell+1},
 \label{eq:first_const_end_n}\\
[e_{\ell+j},e_n]&=0\quad\text{ for }0<j\le \ell_2-n,
 \label{eq:second_const_n}\\
[e_{\ell+\ell_2-n+1},e_n]&=\beta_{\ell+\ell_2-n+1}e_{\ell+\ell_2+1}\ne 0,
 \label{eq:second_const_end_n}
\end{align}
\end{subequations}
with equations analogous to Equations~\eqref{eq:second_const_n} and~\eqref{eq:second_const_end_n} for further constituents.

\begin{lemma}\label{lemma:ell_even}
The length $\ell$ of the first constituent of a non-metabelian algebra $L$ of type $n$ is even.
\end{lemma}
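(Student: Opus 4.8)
The plan is to package the whole bracket structure of $L$ into the array of structure constants $c_{i,j}$ defined by $[e_i,e_j]=c_{i,j}\,e_{i+j}$ for $i,j\ge n$, and then read the parity of $\ell$ off a single identity. Two features of this array are immediate: it is antisymmetric, $c_{j,i}=-c_{i,j}$, with $c_{n,n}=0$; and $c_{i,n}=\beta_i$ for $i>n$, so that Equations~\eqref{eq:first_const_n} and~\eqref{eq:first_const_end_n} say exactly that $c_{i,n}=0$ for $n<i\le\ell-n$ while $c_{\ell-n+1,n}=1$.

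First I would record the recursion governing the array. Applying the Jacobi identity to $e_i$, $e_j$ and $z$, and using $[e_i,z]=e_{i+1}$ and $[e_j,z]=e_{j+1}$ (valid since $i,j\ge n$), one obtains
\[
c_{i,j}=c_{i+1,j}+c_{i,j+1}\qquad\text{for all }i,j\ge n.
\]
Iterating this Pascal-type relation on the first index gives the closed form
\[
c_{n+m,\,n+k}=\sum_{t=0}^{m}(-1)^{t}\binom{m}{t}\,c_{n,\,n+k+t},
\]
which follows by a one-line induction on $m$ using $\binom{m}{t}+\binom{m}{t-1}=\binom{m+1}{t}$. In particular the whole array is determined by the ``edge'' values $c_{n,n+k}=-\beta_{n+k}$ (together with $c_{n,n}=0$).

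Now put $d=\ell-2n+1$; since the first constituent has length at least $2n$ we have $d\ge 1$, the edge values $c_{n,n+k}$ vanish for $0\le k<d$, and $c_{n,n+d}=-\beta_{\ell-n+1}=-1\ne 0$. Evaluating the closed form at $m=d$, $k=0$, every term with $t<d$ vanishes and only $t=d$ survives, so $c_{n+d,\,n}=(-1)^{d}c_{n,\,n+d}$. On the other hand antisymmetry gives $c_{n+d,\,n}=-c_{n,\,n+d}$. Comparing the two, and cancelling the nonzero factor $c_{n,n+d}$, forces $(-1)^{d}=-1$; hence $d$ is odd and $\ell=2n+d-1$ is even.

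I do not expect a genuine obstacle: the whole argument is a short piece of arithmetic in the structure constants. The only points that need care are verifying that every use of the Jacobi identity stays within the range $i,j\ge n$ where $[e_i,z]=e_{i+1}$ holds, and checking the collapse of the binomial sum at $m=d$, $k=0$. The non-metabelian hypothesis enters only to guarantee that there is a nonzero two-step centralizer, so that the first constituent, and with it $\ell$, is defined.
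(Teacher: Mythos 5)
Your proof is correct and is essentially the paper's argument in different clothing: your Pascal-type recursion $c_{i,j}=c_{i+1,j}+c_{i,j+1}$ and its closed form are exactly the generalized Jacobi identity, and the paper likewise combines that identity with antisymmetry (in the form $[[e_n,z^r],[e_n,z^r]]=0$ with $2r=\ell-2n+1$, arguing by contradiction) to extract the parity of $\ell$. The only cosmetic difference is that you evaluate at the off-diagonal entry, comparing $c_{n+d,n}$ with $-c_{n,n+d}$ to read off $(-1)^d=-1$ directly, rather than squaring the midpoint element.
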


\begin{proof}
For algebras of type $1$ this is part of~\cite[Lemma~5.1]{CMN},
and as mentioned there it reflects a well-known property of exceptional $p$-groups of maximal class.
The proof for $n>1$ is the same aside from notation.
Thus, if $\ell$ is odd then $\ell-2n+1$ is even, say $\ell-2n+1=2r$, and we have
\[
0=[[e_n,z^r],[e_n,z^r]]
=\sum_{k=0}^{r} (-1)^k \binom{r}{k}[e_n,z^{r+k},e_n,z^{r-k}]
=(-1)^r [e_n,z^{2r},e_n],
\]
because $\beta_i=0$ for $i\le\ell-n$.
However, this contradicts the fact that $\beta_{\ell-n+1}\neq 0$.
\end{proof}

The calculation in the above proof, when done with an arbitrary $r$,
\[
0=[[e_n,z^r],[e_n,z^r]]
=\sum_{k=0}^{r} (-1)^k \binom{r}{k}[e_n,z^{r+k},e_n,z^{r-k}]
\]
yields
\[
\sum_{k=0}^{r} (-1)^k \binom{r}{k}\beta_{n+r+k}
=0,
\]
allowing us to express $\beta_{n+2r}$ as a linear combination of earlier entries of the sequence.
Thus, any $\beta_j$ where $j$ has the same parity as $n$ can be expressed as a linear combination of the previous ones.
One may view Lemma~\ref{lemma:ell_even} as a direct consequence of this simple fact, which for $n=2$ appeared as~\cite[Lemma~2.1]{CVL03}.
Its special case where $r=\ell/2-n+1$ yields
\begin{equation}\label{eq:beta}
\beta_{\ell-n+2}=(\ell/2-n+1)\beta_{\ell-n+1},
\end{equation}
which we will use near the end of our proof of Theorem~\ref{thm:first_length}, in Section~\ref{sec:first_length_proof}.

\begin{lemma}\label{lemma:upper_bound}
Let $L$ be a non-metabelian algebra of type $n$, with first constituent of length $\ell$.
Then its sequence $(\beta_i)_{i>n}$ can contain at most $\ell-n$ consecutive zero entries.
Consequently, the constituents lengths $\ell=\ell_1,\ell_2,\ell_3,\ldots$ of $L$ satisfy $\ell_r\le\ell$ for all $r$.
\end{lemma}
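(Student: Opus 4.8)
The plan is to prove the two assertions of Lemma~\ref{lemma:upper_bound} in turn, deducing the second from the first together with Lemma~\ref{lemma:ell_even}. The core of the argument is the observation that a long run of zeros in the sequence $(\beta_i)_{i>n}$ means $\ad e_n$ annihilates a long stretch of consecutive homogeneous components, and this propagates backwards: if $e_n$ kills $e_i,\ldots,e_{i+s}$ for a sufficiently long window, one can use the generalized Jacobi identity to show that $e_n$ already kills some \emph{earlier} element that ought to be nonzero, eventually contradicting $[e_{\ell-n+1},e_n]\neq 0$. Concretely, suppose for contradiction that the sequence contains $\ell-n+1$ consecutive zeros, say $\beta_{i}=\beta_{i+1}=\cdots=\beta_{i+\ell-n}=0$, which by the definition of the first constituent we may take with $i>\ell$ (the first constituent already has the maximal allowed gap of $\ell-n$ zeros at its start). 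I would then revisit the symmetry identity underlying Lemma~\ref{lemma:ell_even}: for a suitable $r$ with $n+2r$ lying in the forbidden window, expand $0=[[e_n,z^{r'}],[e_{m},z^{r''}]]$ via the generalized Jacobi identity with $e_m$ a generator of an appropriate component, obtaining a relation expressing $\beta_{i+\ell-n+1}$ (the entry just past the gap, which we are free to assume is the next nonzero one) as a combination of the $\beta_j$ inside the gap plus terms that vanish because of \eqref{eq:first_const_n}. Pushing this down through the constituent structure forces $[e_{\ell-n+1},e_n]=0$, contradicting \eqref{eq:first_const_end_n}.

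More precisely, the mechanism I have in mind mirrors the proof of Lemma~\ref{lemma:ell_even}: if $\beta_{n+k}=0$ for all $k$ in a range $[1,\ell-n]$ and also $\beta_{n+k}=0$ for $k$ in a further range immediately after some later point, then applying $0=[[e_a,z^{r}],[e_a,z^{r}]]$ for $e_a$ the first basis element past an earlier constituent, together with expansions of $[[e_a,z^r],[e_b,z^s]]$, one gets linear relations among the $\beta$'s in which every term is either inside the assumed zero-run or is an earlier entry already known to vanish; the only surviving term is the first nonzero $\beta$ past the run, which is therefore forced to be $0$ as well. Iterating, every entry of the sequence would be zero, i.e.\ $L$ would be metabelian, contrary to hypothesis. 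This yields the bound: the sequence cannot contain $\ell-n+1$ consecutive zeros, hence at most $\ell-n$.

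For the consequence, recall that by construction each constituent past the first consists of a (possibly empty) string of zeros followed by a single nonzero entry, while the first constituent $(\beta_{n+1},\ldots,\beta_\ell)$ consists of $\ell-n$ zeros followed by the nonzero $\beta_{\ell-n+1}=1$, and by our stipulation is assigned length $\ell$ (namely $n$ more than its number of entries). A constituent of length $\ell_r$ with $r>1$ thus contains exactly $\ell_r-1$ zeros before its terminal nonzero entry, and the first constituent likewise contains a block of $\ell-n$ consecutive zeros. If some $\ell_r>\ell$ with $r>1$, that constituent would contain $\ell_r-1\ge\ell>\ell-n$ consecutive zeros, contradicting the first part; for $r=1$ we have $\ell_1=\ell\le\ell$ trivially. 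Hence $\ell_r\le\ell$ for all $r$, and in particular every constituent is finite, justifying the recursive definition of Section~\ref{sec:constituents}.

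The main obstacle I anticipate is making the backward-propagation step fully rigorous: one must choose the auxiliary elements $e_a$, $e_b$ and the exponents $r$, $s$ in the generalized Jacobi expansions so that \emph{all} resulting terms land either strictly inside the presumed zero-run or among entries of strictly smaller index already shown to vanish, leaving precisely one ``new'' term to be killed; getting the index bookkeeping to close — especially near the boundary between the first and second constituents, and handling the $\ell-n$ versus $\ell$ discrepancy in the length convention for the first constituent — is where the care is needed. Everything else is a routine consequence of the constituent definition.
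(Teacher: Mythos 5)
There is a genuine gap: the first (and only substantial) assertion of the lemma --- that a run of $\ell-n+1$ consecutive zeroes leads to a contradiction --- is never actually proved. You describe a ``backward propagation'' mechanism modelled on the identity behind Lemma~\ref{lemma:ell_even}, but you do not exhibit the specific Lie products and exponents that would make it work, and you yourself flag this as the main obstacle. The difficulty is real: for $a>n$ an expansion of $0=[[e_a,z^{r}],[e_a,z^{r}]]$ does not produce a linear relation in single entries $\beta_j$ but in iterated combinations of them (since $[e_{a+r+k},e_a]$ must itself be expanded through $e_a=[e_n,z^{a-n}]$), and no choice of indices is given for which all terms but one land in the presumed zero-run. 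Your alternative contradiction in the second paragraph is moreover logically broken: even if every entry \emph{after} the run were forced to vanish, $L$ would not be metabelian, because $\beta_{\ell-n+1}\neq 0$ already forces $[e_{\ell-n},e_{n+1}]=(\beta_{\ell-n}-\beta_{\ell-n+1})e_{\ell+1}\neq 0$, which lies in $(L^2)^2$. ``Sequence eventually zero'' does not imply metabelian; ruling that out is precisely a special case of what the lemma asserts, so this route is close to circular. The idea you are missing is much simpler: let $\beta_i$ be the nonzero entry immediately preceding the run, and compute $[e_i,e_{\ell+1}]$ in two ways using the two expressions $e_{\ell+1}=[e_n,z^{\ell-n+1}]=[e_{\ell-n+1},e_n]$. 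The first expansion collapses (by the assumed zeroes) to $[e_i,e_n,z^{\ell-n+1}]=\beta_ie_{i+\ell+1}\neq 0$; the second gives $[e_i,e_{\ell-n+1},e_n]-[e_i,e_n,e_{\ell-n+1}]$, and both terms vanish under the same assumption. No iteration or propagation is needed.

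A secondary error: your description of constituents past the first as ``a string of zeros followed by a single nonzero entry,'' containing ``$\ell_r-1$ zeros,'' is the type-$1$ picture and is wrong for type $n>1$. By the definition in Section~\ref{sec:constituents}, a constituent of length $\ell_r$ ($r>1$) consists of $\ell_r-n$ zeroes followed by $n$ entries of which only the first is guaranteed nonzero. The deduction $\ell_r\le\ell$ still goes through --- indeed more directly, since $\ell_r-n$ consecutive zeroes must satisfy $\ell_r-n\le\ell-n$ --- but as written your count does not match the objects being discussed.
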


\begin{proof}
Suppose for a contradiction that a nonzero entry $\beta_i$ of the sequence is followed by at least $\ell-n+1$ zeroes,
that is, $[L_{i+j},e_n]=0$ for $0<j\le\ell-n+1$.
Using the generalized Jacobi identity we find
\begin{align*}
[e_i,e_{\ell+1}]
=[e_i,[e_n,z^{\ell-n+1}]]
&=\sum_{j=0}^{\ell-n+1} (-1)^j \binom{\ell-n+1}{j}[e_{i+j},e_n,z^{\ell-n+1-j}]\\
&=[e_i,e_n,z^{\ell-n+1}]
=\beta_i e_{i+\ell+1}
\neq 0.
\end{align*}
We now expand $[e_i,e_{\ell+1}]$ in a different way, using
$[e_{\ell-n+1},e_n]=e_{\ell+1}$,
and we find
\[
[e_i,e_{\ell+1}]
=[e_i,[e_{\ell-n+1},e_n]]
=[e_i,e_{\ell-n+1},e_n]-[e_i,e_n,e_{\ell-n+1}].
\]
Each of the two Lie products in the difference vanishes because of our assumption,
and this provides the desired contradiction.
\end{proof}

Note that the proof of Lemma~\ref{lemma:upper_bound}
may be interpreted as establishing the weaker conclusion that
the sequence of $L$ cannot be zero from some point on,
thus justifying our recursive definition of constituents in Section~\ref{sec:constituents}.

\begin{lemma}\label{lemma:constituent_bound}
Let $L$ be a non-metabelian algebra of type $n$, with constituent lengths $\ell=\ell_1,\ell_2,\ell_3,\ldots$ in order of occurrence.
Then $\ell_r\ge \ell/2$ for all $r$.
\end{lemma}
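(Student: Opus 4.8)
The statement to prove is that every constituent length $\ell_r$ of a non-metabelian algebra of type $n$ satisfies $\ell_r\ge\ell/2$. I would argue by contradiction, supposing some constituent has length $\ell_r<\ell/2$, and extract from this a relation that, combined with the structure of the first constituent (Equations~\eqref{eq:first_const_n}--\eqref{eq:first_const_end_n}) and Lemma~\ref{lemma:upper_bound}, forces one of the two-step centralizers inside that short constituent to vanish, contradicting that it terminates in a nonzero entry. The driving principle is the same one that makes the first constituent ``long'': the sandwich-type vanishing encoded by having long blocks of zeroes, now applied to an internal constituent rather than to $e_n$ itself.

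\smallskip

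\textbf{Key steps.} First I would fix notation: suppose the $r$th constituent is $(\beta_{m+1},\ldots,\beta_{m+\ell_r})$, so that $[e_i,e_n]=0$ for $m<i\le m+\ell_r-n$ and $[e_{m+\ell_r-n+1},e_n]\ne 0$; the previous constituent ends at degree $m$, i.e. $[e_m,e_n]=\beta_m e_{m+n}\ne 0$ (for $r>1$; for $r=1$ the claim $\ell\ge\ell/2$ is trivial, so assume $r\ge 2$). The element I want to play the role that $e_{\ell+1}=[e_{\ell-n+1},e_n]$ played in the proof of Lemma~\ref{lemma:upper_bound} is $u:=[e_m,e_{?}]$ built from the first constituent — more precisely I would use $[e_m,e_{\ell+1}]$, expanding it two ways. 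One way, via the generalized Jacobi identity and the fact that $\beta_i=0$ for $n<i\le\ell-n$ (Equation~\eqref{eq:first_const_n}), collapses to $[e_m,e_n,z^{\ell-n+1}]=\beta_m e_{m+\ell+1}\ne 0$, provided the string of zeroes we need in the sequence right after $\beta_m$ is long enough — which is exactly where the hypothesis $\ell_r<\ell/2$ (so the $r$th constituent is short, but more importantly the relevant zero-run reaches far enough) must be fed in. The other way, via $[e_{\ell-n+1},e_n]=e_{\ell+1}$, gives $[e_m,e_{\ell-n+1},e_n]-[e_m,e_n,e_{\ell-n+1}]$, and I would show both terms lie in a Lie power high enough that the relevant coefficient is forced, by the shortness of the intervening constituent(s), to be $0$ — yielding the contradiction.

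\smallskip

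\textbf{Making the length bookkeeping precise.} The real content is a careful count of where the nonzero entries can sit. If $\ell_r<\ell/2$ then two consecutive short constituents together span fewer than $\ell$ entries, so between $\beta_m$ (the last nonzero entry before the $r$th constituent) and $\beta_{m+\ell_r+\ell_{r+1}-n+1}$ (first nonzero after the $(r{+}1)$st) there is already tension with Lemma~\ref{lemma:upper_bound}'s bound of at most $\ell-n$ consecutive zeroes; but Lemma~\ref{lemma:upper_bound} alone only gives $\ell_r\le\ell$, so I need the sharper symmetry. The right move, I expect, is to apply the generalized Jacobi identity to $[e_a,[e_b,z^{c}]]$ with $a$ in the previous constituent, $b=\ell-n+1$, and $c=\ell-n+1$, track which terms $[e_{a+j},e_n,\cdots]$ survive (only those with $a+j$ at a nonzero position), and observe that shortness of the $r$th constituent makes the surviving terms concentrate in a way that pins a coefficient. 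This is the step I expect to be the main obstacle: choosing the exact pair of expansions and the exact indices so that ``one side is manifestly nonzero, the other is manifestly a multiple of a coefficient that must vanish'' both hold simultaneously. Once the indices are chosen correctly the algebra is routine manipulation of binomial coefficients, exactly as in Lemmas~\ref{lemma:ell_even} and~\ref{lemma:upper_bound}; the art is entirely in the setup.
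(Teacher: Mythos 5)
Your proposal is a plan rather than a proof, and the step you yourself flag as ``the main obstacle'' is exactly where it breaks down. Expanding $[e_m,e_{\ell+1}]$ via $e_{\ell+1}=[e_n,z^{\ell-n+1}]$ does \emph{not} collapse to a single term when the $r$th constituent is short: the surviving terms of the generalized Jacobi expansion are those $j$ with $\beta_{m+j}\ne 0$, and under the hypothesis $\ell_r<\ell/2$ these include the entire nonzero tail of the $r$th constituent (up to $n$ undetermined coefficients $\beta_{m+\ell_r-n+1},\ldots,\beta_{m+\ell_r}$) and possibly tails of later constituents as well. So neither of your two expansions is ``manifestly nonzero'' or ``manifestly a multiple of a single coefficient'', and no contradiction gets pinned down. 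There is also an error in the setup: the last entry $\beta_m$ of the previous constituent need not be nonzero --- only the last $n$ entries of a constituent \emph{can} be nonzero, and it is $\beta_{m-n+1}$, not $\beta_m$, that is guaranteed to be nonzero --- so your claimed identity $[e_m,e_n,z^{\ell-n+1}]=\beta_m e_{m+\ell+1}\ne 0$ may simply fail.

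The missing idea, which is what the paper's proof rests on, is to use a different vanishing relation supplied by the first constituent: $[e_i,e_j]=0$ whenever $i,j\ge n$ and $i+j\le\ell$, so in particular $[e_{j+1},e_j]=0$ for $n\le j<\ell/2$. Writing $s$ for the degree at which the $r$th constituent ends, one brackets this relation with $e_{s-j}$: one checks (using the inductive bound $\ell_r\ge\ell/2$, so that $s-j$ still lies in the zero block of the $r$th constituent) that $[e_{s-j},e_j]=0$ while $[e_{s-j},e_{j+1}]$ is a nonzero multiple of $e_{s+1}$, so that $0=[e_{s-j},[e_{j+1},e_j]]=[e_{s-j},e_{j+1},e_j]$ yields $[e_{s+1},e_j]=0$ for all $n\le j<\ell/2$. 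Since $[e_{s+1},e_{\ell_{r+1}}]=\pm[e_{s+\ell_{r+1}-n+1},e_n]\ne 0$ by the definition of the $(r+1)$st constituent, this forces $\ell_{r+1}\ge\ell/2$ directly, with no contradiction argument needed. Your instinct to exploit the long block of zeroes in the first constituent through the generalized Jacobi identity is the right one, but the element to act with is $[e_{j+1},e_j]$, not $e_{\ell+1}$.
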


\begin{proof}
We start with proving the claim for the second constituent.
Together with the generalized Jacobi identity, Equation~\eqref{eq:first_const_n} yields
$[e_i,e_j]=[e_i,[e_n,z^{j-n}]]=0$
for $i,j\ge n$ with $i+j\le\ell$.
In particular, for $n\le j <\ell/2$ we have
$[e_{j+1},e_{j}]=0$
and $[e_{\ell-j},e_j]=0$, but $[e_{\ell-j},e_{j+1}]=[e_{\ell-j},[e_n,z^{j+1-n}]]=(-1)^{j+1-n}e_{\ell+1}$
according to Equation~\eqref{eq:first_const_end_n}, whence
\[
0=[e_{\ell-j},[e_{j+1},e_{j}]]
=[e_{\ell-j},e_{j+1},e_{j}]
=(-1)^{j+1-n}[e_{\ell+1},e_{j}].
\]
However, Equations~\eqref{eq:second_const_n} and~\eqref{eq:second_const_end_n} together with the generalized Jacobi identity imply
\[
[e_{\ell+1},e_{\ell_2}]=(-1)^{\ell_2-n}[e_{\ell+\ell_2-n+1},e_n]\neq 0,
\]
whence $\ell_2\ge\ell/2$ as desired.

The proof for subsequent constituents is analogous, proceeding by induction.
Thus, suppose we have shown that $\ell_r\ge\ell/2$ for some $r>1$.
Setting $s=\ell+\ell_2+\cdots+\ell_r$,
the $r$-th constituent is characterized by
$[e_{s-\ell_r+j},e_n]=0$ for $0<j\le\ell_r-n$
and $[e_{s-n+1},e_n]=\beta_{s-n+1}e_{s+1}\neq 0$.
As in the base case $r=2$, for $n\le j <\ell/2$ these equations together with Equation~\eqref{eq:first_const_n} and~\eqref{eq:first_const_end_n} imply
\[
0=[e_{s-j},[e_{j+1},e_{j}]]
=[e_{s-j},e_{j+1},e_{j}]
=(-1)^{j+1-n}\beta_{s-n+1}[e_{s+1},e_{j}],
\]
whence $[e_{s+1},e_{j}]=0$.
However, the corresponding equations for the next constituent, which are
$[e_{s+j},e_n]=0$ for $0<j\le\ell_{r+1}-n$
and $[e_{s+\ell_{r+1}-n+1},e_n]\ne 0$, imply
\[
[e_{s+1},e_{\ell_{r+1}}]=(-1)^{\ell_r-n}[e_{s+\ell_{r+1}-n+1},e_n]\neq 0.
\]
We conclude $\ell_{r+1}\ge\ell/2$ as desired.
\end{proof}

\section{A characterization of the constituent lengths}\label{sec:characterization}

In this section we show how the constituent lengths have an intrinsic meaning in terms of $L$ as a Lie algebra alone, with no reference to its grading
(with an exception for the first constituent length requiring the type $n$ of the grading).
In an algebra $L$ of type $n$ we have $L^i=L_{(i+n-1)}$ for $i>1$, and this is the only ideal of $L$ having codimension $i$.
In particular, we have $L^2=L_{(n+1)}$.
Because of the grading we have $[L^2,L_n]=[L_{(n+1)},L_n]\subseteq L_{(2n+1)}=L^{n+2}$,
and by definition the first constituent length is the largest integer $\ell$ such that
$[L^2,L_n]\subseteq L_{(\ell+1)}=L^{\ell-n+2}$.
When $\ell$ exceeds its smallest allowed value $2n$, which means $[L^2,L_n]\subseteq L^{n+3}$
(whence $[L^2,L_n]\subseteq L^{n+4}$ because $\ell$ is even, as we know from Lemma~\ref{lemma:ell_even}),
or equivalently $\beta_{n+1}=0$,
the constituent lengths admit the following internal characterization in terms of the Lie powers of $L^2$.

\begin{prop}\label{prop:constituent_lengths}
Let $L$ be an algebra of type $n>1$ such that $[L^2,L_n]\subseteq L^{n+3}$.
Then the constituent lengths of $L$ are given by
$\ell=\ell_1=\dim\bigl(L^2/(L^2)^2\bigr)+n$
and
$\ell_r=\dim\bigl((L^2)^r/(L^2)^{r+1}\bigr)$ for $r>1$.
\end{prop}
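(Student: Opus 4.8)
The plan is to reduce everything to a single inductive claim: for a non-metabelian $L$ of type $n$ satisfying the hypothesis (which, as the text notes, is equivalent to $\beta_{n+1}=0$), one has $(L^2)^r=L_{(j_{r-1}+1)}$ for every $r\ge 1$, where $j_0:=n$, $j_1:=\ell$ and $j_r:=j_{r-1}+\ell_r$ for $r\ge 2$. Granting this, $\dim\bigl((L^2)^r/(L^2)^{r+1}\bigr)=(j_r+1)-(j_{r-1}+1)=j_r-j_{r-1}$, which equals $\ell-n$ when $r=1$ and $\ell_r$ when $r\ge 2$, and the proposition follows. Before the induction I would record the facts to be used: $L^i=L_{(i+n-1)}$ for $i>1$, so $L^2=L_{(n+1)}$; each $(L^2)^r$ is a graded ideal of $L$, because $L$ acts by derivations on its ideal $L^2$ and derivations preserve lower central series; and the inequality $j_r-j_{r-1}\ge n+1$ for all $r$, which I would isolate as a preliminary lemma. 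For $r=1$ this says $\ell\ge 2n+1$, which is exactly the content of $\beta_{n+1}=0$ (and is sharpened to $\ell\ge 2n+2$ by Lemma~\ref{lemma:ell_even}); for $r\ge 2$ it follows from $\ell_r\ge\ell/2\ge n+1$, by Lemma~\ref{lemma:constituent_bound}.

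The induction on $r$ has trivial base case $(L^2)^1=L^2=L_{(n+1)}=L_{(j_0+1)}$, and a single inductive step valid for every $r\ge 1$. Assuming $(L^2)^r=L_{(j_{r-1}+1)}$, I write $(L^2)^{r+1}=[(L^2)^r,L^2]=[L_{(j_{r-1}+1)},L_{(n+1)}]$, a graded ideal spanned as a vector space by the brackets $[e_a,e_b]$ with $a\ge j_{r-1}+1$ and $b\ge n+1$. The key computation is that, using $e_b=[e_n,z^{b-n}]$, the relation $[e_i,z]=e_{i+1}$, and the generalized Jacobi identity, one gets $[e_a,e_b]=\bigl(\sum_{k=0}^{b-n}(-1)^k\binom{b-n}{k}\beta_{a+k}\bigr)e_{a+b}$, so only $\beta_a,\dots,\beta_{a+b-n}$ are relevant. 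For the inclusion $(L^2)^{r+1}\subseteq L_{(j_r+1)}$, take $a+b\le j_r$; then every index $a+k$ lies in $[j_{r-1}+1,\,j_r-n]$, on which the $r$-th constituent consists of zeros by definition, so the coefficient vanishes and $[e_a,e_b]=0$. For the reverse inclusion it suffices to show $e_{j_r+1}\in(L^2)^{r+1}$, since $(L^2)^{r+1}$ is an $L$-ideal and bracketing repeatedly with $z$ then produces all of $L_{(j_r+1)}$; here I take $b=n+1$ and $a=j_r-n$ (the memberships $e_{j_r-n}\in(L^2)^r$ and $e_{n+1}\in L^2$ requiring only $j_r-n\ge j_{r-1}+1$, which is where the preliminary inequality enters), and the formula collapses (the other term $\beta_{j_r-n}$ again being zero) to $[e_{j_r-n},e_{n+1}]=-\beta_{j_r-n+1}\,e_{j_r+1}$, which is nonzero because $\beta_{j_r-n+1}$ is the first nonzero entry of the $r$-th constituent. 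This closes the induction.

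I do not expect a hard individual step; each piece is a routine application of the generalized Jacobi identity together with the definition of constituents. The real care goes into the index bookkeeping, and in particular into ruling out the degenerate case $\ell_r=n$: were it to occur, the degree-$(j_r+1)$ component of $(L^2)^{r+1}$ would be forced to vanish, making that power of $L^2$ one degree too small and breaking the formula. This is precisely why the bound $\ell_r\ge\ell/2$ of Lemma~\ref{lemma:constituent_bound}, combined with $\ell\ge 2n+2$, is needed, and why it is cleanest to prove $j_r-j_{r-1}\ge n+1$ once and for all at the start. Finally, one should observe that the non-metabelian hypothesis implicit in speaking of constituents is harmless: the hypothesis $[L^2,L_n]\subseteq L^{n+3}$ already excludes the metabelian algebra with all $\beta_i=1$, and the one with abelian maximal ideal has no constituents, so $L$ may be assumed non-metabelian throughout.
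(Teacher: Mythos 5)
Your proposal is correct and follows essentially the same route as the paper: an induction on $r$ identifying $(L^2)^{r+1}$ with $L_{(\ell+\ell_2+\cdots+\ell_r+1)}$ by expanding $[e_a,e_b]$ via the generalized Jacobi identity into a linear combination of the $\beta_{a+k}$, with the vanishing range of each constituent giving one inclusion and the first nonzero entry giving the other. The only (harmless) organizational difference is that you obtain the needed bound $\ell_r\ge n+1$ from Lemma~\ref{lemma:constituent_bound} together with $\ell\ge 2n+2$, whereas the paper derives it from a short sandwich-type computation showing every constituent begins with a zero entry.
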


Because $L^2=L_{(n+1)}$, for any algebra of type $n$ we have $(L^2)^2\subseteq L_{(2n+3)}=L^{n+4}$,
as no nonzero homogeneous element of $(L^2)^2$
can have degree lower than the formal degree of $[e_{n+2},e_{n+1}]$.
Consequently, $(L^2)^2$ has codimension at least $n+2$ in $L^2$.
This shows that the characterization of $\ell$ given in Proposition~\ref{prop:constituent_lengths} would necessarily fail without assuming $\ell>2n$ as we did.

\begin{proof}
Because the Lie powers of $L^2=L_{(n+1)}$ are ideals of $L$,
and because $L$ has precisely one ideal of each codimension exceeding $1$, the desired conclusion is equivalent to
$(L^2)^{r+1}=L_{(\ell+\ell_2+\cdots+\ell_r+1)}$
for $r\ge 1$.
While the constituents are defined by the adjoint action of $e_n$,
the Lie powers $(L^2)^r$ are essentially determined by the adjoint action of $e_{n+1}$ on $L^2$, hence we need to analyze that.
Because
\begin{equation}\label{eq:ad_e_{n+1}}
[e_i,e_{n+1}]=[e_i,e_n,z]-[e_{i+1},e_n]
=(\beta_i-\beta_{i+1})e_{i+n+1},
\end{equation}
according to our definition of the first constituent the earliest nonzero such Lie product occurs for $i=\ell-n$, and spans $L_{\ell+1}$.
Because $\ell>2n$ according to our hypothesis
$[L^2,L_n]\subseteq L^{n+3}$
(and then actually $\ell\ge 2n+2$ as $\ell$ is even according to Lemma~\ref{lemma:ell_even}),
such nonzero Lie product $[e_{\ell-n},e_{n+1}]$
belongs to $(L^2)^2$,
and hence $L_{(\ell+1)}\subseteq (L^2)^2$.
To show that equality holds it remains to check that the Lie products
\begin{equation}\label{eq:ad_e_{n+j}}
[e_i,e_{n+j}]
=\sum_{k=0}^{j}(-1)^k\binom{j}{k}[e_{i+k},e_n,z^{j-k}]
=\biggl(\sum_{k=0}^{j} (-1)^k \binom{j}{k}\beta_{i+k}\biggr) e_{i+n+j}
\end{equation}
belong to $L_{(\ell+1)}$ for $i>n$ and $j>0$.
This is so because the coefficients $\beta_{i+k}$ involved vanish unless $i+j\ge\ell-n+1$, in which case
$e_{i+n+j}\in L_{(\ell+1)}$.

To proceed further, note that our hypothesis $\beta_{n+1}=0$ entails $[e_{\ell+1},e_n]=0$, because
\[
0=[e_{\ell-n},[e_n,z,e_n]]=-[e_{\ell-n+1},e_n,e_n]=-\beta_{\ell-n+1}[e_{\ell+1},e_n].
\]
More generally, whenever $\beta_{k-n} =0$ and $\beta_{k-n+1} \ne 0$ for some $k$, a similar calculations yields $\beta_{k+1} =0$.
In particular, this calculation
shows that under the hypothesis
$\beta_{k-n} =0$ every constituent of $L$ begins with a zero entry.
Thus, we have $0=\beta_{\ell+1}=\beta_{\ell+\ell_2+1}$, etc.
Equivalently, every constituent length exceeds $n$.

Now, the length of the second constituent is the smallest integer $\ell_2\ge n$ such that $[e_{\ell+\ell_2-n+1},e_n]\neq 0$.
According to Equation~\eqref{eq:ad_e_{n+1}}, and because $\beta_{\ell+1}=0$, that is the same as the smallest integer $\ell_2\ge n$ such that
$[e_{\ell+\ell_2-n},e_{n+1}]\neq 0$.
Consequently, $L_{(\ell+\ell_2+1)}\subseteq (L^2)^3$, and one sees that equality holds by a similar argument as the one we used for $(L^2)^2$,
considering Equation~\eqref{eq:ad_e_{n+j}} for $i>\ell$ rather than $i>n$.

This argument for $r=2$ can clearly be phrased into an inductive proof of the desired conclusion for all $r\ge 1$.
\end{proof}

The conclusion of Proposition~\ref{prop:constituent_lengths} extends a corresponding characterization of constituent lengths for algebras of type $1$,
which can be proved in a similar way.
However, that characterization holds unconditionally for algebras of type $1$.
In fact, in that case the hypothesis $[L^2,L_n]\subseteq L^{n+3}$ should be read as $[L^2,y]\subseteq L^4$,
which amounts to the chosen generator $y$ spanning the first two-step centralizer of $L$,
and that can always be achieved by a suitable choice of generators.
In contrast, the assumption $\ell>2n$, or $\beta_{n+1}=0$, is a genuine restriction for algebras of type $n>1$.
In particular, for algebras of type $2$ in characteristic zero, as considered in~\cite{ShZe:narrow-Witt},
one has $\beta_3\neq 0$ for the algebra $W$, as well as for one of the two metabelian algebras.
For algebras of type $2$ in odd characteristic, it was observed in~\cite{CVL00} that ``a theory of constituents is only possible when $\beta_3=0$,''
and Section~3 of~\cite{CVL00} was devoted to dealing with algebras of type $2$ where $\beta_3\neq 0$.
Fortunately, in the case $n=p$ of main interest in this paper (after the special case of $p=2$ in~\cite{CVL03}), the condition $\beta_{n+1}=0$ is not restrictive
because it can always be achieved by passing to a translated algebra, as we will see in Proposition~\ref{prop:translation}.

To conclude this section we discuss a competing definition of constituents for an algebra of type $p$,
equal to the characteristic, which was adopted in~\cite{Sca:thesis}.
There, the constituents in our sense were referred to as {\em fake constituents}.
{\em Scarbolo's constituents,} as we call them here referring to~\cite[Subsection~2.2]{Sca:thesis} for a formal definition,
also partition the sequence of $L$ into adjacent blocks, and their entries are all zero except possibly for the last $p$.
Differently from our constituents, Scarbolo's constituents are defined so they always end with a nonzero entry,
with the following entry being zero being the first entry of the next constituent.
It turns out that
each of Scarbolo's constituents ends where our constituent ends, or just one step earlier in case our constituent ends with a zero entry.
This is because the last entry of one of our constituents may vanish, but the last two entries cannot both vanish.
However, proving this fact is hard and requires a crucial part of the proof of Theorem~\ref{thm:classification}.

Scarbolo's constituents would carry a marginal advantage in a description of the algebras $L$ of the family $\mathcal{E}$
see Remark~\ref{rem:fake_constituents} for details.
Their use would affect the statements of Theorems~\ref{thm:first_length} and~\ref{thm:exceptional}, notably because
the length of Scarbolo's first constituent need not be even.
However, each of Scarbolo's constituents needs to start with a zero to be defined, a requirement which our definition avoids.
Consequently, Scarbolo's constituents would generally need to be strictly longer than $n$, whilst our definition
allows $n$ as the shortest possible length of a constituent (and $2n$ as a possible value for $\ell$, as discussed above).
In particular, our Proposition~\ref{lemma:Ugolini} would only make sense if every constituent length $m$ exceeds $n$,
despite being true without that restriction (with our proof, as we discussed in Remark~\ref{rem:Ugolini}).

The lengths of Scarbolo's constituents do admit an intrinsic characterization as relative codimensions in a certain sequence of ideals,
and we refer to~\cite[Subsection~2.2]{Sca:thesis} for details, but the recursive definition of those ideals is much more involved than
the Lie powers of $L^2$ used in our Proposition~\ref{prop:constituent_lengths}.
Finally, both definitions of constituents necessarily coexist in~\cite{Sca:thesis},
because most proofs make reference to our constituents rather than his.
Thus, in this paper we have opted to forgo the one marginal descriptive advantage given by Scarbolo's constituents
to the benefit of clarity and simplicity.

\section{Constituents of algebras of type $p$}\label{sec:type_p}

From now on in this paper we restrict attention to algebras of type $p$, equal to the characteristic.
Because $\binom{p-1}{i} \equiv (-1)^i \pmod{p}$ for $0\le i<p$,
the special constituents satisfying Equation~\eqref{eq:ordinary_from_type_1} are those for which the last $p$ entries are equal.
We call such constituents {\em ordinary}
and, if needed, {\em ending in $\lambda$,} where $\lambda$ is the common value of those entries.
The following result combines the special case $n=p$ of Proposition~\ref{lemma:Ugolini}, and the converse implication which we proved in Section~\ref{sec:constituents}.

\begin{prop}\label{OrdinaryEnding}
Let $L$ be a graded Lie algebra of maximal class of type $p$ over a field of odd characteristic $p$.
Then $L$ is a graded subalgebra of an uncovered algebra of type $1$ if and only if all its constituents are ordinary.
\end{prop}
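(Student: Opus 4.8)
The plan is to read Proposition~\ref{OrdinaryEnding} as the conjunction of the two implications already established earlier in the excerpt, specialized to $n=p$, and to supply only the small bridge that connects the ``ordinary'' terminology of Section~\ref{sec:type_p} with the binomial pattern~\eqref{eq:ordinary_from_type_1}. The key observation, stated at the beginning of Section~\ref{sec:type_p}, is that $\binom{p-1}{i}\equiv(-1)^i\pmod p$ for $0\le i<p$, so the defining relation
\[
\lambda_{j-i}=(-1)^i\binom{n-1}{i}\lambda_j\qquad(0\le i<n)
\]
of an ordinary-type constituent becomes, when $n=p$, simply $\lambda_{j-i}=(-1)^i(-1)^i\lambda_j=\lambda_j$; that is, a constituent of an algebra of type $p$ satisfies~\eqref{eq:ordinary_from_type_1} precisely when its last $p$ entries all coincide, which is exactly our definition of an ordinary constituent.

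Granting that identification, the forward implication is immediate from the contents of Section~\ref{sec:constituents}: there we showed that if an algebra $L$ of type $n$ arises as the graded subalgebra of an uncovered algebra $N$ of type $1$ generated by $z$ and $e_n$, then its sequence splits into blocks of the form~\eqref{eq:ordinary_from_type_1}, provided every constituent of $N$ has length at least $n$ --- which for $n=p$ is guaranteed once the first constituent of $N$ has length at least $2p$, and that in turn is automatic here because, as recalled, the first constituent length of an algebra of type $1$ is twice a power of $p$ (hence at least $2p$). Thus if $L$ embeds into an uncovered algebra of type $1$, all its constituents satisfy~\eqref{eq:ordinary_from_type_1} with $n=p$, i.e.\ are ordinary.

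The reverse implication is exactly Proposition~\ref{lemma:Ugolini} with $n=p$: if every constituent of $L$ is ordinary, then (using the identification above) every constituent has the special binomial form, and Proposition~\ref{lemma:Ugolini} yields that $L$ is a subalgebra of an uncovered algebra of type $1$. One small point to remark is that Proposition~\ref{lemma:Ugolini} requires the constituents of $L$ literally to have the form $(\lambda_{j-n+1},\dots,\lambda_j)$, i.e.\ length exactly $n$; but in the inductive passage from type $n$ to type $n-1$ performed in its proof, constituents of length greater than $n$ are handled as well (the relators simply involve more zero entries, and the verification $DI=0$ goes through verbatim), so no extra hypothesis on constituent lengths is needed. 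Hence the statement follows by combining the two directions.

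I do not expect a genuine obstacle here: the proposition is a packaging result. The only thing requiring a line of care is making the congruence $\binom{p-1}{i}\equiv(-1)^i$ explicit and checking that the hypothesis ``first constituent of the overalgebra has length $\ge 2p$'' needed in the forward direction is automatically met --- which it is, by the classical fact (cited from~\cite{CMN}) that the first constituent length of a non-metabelian algebra of type $1$ is $2q$ for a power $q$ of $p$, together with the trivial observation that the metabelian algebra of type $1$, having all two-step centralizers zero, produces only metabelian subalgebras of type $p$, all of whose (empty or all-zero) constituent data is ordinary in a degenerate sense.
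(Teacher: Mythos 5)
Your proposal is correct and follows exactly the paper's own route: the congruence $\binom{p-1}{i}\equiv(-1)^i\pmod p$ identifies ordinary constituents with the pattern of Equation~\eqref{eq:ordinary_from_type_1}, the forward implication is the computation from Section~\ref{sec:constituents} (with the $\ell\ge 2p$ hypothesis supplied by the first-constituent-length $2q$ fact from~\cite{CMN}), and the converse is Proposition~\ref{lemma:Ugolini} with $n=p$. The paper states the result as precisely this combination, so there is nothing further to add.
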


When $n=p$ (and more generally when $n$ is a power of $p$, which we will not discuss here)
an important reduction is possible which is not available for arbitrary $n$.
This was discovered in~\cite[Lemma~2.3]{CVL03}, and allowed there a classification of algebras of type $2$ in characteristic $2$.
The following result is a straightforward extension of that to arbitrary prime characteristic $p$.

\begin{prop}\label{prop:translation}
Let $L$ be an algebra of type $p$ with sequence $(\beta_i)_{i>p}$.
Then for any $\delta \in F$ there exists an algebra of type $p$ with sequence $(\beta_i+\delta)_{i>p}$.
\end{prop}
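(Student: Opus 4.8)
The plan is to construct the translated algebra directly as a Lie algebra with an explicit basis and multiplication table, and then verify that the Jacobi identity holds. Write $L(\delta)$ for the candidate algebra: it should have the same underlying graded vector space as $L$, with basis $z$ (degree $1$) and $e_i$ (degree $i$) for $i\ge p$, and we want to define a new bracket $[\,\cdot\,,\cdot\,]'$ so that $[e_i,z]'=e_{i+1}$ for $i\ge p$ and $[e_i,e_p]'=(\beta_i+\delta)e_{i+p}$ for $i>p$. Since $z$ and $e_p$ generate, these relations together with the Jacobi identity determine $[\,\cdot\,,\cdot\,]'$ completely; the real content is showing that such a bracket is \emph{consistent}, i.e.\ that extending by Jacobi produces a well-defined alternating bilinear map satisfying the Jacobi identity.

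The cleanest route is to realise $L(\delta)$ as a quotient of a free Lie algebra, mirroring the technique used in the proof of Proposition~\ref{lemma:Ugolini}. Let $\mathcal{L}=\mathcal{L}(X)$ be the free Lie algebra on $X=\{z,e_p\}$, graded by assigning $z$ degree $1$ and $e_p$ degree $p$, and recursively set $e_i=[e_{i-1},z]$ for $i>p$. Let $I_\delta$ be the ideal generated by the relators $[e_i,e_p]-(\beta_i+\delta)e_{i+p}$ for $i>p$ (together with whatever relators force the graded components to have the right dimensions — but these follow automatically once we know $L(\delta)$ has the right Hilbert series, which we get by comparison with $L=\mathcal{L}/I_0$). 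Then $L(\delta):=\mathcal{L}/I_\delta$ is by construction a graded Lie algebra generated by an element of degree $1$ and one of degree $p$, with the prescribed action of $e_p$; one checks it is of type $p$ (i.e.\ that $\dim L(\delta)_i=1$ for $i\ge p$ and $[L(\delta)_i,L(\delta)_1]=L(\delta)_{i+1}$ for $i\ge p$), so that its sequence of two-step centralizers is exactly $(\beta_i+\delta)_{i>p}$.

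The main obstacle — and the place where $n=p$ is essential — is verifying that $\dim L(\delta)_i\le 1$ for all $i$, equivalently that the relators are not too weak: one must rule out the possibility that imposing $[e_i,e_p]=(\beta_i+\delta)e_{i+p}$ creates extra collapse (or fails to collapse enough). The key algebraic input is that the operator $\ad e_p$ should be expressible through $\ad z$: indeed $[u,e_p,z]-[u,z,e_p]=[u,[e_p,z]]=[u,e_{p+1}]$, and iterating the generalized Jacobi identity shows $\bigl(\ad e_p\bigr)$ interacts with $\bigl(\ad z\bigr)^p$ in a way governed by the congruence $\binom{p-1}{k}\equiv(-1)^k\pmod p$. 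Concretely, one shows that in $L$ the constant $\delta$ can be absorbed because the degree-$p$ map $u\mapsto[u,z^p]-\sum_k(-1)^k\binom{p-1}{k}[u,z^k,e_p,z^{p-1-k}]$ (or a suitable variant) is itself a well-defined map that differs from $\ad e_p$ by the identity times a scalar; adding $\delta\cdot\bigl(\text{this identity-like operator}\bigr)$ to $\ad e_p$ stays consistent with the Jacobi identity precisely because that operator is a derivation-like deformation available only when the type equals $p$. The honest way to do this is to exhibit the multiplication table of $L(\delta)$ on all of $L(\delta)$ — not just for brackets with the generators — in terms of that of $L$, namely
\begin{equation*}
[e_i,e_j]' = [e_i,e_j] + \delta\cdot\binom{(i-p)+(j-p)}{i-p}\,\varepsilon_{ij}\,e_{i+j-p},
\end{equation*}
for suitable signs/combinatorial factors $\varepsilon_{ij}$ dictated by the generalized Jacobi identity (with $[e_i,z]'=[e_i,z]$), and then check the Jacobi identity $[[e_i,e_j]',e_k]'+[[e_j,e_k]',e_i]'+[[e_k,e_i]',e_j]'=0$ holds identically in the $\beta$'s and in $\delta$. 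The $\delta^0$ part is Jacobi in $L$; the $\delta^1$ part reduces to a Vandermonde-type binomial identity modulo $p$; there is no $\delta^2$ term. I expect the bookkeeping of signs and binomial coefficients in that identity to be the only genuinely laborious step, while the conceptual content — that translation is possible exactly because $\binom{p-1}{k}\equiv(-1)^k$ makes the degree-$p$ "shift by a constant" compatible with the bracket — is what makes the proof go through for $n=p$ and not for general $n$.
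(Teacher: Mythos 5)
You have correctly identified both the real content of the statement (consistency of the deformed bracket, given that the relations against the generators determine everything) and the conceptual reason it works only for type $p$ (that $(\ad z)^p$ supplies a degree-$p$ shift compatible with the Lie structure in characteristic $p$). However, your plan for establishing consistency does not go through as written. First, the remark that the correct Hilbert series of $\mathcal{L}(X)/I_\delta$ ``follows automatically \ldots\ by comparison with $L=\mathcal{L}(X)/I_0$'' is circular: the relators immediately give $\dim L(\delta)_i\le 1$, and ruling out collapse is precisely what must be proved; nothing about $I_0$ transfers to $I_\delta$ for free. Second, the explicit multiplication table you propose to verify cannot be right: in any graded bracket on this vector space $[L_i,L_j]\subseteq L_{i+j}$, so a correction term proportional to $e_{i+j-p}$ is impossible on degree grounds. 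In fact the correct answer is that for $i,j>p$ there is \emph{no} correction at all: expanding $[e_i,e_j]'=[e_i,[e_p,z^{j-p}]]'$ by the generalized Jacobi identity, the coefficient of $\delta$ is $\sum_{k}(-1)^k\binom{j-p}{k}=0$. The only structure constants that change are those of brackets against the degree-$p$ generator, so there is no Vandermonde-type identity left to check --- but you still owe a proof that this table satisfies the Jacobi identity, and that is the entire difficulty.

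The paper short-circuits all of this in a few lines: embed $L$ in its universal enveloping algebra $A$, and let $L(\delta)$ be the Lie subalgebra of $A$ generated by $z$ and $e_p'=e_p+\delta z^{[p]}$, where $z^{[p]}$ denotes the associative $p$th power. Since $\ad(z^{[p]})=(\ad z)^p$ in characteristic $p$ and $[z^{[p]},z]=0$, one gets $e_i'=e_i$ for $i>p$ and $[e_i,e_p']=(\beta_i+\delta)e_{i+p}$ by direct computation, and consistency is automatic because $L(\delta)$ is exhibited inside an honest Lie algebra rather than presented by generators and relations. (Equivalently: $(\ad z)^p$ is a derivation of $L$ in characteristic $p$ commuting with $\ad z$, so one may adjoin it as an outer derivation and replace $e_p$ by $e_p+\delta(\ad z)^p$ in that extension.) This is the precise form of the ``derivation-like deformation available only when the type equals $p$'' that you were reaching for; I recommend adopting it in place of the direct Jacobi verification.
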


\begin{proof}
Let $L$ be an algebra of type $p$ over a field $F$ of characteristic $p$.
Let $z$ and $e_p$ be its generators chosen as usual and let $(\beta_i)_{i > p}$
be the corresponding sequence.
Embed $L$ into an associative algebra $A$, for example its universal enveloping algebra, with respect to the Lie bracket $[x,y]=xy-yx$.
In this associative algebra we will choose a new algebra of type $p$, as a Lie subalgebra.
To avoid ambiguity with our notation for iterated Lie products we will denote by $x^{[p]}$ the associative $p$th power of an element $x$ of $A$,
and note that $[y,x^{[p]}]$ equals $[y,x^p]$ (where the latter stands for $[y,x,\ldots,x]$ like elsewhere in this paper).
For $\delta\in F$ consider the Lie subalgebra $L(\delta)$ of $A$ generated by $z$ and $e_p'=e_p+\delta z^{[p]}$.
Because $[z^{[p]},z]=0$, for $i>p$ we have
$e_i'=[e_p',z^{p-i}]=e_i$,
and
\[
[e_i',e_p']
=[e_i,e_p]+\delta[e_i, z^{[p]}]
=\beta_ie_{i+p}+\delta[e_i, z^p]=(\beta_i +\delta)e_{i+p}'.
\]
We conclude that $L(\delta)=L_1 \oplus F e_p' \oplus \bigoplus_{i>p}L_i$ is an algebra of type $p$,
with sequence $(\beta_i+\delta)_{i>p}$.
\end{proof}

We refer to $L(\delta)$ as a {\em translate} of $L$.
Proposition~\ref{prop:translation} is one reason why, as we mentioned in Section~\ref{sec:sequence}, one should resist the temptation of extending the definition
of the sequence $(\beta_i)_{i>p}$ of an algebra of type $p$ to include an element $\beta_p=0$ according to the defining rule
$[e_p,e_p]=\beta_p e_{2p}$, as $\beta_p$ would remain unaffected by translation.
The notation $x^{[p]}$ used for associative $p$th powers in the above proof hints at the fact that the construction of $L(\delta)$ could also take place
within any restricted Lie algebra containing $L$ as a subalgebra.
The proof of Proposition~\ref{prop:translation} would clearly extend almost verbatim to an analogous statement for algebras of type a power of $p$,
but we have no use for such generalization in this paper.

The translate $L(\delta)$, with $\delta\neq 0$, is isomorphic with $L$ precisely when
$\lambda\beta_i=\beta_i+\delta$ for some $\lambda\in F^*$ and for all $i>p$.
This means $\beta_i=\delta/(\lambda-1)$ for all $i>p$, hence $L$ has nonzero constant sequence.
As we discussed in Section~\ref{sec:sequence}, all such algebras are isomorphic, and have a maximal abelian ideal of codimension two.
However, if the sequence of $L$ is not constant, then $L$ and $L(\delta)$ are not isomorphic.
They may even have different constituent lengths, see the discussion below, and Remark~\ref{rem:m=p-1} for a concrete example.

Now let $N=\bigoplus_{i \ge 1} N_i$ be an uncovered algebra of type $1$,
and let $L$ hence be the subalgebra (of type $p$) generated by $z$ and $e_p$.
According to their definition in Section~\ref{sec:constituents}, the constituent lengths of $L$ equal those of $N$.
We study the first constituent length of a translate $L(\delta)$ of $L$,
which requires a few facts from the theory of algebras of type $1$ in~\cite{CMN,CN}.

\begin{prop}\label{prop:translated_subalgebra}
Let $N$ be an algebra of type $1$, with $(N^2)^2\neq 0$, and let $L$ be a graded subalgebra of type $p$.
Then $L$ has first constituent length $2q$, for some power $q$ of $p$.
Furthermore, a proper translate $L(\delta)$ of $L$, hence with $\delta\neq 0$,
has first constituent of length $2q$, or $q+p$ for some power $q$ of $p$.
\end{prop}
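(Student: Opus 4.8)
The plan is to analyse how the sequence of an algebra of type $1$ restricts to the type-$p$ subalgebra, and then see what adding a constant $\delta$ does to the first constituent. Recall from Section~\ref{sec:constituents} that the sequence of $N$ consists of isolated nonzero entries $\alpha_j$ separated by strings of zeros, and that the two-step centralizers of $L$ are $\beta_i=\sum_{k=0}^{p-1}(-1)^k\binom{p-1}{k}\alpha_{i+k}$; since $\binom{p-1}{k}\equiv(-1)^k\pmod p$, this simplifies to $\beta_i=\sum_{k=0}^{p-1}\alpha_{i+k}$, i.e.\ $\beta_i$ is the sum of the $p$ consecutive entries $\alpha_i,\dots,\alpha_{i+p-1}$ of the sequence of $N$. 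The first constituent length of $L$ equals that of $N$, which by~\cite[Theorem~5.5]{CMN} (quoted in the Introduction) is $2q$ for a power $q$ of $p$; this gives the first assertion.

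For the translate, first I would observe that because $N$ is not metabelian ($(N^2)^2\neq0$) the first two constituents of $N$ both have length $2q$, and (after the standard normalization) the first constituent of $N$ ends in $1$. Working out $\beta_i+\delta$ for $i$ in the range of the first constituent of $L$: for $i$ small the window $\alpha_i,\dots,\alpha_{i+p-1}$ meets none of the nonzero $\alpha_j$'s, so $\beta_i=0$ and $\beta_i+\delta=\delta$; this stays constant at $\delta$ until the window begins to overlap the nonzero entry $\alpha_q=1$ sitting at position $q$ of the sequence of $N$ (the last entry of $N$'s first constituent as a sequence). Once the window reaches that entry, $\beta_i$ becomes nonzero, and the earliest nonzero value of $\beta_i+\delta$ occurs at the position where $\beta_i$ first differs from $-\delta$. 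The key point is to locate precisely the first index $i$ with $\beta_i\neq -\delta$.

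The heart of the argument, then, is the following dichotomy. If $\delta\neq 0$ is such that $\beta_i+\delta\neq 0$ already at the very first index $i$ where the window touches $\alpha_q$, i.e.\ at $i=q-p+1$ (where $\beta_i=\alpha_q=1$, so we need $\delta\neq -1$), then the first nonzero $\beta_i+\delta$ of $L(\delta)$ is at position $q-p+1$, which by our convention for the first constituent means $\ell=q+p$. If instead $\delta=-1$, then $\beta_{q-p+1}+\delta=0$, and one must push further into the sequence: the window now slides past $\alpha_q=1$ picking up contributions $1$ (while $\alpha_q$ is still in it) and then $0$ again, until it reaches the next nonzero entry of $N$, which — because $N$'s second constituent also has length $2q$ — sits at position $2q$. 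So $\beta_i+\delta=0$ for all $i$ up to $2q-p$, and the first nonzero value appears at $i=2q-p+1$, giving $\ell=2q$ for $L(-1)$ (and more generally $\ell=2q$ only in this single exceptional case). Assembling these two cases yields exactly the stated conclusion: $\ell\in\{2q,\,q+p\}$ for a proper translate.

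The main obstacle I anticipate is making the bookkeeping of the window sums $\beta_i=\sum_{k=0}^{p-1}\alpha_{i+k}$ completely rigorous near the boundary of the constituents, in particular checking that the only value of $\delta$ forcing the first constituent to lengthen is $\delta=-1$ (equivalently, that the partial window sums $\beta_i$ take only the values $0$ and $1$ throughout the first constituent of $L$, so that $\beta_i+\delta=0$ can hold for a prefix only when $\delta\in\{0,-1\}$), and that in the $\delta=-1$ case the relevant stretch of zeros in $L(-1)$ really does reach all the way to position $2q-p+1$ and no further — which is where one invokes that $N$'s second constituent has length exactly $2q$ and that its first constituent ends in $1$ (normalization). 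A secondary point requiring care is the possibility that $q=p$: then $N$'s first constituent as a sequence has length $q-1=p-1<p$, the window $\alpha_i,\dots,\alpha_{i+p-1}$ can straddle two nonzero entries of $N$ at once, and the simple picture above must be adjusted; but $q+p=2q$ in that case, so the conclusion survives, and I would dispatch it as a short separate remark.
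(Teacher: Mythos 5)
Your reduction of the translate's first constituent to the window sums $\beta_i=\sum_{k=0}^{p-1}\alpha_{i+k}$ is a reasonable starting point, but the analysis that follows contains a decisive error. With the paper's conventions the first nonzero entry of the sequence of $N$ is $\alpha_{2q}$ (not $\alpha_q$), so $\beta_i=0$ for $p<i\le 2q-p$ and $\beta_i=1$ for $2q-p<i\le 2q$. The first constituent length of $L(\delta)$ is read off from the \emph{earliest} index at which $\beta_i+\delta\neq 0$; when $q>p$ and $\delta\neq 0$ that index is already $i=p+1$, because $\beta_{p+1}+\delta=\delta\neq 0$. Translation adds $\delta$ to the leading zeros as well, so for $q>p$ the first constituent of $L(\delta)$ has length $2p$ for \emph{every} $\delta\neq 0$ --- not $q+p$ as you claim for $\delta\neq -1$, nor $2q$ for $\delta=-1$. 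Your dichotomy only inspects indices where the window has reached $\alpha_{2q}$, which is never where the first nonzero entry of the translated sequence occurs when $q>p$. (Relatedly, your assertion that the second constituent of $N$ also has length $2q$ is false in general: by the results of \cite{CMN} quoted in the paper, later constituents may have length $2q-p^\beta$; indeed when $q=p$ the second constituent has length $p$.)

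The case you defer to ``a short separate remark'', namely $q=p$, is in fact the entire content of the second assertion. When $N$ has first constituent length $2p$ one has $\beta_{p+1}=1$, so $L(-1)$ satisfies $\beta_{p+1}-1=0$ and its first constituent genuinely lengthens; determining by how much requires knowing which initial patterns of constituent lengths of $N$ (of the form $2p,p^m,2p$ or $2p,p^m,2p-1$, or an early occurrence of a third two-step centralizer) can actually occur, and this is exactly where the paper's proof invokes Step~4.10 and Lemmas~5.1 and~5.4 of \cite{CN} to conclude that $\ell$ equals $2q'$ or $q'+p$ for a \emph{new} power $q'$ of $p$. Note that the power of $p$ in the conclusion of the proposition is a bound variable, generally larger than the parameter of $N$ in this case, so the observation that ``$q+p=2q$ when $q=p$'' does not dispatch it. As written, the proposal gets the generic case wrong and omits the hard case entirely.
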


The values for the first constituent length $\ell$ obtained in Proposition~\ref{prop:translated_subalgebra}
account for part of the possibilities for $\ell$ allowed by Theorem~\ref{thm:first_length}.
The remaining values in Theorem~\ref{thm:first_length}, which are $q<\ell<q+p$, with $q$ a power of $p$ and $\ell$ even,
occur for the exceptional algebras of the family $\mathcal{E}$
(see Theorem~\ref{thm:exceptional} and Section~\ref{sec:length2q}).

\begin{proof}
It is known from~\cite{CMN} that, because $N$ is not metabelian, its first constituent has length $2q$ for some power $q$ of $p$,
and hence $L$ has also first constituent length $2q$.
In terms of the sequence $(\alpha_i)_{i>p}$ of $N$ this means that
its earliest nonzero element is $\alpha_{2q}$, and we may conveniently assume that to equal $1$.
According to Equation~\eqref{eq:ordinary_from_type_1} with $n=p$, the corresponding sequence $(\beta_i)_{i>p}$ for $L$ satisfies
$\beta_i=0$ for $p<i\le 2q-p$ and $\beta_i=1$ for $2q-p<i\le 2q$.
If $q>p$ then any translate $L(\delta)$ with $\delta\neq 0$ has $\beta_{p+1}+\delta\neq 0$,
hence its first constituent has length $2p$.

Thus, we may now assume that $N$ has first constituent length $2p$.
Then according to~\cite[Theorem~5.5]{CMN} any constituent of $N$ has length $2p$, $2p-1$, or $p$,
and the second constituent has length $p$ because of our blanket hypothesis $p>2$.
Any translate $L(\delta)$ with $\delta\neq -1$ will also have first constituent length $2p$,
hence consider $L(-1)$.
Its first constituent length $\ell$ is the smallest integer $\ell>p$ such that $\beta_{\ell-p+1}\neq 1$,
and we may assume such an integer exists, otherwise $L(-1)$ is metabelian with a maximal abelian of codimension $2$ (see Section~\ref{sec:sequence}).

If $\beta_{\ell-p+1}\neq 0$, then $\beta_{\ell-p+1}=\alpha_{\ell}$
marks the earliest occurrence of a two-step centralizer of $N$ different from the first and the second,
and hence $\ell$ equals twice a power of $p$ according to~\cite[Step~4.10]{CN}.
If $\beta_{\ell-p+1}=0$, then $\alpha_{\ell-p}$ is at the end of the last
in the longest sequence of consecutive constituent of $N$ of length $p$ starting with the second,
and no two-step centralizer other than the first and the second has occurred until then.
Thus, $\ell=(m+3)p$, where the sequence of constituent lengths of $N$, and hence of $L$, starts with $2p,p^m,2p$ or $2p,p^m,2p-1$.
According to~\cite[Lemma~5.1]{CN}, in the former case $m=2r-3$, where $r$ is a power of $p$.
Also, according to~\cite[Lemma~5.4]{CN}, in the latter case $m=r-2$, where again $r$ is a power of $p$.
\end{proof}

\begin{rem}\label{rem:L(-1)}
In the case of Proposition~\ref{prop:translated_subalgebra} where a proper translate $L(\delta)$
has first constituent $2q$, with $q>p$ (whence $\delta=-1$, as the proof shows),
according to our Theorem~\ref{thm:length2q}
there is an algebra $M$ of type $1$ such that $L(-1)$ is a graded subalgebra of $M$.
Note that $M$ is not isomorphic to $N$, as their fist constituents have different lengths $2q$ and $2p$.
However, using concepts from~\cite{CMN}, the algebras $N$ and $M$ have isomorphic deflations $N^\downarrow\cong M^\downarrow$.
Consequently, $M$ can be obtained from $N$ by deflation followed by an appropriate inflation.
The fact that a proper translate of a subalgebra of type $p$ of an algebra of type $1$ having first constituent length $2p$,
may be itself a subalgebra of an algebra of type $1$,
highlights some of the complications of the case $q=p$,
which is avoided in Theorem~\ref{thm:classification} thanks to its hypothesis $\ell>4p$.
\end{rem}

One important respect in which our proof of Theorem~\ref{thm:first_length} is simpler than the original proof in~\cite{Sca:thesis}
is that the main part of our argument focuses on the adjoint action of one particular element of the algebra,
as opposed to using a range of relations (which were often motivated by computer calculations).
The resulting greater clarity entails a more compelling explanation for the allowed values of $\ell$ in the conclusion.
To justify this statement and as an introduction to the key argument to be used in the main part of the proof of Theorem~\ref{thm:first_length},
in Subsection~\ref{subsec:main}, we sketch here a proof that the
first constituent of an algebra of type $1$ must have length $2q$ for some power $q$ of the characteristic.
This is a more efficient variation of the original proof given in~\cite{CMN}.
We direct the interested reader to~\cite{Mat:chain_lengths} for further arguments of this flavour.

Thus, consider an algebra $L$ of type $1$, which we may assume uncovered, whence
$L_1$ has a basis given by $e_1$ and $z$ as in Section~\ref{sec:sequence},
and each homogeneous component $L_i$ with $i>1$ is spanned by $e_i=[e_1,z^{i-1}]$,
and $e_1$ is chosen such that $[e_2,e_1]=0$.
Let $\ell$ be the length of the first constituent, which we know to be even from Lemma~\ref{lemma:ell_even}, and means
$[e_i,e_1]=0$ for $0<i<\ell$ but
$[e_{\ell},e_1]\neq 0$.
After replacing $e_1$ with a suitable scalar multiple we may assume $[e_{\ell},e_1]=e_{\ell+1}=[e_{\ell},z]$.
Let also $\ell_2$ be the length of the second constituent, which means
$[e_j,e_1]=0$ for $0<j<\ell_2$ but
$[e_{\ell+\ell_2},e_1]\neq 0$.
It will be convenient to consider $x=z-e_1$, because $[e_1,x^{\ell}]=[e_{\ell},x]=0$.

Then for $0<j<\ell_2$ we have
\begin{align*}
0=[e_j,[e_1,x^{\ell}]]
=
\sum_{i=0}^{\ell}(-1)^i\binom{\ell}{i}
[e_j,x^i,e_1,x^{\ell-i}]
=
(-1)^{\ell-j}\binom{\ell}{\ell-j}
[e_{\ell},e_1,x^j],
\end{align*}
because all Lie products in the sum vanish except for the term with $i=\ell-j$.
Because $[e_{\ell},e_1,x^j]=e_{\ell+j+1}\neq 0$
we infer $\binom{\ell}{j}\equiv 0\pmod{p}$ for $0<j<\ell_2$.
However, an analogue of Lemma~\ref{lemma:constituent_bound} for algebras of type $1$, which can be proved in the same way (see~\cite[Lemma~5]{Mat:chain_lengths}),
would tell us that $\ell_2\ge\ell/2$.
Consequently, we have
$\binom{\ell}{j}\equiv 0\pmod{p}$
for $0<j<\ell/2$.
Now  Lucas' Theorem (which we recall at the beginning of Section~\ref{sec:polynomials}) together with $\ell$ being even (and $p$ odd)
easily implies that $\ell$ equals twice a power of $p$.

One crucial fact to take home from this argument, in view of replicating it for algebras of type $n$ in Section~\ref{sec:first_length_proof},
is that it exploits the adjoint action of $[e_1,x^{\ell}]$, a Lie product of formal degree $\ell+1$, which happens to vanish here.
At the expense of one additional calculation the argument could be rephrased in terms of the generators $e_1$ and $z$ for $L$
(that is, with $e_1$ possibly not centralizing $L_2$, and $z$ not centralizing any $L_i$),
where we would exploit the adjoint action of the element $[e_1,z^{\ell}]=e_{\ell+1}$.
Similarly, the main argument in our proof of Theorem~\ref{thm:first_length}, which we give in Subsection~\ref{subsec:main},
will exploit the action of the element $e_{\ell+1}=[e_p,z^{\ell-p+1}]$ in an algebra of type $p$.

Because constituents in an algebra of type $p$ are considerably more complicated than for algebras of type $1$
(having up to $p$ nonzero trailing entries),
it will be convenient to express the conclusion of that argument in terms of a condition on a certain polynomial,
whose consequences will be studied in Section~\ref{sec:polynomials}.
A somehow similar polynomial framing of the conclusion of the above argument for algebras of type $1$,
which is given in~\cite{Mat:chain_lengths}, would be
that the Lie bracket calculation entails
$(x-1)^{\ell}=x^{\ell}+ax^{\ell/2}+1$ in the polynomial ring $F[x]$, for some $a\in F$.

\section{Polynomial arguments}\label{sec:polynomials}

The main argument in our proof of Theorem~\ref{thm:first_length},
in Subsection~\ref{subsec:main},
will prove the vanishing of a certain range of coefficients in the product
$(x-1)^{\ell-p+1}g(x)$,
where $g(x)$ is a polynomial of degree $p-1$ encoding the last $p$ entries of the first constituent of an algebra of type $p$.
In this section we present a result on polynomials, Theorem~\ref{thm:polynomials},
which under such condition on $(x-1)^{\ell-p+1}g(x)$ restricts the possibilities for $\ell$ to those in the conclusion of Theorem~\ref{thm:first_length}
and a few more, the latter to be excluded later by different calculations in $L$.
In this section only we relax our blanket hypothesis on $p$ being odd.

We will make frequent use of Lucas' theorem, a basic tool for evaluating a binomial coefficient $\binom{a}{b}$ modulo a prime $p$:
if $a$ and $b$ are non-negative integers with $p$-adic expansions $a=a_0+a_1p+\cdots+a_rp^r$ and $b=b_0+b_1p+\cdots+b_rp^r$ (where $0\le a_i,b_i<p$), then
\[
\binom{a}{b}=\prod_{i=0}^{r}\binom{a_i}{b_i} \pmod{p}.
\]
A simple consequence of Lucas' Theorem, which is alone sufficient for many purposes, is that $\binom{a}{b}$ vanishes modulo $p$ if,
and only if, $b_i>a_i$ for some $i$.

Following standard notation we denote by $[x^j] g(x)$ the coefficient of $x^j$ in a polynomial $g(x)$.
Let $F$ be a field of positive characteristic $p$.
All polynomials considered in this section will be viewed as having coefficients in $F$.
For example, if $k$ is a positive integer such that $[x^j] (x-1)^k=0$ for $k/2\le j<k$, then it is easy to deduce that $k$ must be a power of $p$.
In fact, the condition then holds on the range $0<j<k$ because of the binomial identity $\binom{k}{k-j}=\binom{k}{j}$,
and then the conclusion follows from Lucas' Theorem (or alternate arguments).
If the condition is only assumed for the range $k/2<j<k$ then $k$ must be either a power of $p$ or twice a power of $p$
(see~\cite[Lemma~1]{Mat:chain_lengths} for two different proofs).
The following preparatory result gives precise restrictions on the exponent $k$ under the condition that about the upper half of coefficients
of the polynomial $(x-1)^k(x-a) \in F[x]$ vanish,
with the necessary exception of the leading coefficient.
The precise formulation of this condition comes in two variants, differing only when $k$ is odd, both of which will be needed in the proof
of Theorem~\ref{thm:polynomials}.

\begin{lemma}\label{lemma:polynomials}
Let $F$ be a field of characteristic $p$ and let $x-a \in F[x]$. Suppose that, for a natural number $k>1$, we have
\begin{equation}\label{eq:range_lemma}
[x^j](x-1)^k(x-a)=0 \quad\text{for } k/2+1\le j\le k.
\end{equation}
Then the pair $(k,a)$ is one of the following: either $(2,-2)$, or $(3,-3)$, or $(q,0)$, or $(q-1,1)$, or $(2q-1,1)$, where $q$ is a power of $p$.

Moreover, if Equation~\eqref{eq:range_lemma} holds in the
range $(k+1)/2\le j\le k$, then the pairs $(3,-3)$ and $(2q-1,1)$ can be excluded from the conclusion.
\end{lemma}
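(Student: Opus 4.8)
The plan is to convert the coefficient conditions of \eqref{eq:range_lemma} into a linear recursion among binomial coefficients modulo $p$, extract from it that $a\equiv-k$, and then read off the possible values of $k$ from elementary divisibility facts obtained via Lucas' theorem. First I would write $(x-1)^k(x-a)=x(x-1)^k-a(x-1)^k$, so that $[x^j](x-1)^k(x-a)=\pm\bigl(\binom{k}{j-1}+a\binom{k}{j}\bigr)$; hence \eqref{eq:range_lemma} is equivalent to
\[
\binom{k}{i}=-a\binom{k}{i+1}\quad\text{in }F,\qquad \lceil k/2\rceil\le i\le k-1.
\]
Taking $i=k-1$ gives $a=-k$ in $F$ at once. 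Substituting this back and iterating the recursion downward from $\binom kk=1$ yields $\binom ki=k^{\,k-i}$ for $\lceil k/2\rceil\le i\le k$, which after setting $m=k-i$ and using $\binom{k}{k-m}=\binom km$ becomes the clean family of congruences
\[
\binom km\equiv k^m\pmod p\qquad(0\le m\le\lfloor k/2\rfloor).
\]

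The pairs $(2,-2)$ and $(3,-3)$ account for $k=2,3$, where \eqref{eq:range_lemma} is a single equation giving only $a=-k$, so I would assume $k\ge4$ from now on. Then the instance $m=2$ of the displayed congruences reads $\binom k2=k^2$, i.e. $k(k+1)=0$ in $F$, splitting into two cases (which do not overlap, as $\gcd(k,k+1)=1$). If $p\mid k$, then $a\equiv0$, and the congruences say $p\mid\binom km$ for $1\le m\le\lfloor k/2\rfloor$; by the symmetry $\binom km=\binom{k}{k-m}$ this holds for all $0<m<k$, and the standard consequence of Lucas' theorem forces $k$ to be a power $q$ of $p$, giving $(q,0)$.

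If instead $p\mid k+1$, then $a\equiv1$ and $\binom km\equiv(-1)^m$ for $0\le m\le\lfloor k/2\rfloor$. Multiplying the truncated identity $(1+x)^k\equiv\sum_{m\le\lfloor k/2\rfloor}(-1)^mx^m$ by $(1+x)$ gives $(1+x)^{k+1}\equiv1\pmod{x^{\lfloor k/2\rfloor+1}}$, i.e. $p\mid\binom{k+1}{m}$ for $1\le m\le\lfloor k/2\rfloor$. Writing $K=k+1$ and using symmetry once more, this becomes: $p\mid\binom Km$ for all $0<m<K$ if $K$ is odd (forcing $K=q$, hence $k=q-1$), and $p\mid\binom Km$ for all $0<m<K$ with the single possible exception $m=K/2$ if $K$ is even. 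A short base-$p$ digit analysis via Lucas then pins down the even case: two distinct powers $p^{t_1}\neq p^{t_2}$ lying strictly between $0$ and $K$ and with non-vanishing binomial would both have to equal $K/2$, which is impossible, so $K$ has a single nonzero digit $K_t$; then the $m=c\,p^t$ with $1\le c\le K_t-1$ must all equal $K/2$, forcing $K_t\le2$, i.e. $K=p^s$ or $K=2p^s$. This yields $k=q-1$ or $k=2q-1$ (with $p=2$ collapsing $2q$ back to a power of $2$), completing the first part.

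For the ``Moreover'' part, the wider range adds — only when $k$ is odd — the single equation at $i=\lfloor k/2\rfloor$, which by $\binom{k}{\lfloor k/2\rfloor}=\binom{k}{\lceil k/2\rceil}$ reads $(1+a)\binom{k}{\lceil k/2\rceil}=0$. For $(k,a)=(3,-3)$ with $p>3$ this contradicts $a=-3\neq-1$, since $\binom32=3\neq0$; and for $p\in\{2,3\}$ the pair $(3,-3)$ already coincides with $(q-1,1)$ or $(q,0)$, so it may be dropped. For $(k,a)=(2q-1,1)$ with $p$ odd, Lucas gives $\binom{2q-1}{q}\equiv1\pmod p$, so the equation forces $2=0$, which is absurd; hence this pair drops out too. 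I expect the main obstacle to be the bookkeeping in the even-$K$ subcase: getting the exact range of $m$ with $p\mid\binom Km$ right (the off-by-one between $\lfloor k/2\rfloor$ and $\lceil K/2\rceil-1$), running the Lucas digit argument so that it isolates exactly $K=p^s$ and $K=2p^s$, and correctly tracking which of these survive in characteristic $2$ versus odd characteristic; the remaining steps are routine binomial manipulations.
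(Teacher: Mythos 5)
Your proof is correct and follows essentially the same route as the paper's: the top coefficient gives $a=-k$, the next constraint forces $k\equiv 0$ or $-1\pmod p$, and the problem then reduces to the vanishing of the binomial coefficients of $(x-1)^k$ (resp.\ $(x-1)^{k+1}$) on at least the upper half of their range, which Lucas' theorem resolves. The only (cosmetic) differences are that you extract the congruence on $k$ from the $m=2$ term of your iterated recursion rather than directly from $[x^{k-1}]$, you spell out the base-$p$ digit analysis behind ``power of $p$ or twice a power of $p$'' which the paper merely asserts, and you handle the ``Moreover'' exclusions pair by pair instead of via the extended range.
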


The cases $(2,-2)$ and $(3,-3)$ are genuine possibilities because of the polynomials $(x-1)^2(x+2)=x^3-3x+3$ and $(x-1)^3(x+3)=x^4-6x^2+8x-3$.

\begin{proof}
Assume $k>3$, the remaining cases being easy to deal with directly.
The coefficients of $x^k$ and $x^{k-1}$ in the product $(x-1)^k(x-a)$ are $-a-k$ and $ak+\binom{k}{2}$, respectively.
The vanishing of the former yields $a=-k$ (in the field $F$), and when this is substituted in the latter the vanishing of that
implies that $k$ is congruent to either $0$ or $-1$ modulo $p$.
If $k\equiv 0\pmod{p}$, whence $a=0$, Equation~\eqref{eq:range_lemma} is equivalent to
$[x^j](x-1)^k=0$ for $k/2\le j<k$, and so $k$ must be a power of $p$.
If $k\equiv -1\pmod{p}$, whence $a=1$, Equation~\eqref{eq:range_lemma} reads
$[x^j](x-1)^{k+1}=0$ for $k/2+1\le j\le k$,
and so $k+1$ must be either a power of $p$ or twice a power of $p$.
The latter is excluded when the condition's range is extended to
$(k+1)/2\le j\le k$.
\end{proof}

The next result, which is the main goal of this section, is similar to Lemma~\ref{lemma:polynomials} but replaces the factor $x-a$
in its hypothesis with an arbitrary polynomial of degree $p-1$.
The condition on the coefficients is modelled precisely on what will be needed in our proof of Theorem~\ref{thm:first_length} in the next section.

\begin{thm}\label{thm:polynomials}
Let $F$ be a field of positive characteristic $p$ and let $k>p+1$ be a natural number.
Let $g(x)\in F[x]$ be a monic polynomial of degree $p-1$ such that
\begin{equation}\label{eq:range}
[x^j](x-1)^kg(x)=0 \quad\text{for } (k+p)/2\le j<k.
\end{equation}
Then either $p+1<k<2p$, or $2p<k<3p$, or $k=3p+1$, or $k=2q-p+1$, or $q-p<k<q+p$, where $q>p$ is a power of $p$.

Moreover, if $k=2q-p+1$ or $k=q-p+1$, then $g(x)=(x-1)^{p-1}$.
If $k=q-p+k_0$, with $0<k_0<p$, then $(x-1)^{p-k_0}$ divides $g(x)$.
If $k=q+k_0$ with $0<k_0<p$, then $x^{k_0}$ divides $g(x)$.
\end{thm}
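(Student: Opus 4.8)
The plan is to imitate, and then iterate, the reduction behind Lemma~\ref{lemma:polynomials}, exploiting the identity $(x-1)^p=x^p-1$ in $F[x]$. Write $k=k_0+pk_1$ with $0\le k_0<p$ and $k_1\ge1$ (possible since $k>p+1$), put $G(x)=(x-1)^{k_0}g(x)$, a polynomial of degree $k_0+p-1<2p$, and split it as $G=G_0+x^pG_1$ with $\deg G_0\le p-1$ and $\deg G_1=k_0-1$ (so $G_1=0$ when $k_0=0$). Since $(x-1)^kg(x)=(x^p-1)^{k_1}G(x)$ and $(x^p-1)^{k_1}=\sum_m\binom{k_1}{m}(-1)^{k_1-m}x^{pm}$, one gets for $0\le s<p$ the clean formula
\[
[x^{pm+s}]\bigl((x-1)^kg(x)\bigr)=(-1)^{k_1-m}\Bigl(\binom{k_1}{m}[x^s]G_0-\binom{k_1}{m-1}[x^s]G_1\Bigr),
\]
so that Equation~\eqref{eq:range} is equivalent to requiring $\binom{k_1}{m}[x^s]G_0=\binom{k_1}{m-1}[x^s]G_1$ for every $s$ and every $m$ with $(k+p)/2\le pm+s<k$; call this relation $R(m,s)$.

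The core of the argument is a dichotomy on $k_1\bmod p$, generalizing the alternative $k\equiv 0$ or $-1\pmod p$ in the proof of Lemma~\ref{lemma:polynomials}. Fix $s$ with $[x^s]G_0\ne 0$ (such $s$ exists, else $x^p\mid g$). Over a range of $m$ with at least two elements the ratio $\binom{k_1}{m}/\binom{k_1}{m-1}=(k_1+1-m)/m$ is non-constant in $F$ unless $k_1\equiv -1\pmod p$, when it is identically $-1$; hence $R(m,s)$ forces either $k_1\equiv-1\pmod p$, or $\binom{k_1}{m}=0$ throughout a range of $m$ which, after reflecting via $\binom{k_1}{m}=\binom{k_1}{k_1-m}$, covers $0<m<k_1$, so that $k_1$ is a power of $p$ by Lucas' theorem (necessarily $k_1\equiv0\pmod p$, as $k_1\ge3$). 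The finitely many values of $k_1$ for which the available $m$-ranges are too short for this step --- $k_1\le 3$ or so --- are treated by hand: they contribute exactly $(p+1,2p)\cup(2p,3p)\cup\{3p+1\}$ once impossible values such as $k=2p$ are discarded (for $p$ odd the coefficient of $x^{2p-1}$ in $(x-1)^{2p}g$ equals $-2\neq0$), the small-prime anomalies being absorbed into the $q=p^2$ interval below.

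In the case $k_1=p^e$, so $q:=pk_1$ is a power of $p$ with $q>p$, reading $R(k_1,s)$ with $\binom{k_1}{k_1}=1$ and $\binom{k_1}{k_1-1}=k_1\equiv0$ gives $[x^s]G_0=0$ for $0\le s<k_0$; as $\gcd(x,x-1)=1$ this means $x^{k_0}\mid g$, and $k=q+k_0$, as required. In the case $k_1\equiv-1\pmod p$ every $\binom{k_1}{j}$ with $0\le j\le k_1$ is nonzero, so $R(m,s)$ forces $G_1=-G_0$ identically; then $G=-(x-1)^pG_0$, hence $(x-1)^{k_0}g=-(x-1)^pG_0$ and, since $k_0<p$, $g=-(x-1)^{p-k_0}G_0$ with $\deg G_0=k_0-1$ --- in particular $(x-1)^{p-k_0}\mid g$, and $g=(x-1)^{p-1}$ when $k_0=1$. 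Substituting back, $(x-1)^kg=-(x^p-1)^{k_1+1}G_0(x)$ with $\deg G_0<p$, so Equation~\eqref{eq:range} becomes the pure condition that $\binom{k_1+1}{M}=0$ over a range of $M$ --- exactly the degree-zero situation recalled before Lemma~\ref{lemma:polynomials} --- whence $k_1+1$ is a power of $p$ or twice a power of $p$. The first alternative yields $k=q-p+k_0$ with $q$ a power of $p$ exceeding $p$ (and $g=(x-1)^{p-1}$ when $k_0=1$). In the second, writing $(x^p-1)^{k_1+1}=(x^{q}-1)^2$ with $q=p^{e+1}$ and inspecting the surviving coefficients of $-(x^{q}-1)^2G_0(x)$ forces $\deg G_0=0$, hence $k_0=1$, $g=(x-1)^{p-1}$ and $k=2q-p+1$. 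Collecting all cases gives the stated list together with the divisibility properties of $g$.

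The delicate point, and the main obstacle, is the precise location of the $m$-ranges entering $R(m,s)$: the lower endpoint $\lceil((k+p)/2-s)/p\rceil$ depends on $s$ and sits within $1$ of $k_1/2$, so one must track carefully --- according to the parities of $k$, $k_1$ and $p$ and to which coefficients of $G_0$ (and $G_1$) are nonzero --- whether the reflected range really covers every $m$ with $0<m<k_1$, forcing an exact power of $p$, or narrowly misses the middle, admitting the ``twice a power'' option that ultimately produces $k=2q-p+1$. This same bookkeeping has to absorb the small primes, notably $p=3$ where the intervals $(2p,3p)$ and $(q-p,q+p)$ with $q=p^2$ overlap, and the case $p=2$, for which this section has dropped the blanket oddness assumption.
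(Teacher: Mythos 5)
Your setup is essentially the paper's: writing $k=k_0+pk_1$ and using $(x-1)^{pk_1}=(x^p-1)^{k_1}$ to turn Equation~\eqref{eq:range} into conditions linking $\binom{k_1}{m}$, $\binom{k_1}{m-1}$ and the coefficients of $(x-1)^{k_0}g(x)$ in each residue class mod $p$; your relation $R(m,s)$ is correct, and your treatment of the branch ``$k_1$ a power of $p$'' (giving $x^{k_0}\mid g$) and of the endgame in the branch $G_1=-G_0$ (giving $(x-1)^{p-k_0}\mid g$ and the value $k=2q-p+1$) are sound. But the decisive step --- the dichotomy that every $k_1$ either is a power of $p$ or satisfies $k_1\equiv-1\pmod p$ with $G_1=-G_0$ --- is not actually proved. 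First, the assertion that $k_1\equiv-1\pmod p$ forces every $\binom{k_1}{j}$ to be nonzero is false: take $p=3$ and $k_1=11=2+0\cdot3+1\cdot9$, where $\binom{11}{3}\equiv 0$ by Lucas. For such $k_1$ the coefficients $\binom{k_1}{m}$ both vanish and fail to vanish inside the relevant range of $m$, and the correct outcome is a contradiction ($G_0=0$, impossible for monic $g$ of degree $p-1$) rather than either horn of your dichotomy; your case analysis as written does not produce this third outcome, so exhaustiveness fails. Second, the step you yourself flag as ``the delicate point'' --- verifying that the range of valid $m$, reflected via $\binom{k_1}{m}=\binom{k_1}{k_1-m}$, covers all of $0<m<k_1$ or misses only the middle --- is exactly where the values $q$, $q-p+k_0$, $2q-p+1$ get separated and where $3p+2,\dots,4p-1$ get excluded; deferring it means the theorem's list of admissible $k$ has not been derived.

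For comparison, the paper sidesteps both difficulties by looking at a single residue class, $s=k_0-1$: since $(x-1)^{k_0}g(x)$ is monic of degree $p+k_0-1$, that class contains exactly two of its coefficients (one of them equal to $1$), so the hypothesis collapses to $[x^j](x-1)^{k'}(x-a)=0$ on roughly the upper half-range for a \emph{single explicit scalar} $a$, which is precisely the situation of Lemma~\ref{lemma:polynomials}; the two variants of that lemma (ranges starting at $k'/2+1$ versus $(k'+1)/2$) encode exactly the parity bookkeeping you postpone, and its short proof (vanishing of the top two coefficients forces $a=-k'$ and $k'\equiv 0$ or $-1\bmod p$) replaces your ratio argument without ever needing all $\binom{k_1}{j}$ to be nonzero. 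If you want to salvage your all-classes approach, you must (i) replace the false nonvanishing claim by an argument that the propagation $\binom{k_1}{m}=0\iff\binom{k_1}{m-1}=0$ along the valid range, anchored at $\binom{k_1}{k_1}=1$ or at $\binom{k_1}{k_1-1}=k_1$, either forces total vanishing (then do the reflection count precisely) or total nonvanishing (then derive $k_1\equiv-1$ from constancy of the ratio across two residues of $m$), and (ii) actually carry out the reflection count, including the case where the covered set misses a single middle index.
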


Our hypothesis $k>p+1$ in Theorem~\ref{thm:polynomials} is due to the condition expressed in Equation~\eqref{eq:range} being void otherwise.

\begin{proof}
Write $k=k'p+k_0$ with $0\le k_0<p$, and write $g(x)=\sum_{i=0}^{p-1}g_ix^i$, whence $g_{p-1}=1$.
For a polynomial $f(x)\in F[x]$, and an integer $i$,
denote by $S_i\bigl(f(x)\bigr)$ the polynomial obtained from $f(x)$ by discarding all terms
where $x$ appears with an exponent not congruent to $i$ modulo $p$.
In particular, we have $f(x)=\sum_{i=0}^{p-1}S_i\bigl(f(x)\bigr)$.

We first deal with the simplest case, which is when $k_0=0$, that is, $k$ is a multiple of $p$.
Then we have
\[
(x-1)^{k}g(x)=(x^p-1)^{k'}g(x),
\]
and hence $S_i\bigl( (x-1)^{k}g(x) \bigr)=(x^p-1)^{k'}g_ix^i$ for each $i$.
Taking $i=p-1$ our hypothesis yields
\begin{equation*}
[x^j](x-1)^{k'}=0 \quad\text{for } k'/2\le j<k',
\end{equation*}
where the inequality on the left is due to $p-1\ge p/2$.
As discussed earlier this forces $k'$ to be a power of $p$,
and hence $k$ as well, say $k=q>p$.
Clearly we cannot get any information on $g(x)$ in this case.

Now suppose $k_0=1$. Then
\[
(x-1)^{k}g(x)=(x^p-1)^{k'}(x-1)g(x),
\]
and hence
$S_i\bigl( (x-1)^{k}g(x) \bigr)=(x^p-1)^{k'}(g_{i-1}-g_i)x^i$ for $0<i<p$,
and
$S_0\bigl( (x-1)^{k}g(x) \bigr)=(x^p-1)^{k'}(x^p-g_0)$.
For $i=0$ Equation~\eqref{eq:range} implies
\begin{equation*}
[x^j](x-1)^{k'}(x-g_0)=0 \quad\text{for }k'/2+1\le j\le k',
\end{equation*}
because precisely these values of $j$ satisfy $(k'p+1+p)/2\le jp<k'p+1$.
Because of our assumption $k>p+1$ we have $k'>1$.
Lemma~\ref{lemma:polynomials} shows that
$(k',g_0)$ equals either $(2,-2)$, or $(3,-3)$, or $(q',0)$, or $(q'-1,1)$, or $(2q'-1,1)$, for some power $q'\ge p$ of $p$.
Consequently, $k$ can only assume one of the following values: $k=2p+1$, $k=3p+1$, $k=q+1$, $k=q-p+1$, or $k=2q-p+1$, where $q=q'p>p$.

We obtain additional information on $g(x)$ when $k_0=1$, that is, when $k=q-p+1$ or $k=2q-p+1$.
In fact, for each $0<i<p$, Equation~\eqref{eq:range} implies
\begin{equation*}
[x^j](g_{i-1}-g_i)(x^p-1)^{k'}=0 \quad\text{for }(k'+1)/2\le j<k',
\end{equation*}
because these values of $j$ satisfy $(k'p+1+p)/2\le jp+i<k'p+1$.
The lower end of the range can be extended for specific values of $i$,
but we need not be precise here because it will suffice to look at $j=k'-1$.
In fact, for this value of $j$ we find $g_i=g_{i-1}$, hence starting from $g_0=1$ we inductively find $g_i=1$ for all $i$.
In conclusion, we find $g(x)=(x-1)^{p-1}$.

More generally, suppose now $k=k'p+k_0$, where $1<k_0<p$. Then
\begin{equation*}
(x-1)^k g(x)= (x^p-1)^{k'} (x-1)^{k_0} g(x),
\end{equation*}
whence
\begin{align*}
S_{k_0-1}((x-1)^k g(x)) &=(x^p-1)^{k'} \biggl(x^p+ \sum_{s=0}^{k_0-1} (-1)^{k_0-s} \binom{k_0}{s}g_{k_0-1-s} \biggr)x^{k_0-1}.
\end{align*}
Equation~\eqref{eq:range} implies
\begin{equation*}
[x^j] (x-1)^{k'} (x-a)=0
\quad\text{for } (k'+1)/2 \le j \le k',
\end{equation*}
where
$a=\sum_{s=0}^{k_0-1}(-1)^{k_0+1-s} \binom{k_0}{s}g_{k_0-1-s}$,
because these values of $j$ satisfy $(k'p+k_0+p)/2 \le jp+k_0-1 < k'p+k_0$.
Lemma~\ref{lemma:polynomials} then tells us that $k'$ can only equal $1$, $2$, $q'$ or $q'-1$, where $q'>1$ is a power of $p$.
(It also tells us corresponding values for $a$, which we will not need here.)
Consequently, either $k=p+k_0$ or $k=2p+k_0$ or $k=q+k_0$ or $k=q-p+k_0$, where $q=q'p>p$.

It remains to prove the additional information on $g(x)$ stated in Theorem~\ref{thm:polynomials} for the cases where $q-p<k<q+p$.
Clearly, no information can be extracted from Equation~\eqref{eq:range} when $k=q$.
The higher half of the range is much easier to deal with.
In fact, if $k=q+k_0$ with $0<k_0<p$,
because $(x-1)^k=(x^{q}-1)(x-1)^{k_0}$
Equation~\eqref{eq:range} implies
\[
[x^j](x-1)^{k_0} g(x)=0 \quad\text{for } 0 \le j< k_0.
\]
Because $x^{k_0}$ and $(x-1)^{k_0}$ are coprime it follows that $x^{k_0}$ divides $g(x)$, as desired.

Now assume $k=q-p+k_0$ with $0<k_0<p$.
(The case $k_0=1$ was proved earlier, but the following argument includes it.)
Here Equation~\eqref{eq:range} reads
\begin{equation*}
[x^j] (x-1)^{q-p}(x-1)^{k_0}g(x)=0 \quad\text{for } (q+k_0)/2\le j<q-p+k_0.
\end{equation*}
Expanding $(x-1)^{q-p}$ as $x^{q-p}+x^{q-2p}+\cdots+x^p+1$,
and restricting the range of $j$ to an interval where only the first two terms of the expansion matter, as long as $q-2p+k_0\ge (q+k_0)/2$ we infer
\begin{equation*}
[x^j] x^{q-2p}(x^p+1)(x-1)^{k_0}g(x)=0 \quad\text{for } q-2p+k_0\le j < q-p+k_0,
\end{equation*}
which amounts to
\[
[x^j] (x^p+1)(x-1)^{k_0}g(x)=0 \quad\text{for } k_0\le j < p+k_0.
\]
The assumption $q-2p+k_0\ge (q+k_0)/2$ holds except when the pair $(p,k)$ equals $(2,3)$, $(3,7)$, or $(3,8)$.
The first of those three possibilities is excluded by our hypothesis $k>p+1$, and
the desired conclusion is easy to obtain by inspection in the other two cases.
If we further restrict the range of $j$ in the above equation, looking only at terms of degree less than $p$, we find
\begin{equation*}
[x^j] (x-1)^{k_0}g(x)=0 \quad\text{for } k_0\le j < p.
\end{equation*}
Hence $(x-1)^{k_0}g(x)=x^pa(x)+b(x)$, with $a(x)$ and $b(x)$ polynomials of degree less than $k_0$.
Because $1$ is a root of $x^pa(x)+b(x)$ with multiplicity at least $k_0$, the derivatives
of this polynomial up to order $k_0-1$ also have $1$ as a root.
Since $x^p$ has zero derivative the analogous conclusion holds for the corresponding derivatives of $a(x)+b(x)$.
Because $k_0<p$ this implies that $a(x)+b(x)$ has $1$ as a root with multiplicity at least $k_0$, which exceeds its degree,
whence $a(x)+b(x)$ is the zero polynomial.
Consequently,
$(x-1)^{k_0}g(x)=x^pa(x)-a(x)=(x-1)^p a(x)$,
and the desired conclusion follows.
\end{proof}

\section{Proof of Theorem~\ref{thm:first_length}}\label{sec:first_length_proof}

Let $L$ be an algebra of type $p$, hence generated by $z$ and $e_p$ with the usual meaning, and with first constituent length $\ell>4p$.
Our main argument, in Subsection~\ref{subsec:main}, will use information on the first and second constituent,
together with Lemma~\ref{lemma:constituent_bound},
to show that the last $p$ entries of the first constituent satisfy a linear recurrence (in their range).
This can be interpreted as a condition on the coefficients of a certain polynomial which will allow an application of Theorem~\ref{thm:polynomials}.
However, the conclusions of this argument will allow for some values for $\ell$, namely $q+p<\ell<q+2p-1$,
which are not genuine possibilities, and require further calculations to exclude.
We do that in the second part of the proof, which forms Subsection~\ref{subsec:spurious} and is considerably more complicated.
Subsection~\ref{subsec:further_info} goes beyond the proof of Theorem~\ref{thm:first_length}
to extract some additional information from Theorem~\ref{thm:polynomials} which will be of later use.

\subsection{The main argument}\label{subsec:main}
For convenience we state Equations~\eqref{eq:first_const}--\eqref{eq:second_const_end},
which encode the first and second constituent of $L$ (including the normalization $\beta_{\ell-n+1}=1$),
with $p$ in place of $n$:
\begin{subequations}
\begin{align}
[e_i, e_p]&=0\quad\text{for }p<i\le\ell-p,
 \label{eq:first_const}\\
[e_{\ell-p+1},e_p]&=e_{\ell+1},
 \label{eq:first_const_end}\\
[e_{\ell+j},e_p]&=0\quad\text{ for }0<j\le \ell_2-p,
 \label{eq:second_const}\\
[e_{\ell+\ell_2-p+1},e_p]&=\beta_{\ell+\ell_2-p+1}e_{\ell+\ell_2+1}\ne 0.
 \label{eq:second_const_end}
\end{align}
\end{subequations}
Note that at this stage we have no information on
$[e_i, e_p]$ for $\ell-p+1<i\le\ell$
(apart from it being a scalar multiple of $e_{p+i}$)
as we do not know the corresponding entries $\beta_{\ell-p+2},\ldots,\beta_{\ell}$ of the first constituent.
We will use Equations~\eqref{eq:first_const}--\eqref{eq:second_const_end}
to obtain information on the adjoint action of $e_{\ell+1}$.
(This loosely plays the role of the relation $[e_1,x^\ell]=0$ used for the case of algebras of type $1$,
as we sketched at the end of Section~\ref{sec:type_p}.)

Our first observation is that $e_{\ell+1}$ centralizes $e_{j}$ for $p\le j<\ell_2$, namely,
\[
[e_{\ell+1},e_{j}]
=[e_{\ell+1},[e_p,z^{j-p}]]
=\sum_{i=0}^{j-p} (-1)^i\ \binom{j-p}{i} [e_{\ell+1+i}, e_p, z^{j-p-i}]
=0,
\]
because $p+i\le j<\ell_2$.
Consequently, after interchanging the entries of $[e_{\ell+1},e_{j}]$ and expanding by the generalized Jacobi identity we find
\begin{align*}
0=[e_{j},[e_p,z^{\ell-p+1}]]
=\sum_{i=0}^{\ell-p+1} (-1)^i \binom{\ell-p+1}{i} [e_{j+i}, e_p, z^{\ell-p+1-i}].
\end{align*}
Now note that $j\le j+i\le j+\ell-p+1\le\ell+\ell_2-p$.
Hence, all Lie products involved in the above summation vanish except possibly for those with
$\ell-p<j+i\le\ell$,
that is,
$\ell-p-j<i\le\ell-j$.
We conclude
\begin{equation*}
0= \sum_{i=0}^{p-1} (-1)^{j+i} \binom{\ell-p+1}{\ell-j-i} [e_{\ell-i}, e_p, z^{j-p+1+i}],
\end{equation*}
whence
\begin{equation*}\label{linear recurrence}
\sum_{i=0}^{p-1} (-1)^{j+i} \binom{\ell-p+1}{\ell-j-i} \beta_{\ell-i}=0
\qquad \text{for } p\le j<\ell_2.
\end{equation*}
The left-hand side equals the coefficient of $x^{\ell-j}$ in the product $(1-x)^{\ell-p+1}g(x)$, where
$g(x)=\beta_{\ell-p+1}x^{p-1}+\beta_{\ell-p+2}x^{p-2}+\cdots+\beta_{\ell}$,
a polynomial of degree $p-1$, which is actually monic due to our normalization $\beta_{\ell-p+1}=1$.
Thus, we have found that $g(x)$ satisfies
\begin{equation}\label{fromLinRecurToZeroCoef}
[x^j] (x-1)^{\ell-p+1} g(x)=0 \qquad \text{for } \ell-\ell_2<j\le\ell-p.
\end{equation}
Because $\ell_2\ge \ell/2$ according to Lemma~\ref{lemma:constituent_bound}, and setting $k=\ell-p+1$, this implies
\begin{equation*}
[x^j] (x-1)^k g(x)=0 \text{\quad for } (k+p+1)/2\le j<k.
\end{equation*}
Because $k$ is even according to Lemma~\ref{lemma:ell_even}, the range is equivalent to $(k+p)/2\le j<k$,
and so this condition is equivalent to Equation~\eqref{eq:range} in Theorem~\ref{thm:polynomials}.
In addition, here $k>3p+1$ because of our hypothesis $\ell>4p$.
Thus, Theorem~\ref{thm:polynomials} yields that
either $k=2q-p+1$ or $q-p<k<q+p$, where $q>p$ is a power of $p$.
Because $k$ is odd the latter range is equivalent to $q-p+2\le k\le q+p-2$.
Thus, in terms of $\ell=k+p-1$ we conclude that either $\ell=2q$ or $q+1\le\ell\le q+2p-3$.

\subsection{Excluding spurious possibilities for $\ell$}\label{subsec:spurious}
In order to complete a proof of Theorem~\ref{thm:first_length} it remains to rule out the possibilities $q+p<\ell\le q+2p-3$
for the length of the first constituent,
which means $q+3\le k\le q+p-2$ in terms of $k$.

Thus, assume $k=q+k_0$, where $q>p$ is a power of $p$ and $k_0$ is an odd integer with $1<k_0<p$.
This easily implies $\ell_2\le q$ using Equation~\eqref{fromLinRecurToZeroCoef}, because $(x-1)^k=(x^q-1)(x-1)^{k_0}$ and hence
\[
[x^{k_0+p-1}](x-1)^k g(x)=-[x^{k_0+p-1}](x-1)^{k_0}g(x)=-1\neq 0.
\]
With some harder work we will show that this implies $\ell_2=q$ or $q-1$,
and then show how they lead to a contradiction.

Theorem~\ref{thm:polynomials} provides further information on $g(x)$, namely, that it is a multiple of $x^{k_0}$.
That means
\begin{equation*}
\beta_{q+p}=\cdots=\beta_{q+k_0+p-1}=0,
\end{equation*}
or, in words, that the last $k_0$ entries of the first constituent vanish.
Put differently, under our assumption on $\ell$, Equation~\eqref{eq:second_const} holds over an extended range, namely,
\begin{equation}\label{eq:second_const_plus}
[e_{\ell+j},e_p]=0\quad\text{ for }-k_0<j\le \ell_2-p.
\end{equation}
Let $s<p$ be largest such that $\beta_{q+s}\neq 0$.
By assumption we have $s\ge k_0$.
Recall that the first non-zero entry of the second constituent is $\beta_{q+k_0+\ell_2}$.

Because of our hypothesis $\ell>4p$, and because we know $\ell_2\ge\ell/2$ from Lemma~\ref{lemma:constituent_bound},
we have $\ell_2+k_0\ge s+p$.
Hence we may write
$e_{\ell_2+k_0-s}=[e_{\ell_2+k_0-s-p},e_p]$, and compute
\begin{align*}
[e_{\ell_2+k_0-s},e_{q+s}]
&=-[e_{q+s},e_{\ell_2+k_0-s}]
\\&=
-\sum_{j=0}^{\ell_2+k_0-s-p} (-1)^j \binom{\ell_2+k_0-s-p}{j}[e_{q+s+j},e_p,z^{\ell_2+k_0-s-p-j}]
\\&=
[e_{q+s},e_p,z^{\ell_2+k_0-s-p}]
=-\beta_{q+s} e_{q+\ell_2+k_0},
\end{align*}
where all remaining terms in the summation vanish because of our choice of $s$.
Consequently,
\begin{align*}
\beta_{q+s}[e_{\ell_2+k_0-s},e_{q+s+p}]
&=
[e_{\ell_2+k_0-s},[e_{q+s},e_p]]
\\&=
[e_{\ell_2+k_0-s},e_{q+s},e_p]-[e_{\ell_2+k_0-s},e_p,e_{q+s}]
\\&=
[e_{\ell_2+k_0-s},e_{q+s},e_p]
\\&=
-\beta_{q+s}[e_{q+\ell_2+k_0},e_p],
\end{align*}
where the second term in the second line vanishes because $\ell_2+k_0-s\le q<q+k_0$,
due to $\ell_2\le q$ as we proved earlier.
Because $\beta_{q+s}\neq 0$ we deduce
\begin{equation}\label{eq:s_even}
[e_{\ell_2+k_0-s},e_{q+s+p}]
=-[e_{q+\ell_2+k_0},e_p]\neq 0.
\end{equation}
However, we also have
\begin{align*}
[e_{\ell_2+k_0-s},e_{q+s+p}]
&=
\sum_{j=0}^{q+s} (-1)^j \binom{q+s}{j}[e_{\ell_2+k_0-s+j},e_p,z^{q+s-j}]
\end{align*}
The binomial coefficients in the above summation vanish modulo $p$ except for the terms with $0\le j\le s$ or $q\le j\le q+s$.
Because of the length of the second constituent all terms in the latter range vanish except when $j=q+s$.
Because $\ell_2\le q$, in the former range we have $\ell_2+k_0-s+j<q+k_0$, whence the corresponding Lie product vanishes,
except possibly for $j=s$ in case $\ell_2=q$.
Thus,
\[
[e_{\ell_2+k_0-s},e_{q+s+p}]
=
(-1)^s[e_{\ell_2+k_0},e_p,z^{q}]
+(-1)^{q+s}[e_{q+\ell_2+k_0},e_p].
\]
Combined with Equation~\eqref{eq:s_even}, if $s$ is odd this yields
\[
[e_{\ell_2+k_0},e_p,z^{q}]
=
2[e_{q+\ell_2+k_0},e_p]\neq 0,
\]
whence $\ell_2=q$ as observed above.

To deal with the case where $s$ is even we need a slightly different calculation, in degree one higher.
Similar calculations as in the previous case yield
\[
[e_{\ell_2+k_0-s+1},e_{q+s}]
=-[e_{q+s},e_{\ell_2+k_0-s+1}]
=-\beta_{q+s} e_{q+\ell_2+k_0+1},
\]
and then
\[
\beta_{q+s}[e_{\ell_2+k_0-s+1},e_{q+s+p}]
=
[e_{\ell_2+k_0-s+1},e_{q+s},e_p]
=
-\beta_{q+s}[e_{q+\ell_2+k_0+1},e_p],
\]
whence
\begin{equation}\label{eq:s_odd}
[e_{\ell_2+k_0-s+1},e_{q+s+p}]
=-[e_{q+\ell_2+k_0+1},e_p]\neq 0.
\end{equation}
We also have
\begin{align*}
[e_{\ell_2+k_0-s+1},e_{q+s+p}]
&=
\sum_{j=0}^{q+s} (-1)^j \binom{q+s}{j}[e_{\ell_2+k_0-s+1+j},e_p,z^{q+s-j}]
\end{align*}
As in the previous case the binomial coefficients in the above summation vanish modulo $p$ except for the terms with $0\le j\le s$ or $q\le j\le q+s$.
However, here Lie products in the latter range vanish except when $j=q+s-1$ and possibly $q+s$.
Also, Lie products in the former range vanish as long as $\ell_2+k_0-s+1+j<q+k_0$.
This holds with the possible exceptions of $j=s-1$ or $s$, which may only arise when $\ell_2=q$ (in both cases) or $q-1$ (in the latter case).
Thus,
\begin{align*}
[e_{\ell_2+k_0-s+1},e_{q+s+p}]
&=
-(-1)^{s}s[e_{\ell_2+k_0},e_p,z^{q+1}]
+(-1)^s[e_{\ell_2+k_0+1},e_p,z^{q}]
\\&\quad
+(-1)^{s}s[e_{q+\ell_2+k_0},e_p,z]
-(-1)^{s}[e_{q+\ell_2+k_0+1},e_p].
\end{align*}
Combined with Equation~\eqref{eq:s_odd}, if $s$ is even this yields
\[
0\neq s[e_{q+\ell_2+k_0},e_p,z]
=
s[e_{\ell_2+k_0},e_p,z^{q+1}]
-[e_{\ell_2+k_0+1},e_p,z^{q}],
\]
whence at least one of the terms at the right-hand side is nonzero,
implying $\ell_2=q$ or $\ell_2=q-1$, as claimed.

We will now use these conclusions to reach a contradiction.
Because of Equation~\eqref{eq:first_const} we have
\[
[e_{q+k_0-1},e_{q+1},e_p]=[e_{q+k_0-1}, e_p, e_{q+1}]-[e_ {q+1},e_p,e_{q+k_0-1}]=0,
\]
and hence
\begin{align*}
0=[e_{q+k_0-1},e_{q+1},e_p]=\sum_{j=0}^{q+1-p}(-1)^j \binom{q+1-p}{j}[e_{q+k_0-1+j},e_p,z^{q+1-p-j},e_p].
\end{align*}
The Lie products involved in this summation vanish for $j=0$ because of Equation~\eqref{eq:first_const},
and for $j>p-k_0$ because of Equation~\eqref{eq:second_const_plus} together with $\ell_2\ge q-1>q-p$.
However, the binomial coefficients in the remaining range vanish modulo $p$ except when $j=1$, and we conclude
\[
0=[e_{q+k_0},e_p,z^{q-p},e_p ]=[e_{2q+k_0},e_p].
\]
This shows $\ell_2\neq q$, and hence $\ell_2=q-1$.

To proceed further, note that $[e_{q+k_0+1},e_{q-1},e_p]=0$, because $[e_{q+k_0+1},e_{q-1}]$ is a multiple of $e_{2q+k_0}$.
Together with Equation~\eqref{eq:first_const} this yields
\[
[e_{q+k_0+1},e_p,e_{q-1}]=[e_{q+k_0+1},e_{q-1},e_p]+[e_{q-1},e_p,e_{q+k_0+1}]=0.
\]
Expanding the left-hand side as usual we find
\[
0
=\beta_{q+k_0+1}\sum_{j=0}^{q-1-p} (-1)^j \binom{q-1-p}{j} [e_{q+k_0+1+p+j},e_p,z^{q-1-p-j}],
\]
and
$\beta_{q+k_0+1}=(k_0+1)/2\cdot\beta_{q+k_0}\neq 0$
according to Equation~\eqref{eq:beta}.
The Lie products involved in this summation vanish
for $j<q-2-p$ because of Equation~\eqref{eq:second_const_plus} together with $\ell_2=q-1$.
The two remaining terms yield
\[
0=[e_{2q+k_0-1},e_p,z]+[e_{2q+k_0},e_p].
\]
We have already shown that the latter vanishes, hence the former vanishes as well, contradicting $\ell_2=q-1$.
This final contradiction shows that $q+p<\ell<q+2p-1$ as assumed in this subsection cannot occur,
and thus completes the proof of Theorem~\ref{thm:first_length}.

\subsection{Further information on the first constituent}\label{subsec:further_info}
According to the argument in the main part of the above proof, in Subsection~\ref{subsec:main},
Theorem~\ref{thm:polynomials} provides further information on $g(x)$ in the various cases for $\ell$.
In particular, when $\ell=2q$ we have $g(x)=(x-1)^{p-1}=\sum_{i=0}^{p-1}x^i$.
This says that in this case $\beta_{2q-p+1}= \cdots=\beta_{2q}=1$, hence the first constituent is ordinary.
This fact will be essential for our proof of Theorem~\ref{thm:length2q}, in the next section.
Moreover, if $\ell=q+p$ then the first constituent ends with a zero (that is, $\beta_{\ell}=0$).
Finally, if $q<\ell<q+p$ then $g(x)$ is a multiple of $(x-1)^{p-k_0}=(x-1)^{q+p-\ell-1}$.
This gives us partial information on the first constituent of the algebras in Scarbolo's exceptional family $\mathcal{E}$,
which we describe in Section~\ref{sec:exceptional}.

\section{Algebras with first constituent of length $2q$}\label{sec:length2q}

In this section we study Lie algebras of type $p$ for which $\ell=2q$
and we prove that they are subalgebras of Lie algebras of type $1$, thus proving Theorem \ref{thm:length2q}.
In order to do so, it suffices to prove that every constituent is ordinary,
because the desired conclusion will then follow from Proposition~\ref{OrdinaryEnding}.
We know from Subsection~\ref{subsec:further_info} that the first constituent is ordinary,
and we proceed inductively.
Thus, assume that the $(r-1)$th constituent is ordinary,
and recall from Lemma~\ref{lemma:constituent_bound} that no constituent length is less than $q$.

Set $s=\ell+\ell_2+\cdots+\ell_{r-1}$.
Summarizing our hypotheses, for the first constituent we have
$\beta_{2q-p+1}= \cdots=\beta_{2q}=1$,
which means
$[e_{2q-p+j},e_p]=e_{2q+j}$ for $0<j\le p$,
and for the $(r-1)$th constituent
$\beta_{s-p+1}=\cdots=\beta_{s}=\lambda$, for some nonzero $\lambda$.
We will prove a corresponding claim for the $r$th constituent. Write
\[
\beta_{s+\ell_r-p+1}=\lambda_1, \ \beta_{s+\ell_r-p+2}=\lambda_2, \ldots,\ \beta_{s+\ell_r}=\lambda_p
\]
for convenience.
Our goal is showing that these $p$ scalars are all equal.
We will need to distinguish two cases according as whether $\ell_r$ equals $q$ or is larger.

Assume first $\ell_r=q$.
For $0<j\le p$ we have
\begin{equation*}
[e_{s-p},e_{2q+j}]
=[e_{s-p},[e_{2q-p+j},e_p]]
=[e_{s-p},e_{2q-p+j},e_p]-[e_{s-p},e_p,e_{2q-p+j}].
\end{equation*}
The first Lie product of the difference at the right-hand side is a scalar multiple of $[e_{s+2q-2p+j},e_p]$, and hence vanishes for $0<j<p$ because $\ell_{r+1}\ge q$.
The second Lie product of the difference vanishes because $\beta_{s-p}=0$.
Consequently, for $0<j<p$ we have
\begin{align*}
0=[e_{s-p},e_{2q+j}]
&=\sum_{i=0}^{2q-p+j}(-1)^i \binom{2q-p+j}{i}
[e_{s-p+i},e_p,z^{2q-p+j-i}]
\\&=\sum_{i=0}^{2q-p+j}(-1)^i \binom{2q-p+j}{i}
\beta_{s-p+i}
e_{s+2q-p+j}.
\end{align*}
Because $\beta_{s-p+i}=0$ for $i=0$, or $p<i\le q$,
or $q+p<i\le 2q$ (due to $\ell_{r+1}\ge q$), and separating the two remaining portions of the summation range, we find
\begin{align*}
0&=\sum_{i=1}^p (-1)^i \binom{2q-p+j}{i} \lambda- \sum_{i=q+1}^{q+p} (-1)^i \binom{2q-p+j}{i} \lambda_{i-q}
\\&=
\lambda \sum_{i=1}^j (-1)^i \binom{j}{i}+\lambda- \sum_{i=1}^j (-1)^i \binom{j}{i} \lambda_i-\lambda_p
\\&=
-\sum_{i=1}^j (-1)^i \binom{j}{i} \lambda_i-\lambda_p,
\end{align*}
for $0<j<p$,
where we have applied Lucas' theorem in the second line, and the identity
$\sum_{i=1}^j (-1)^i \binom{j}{i}=-1$ in the third line.
Now an easy induction on $j$ yields
$\lambda_1=\cdots=\lambda_{p-1}=\lambda_p$,
which amounts to the desired conclusion that the $r$th constituent is ordinary.

Assuming now $\ell_r>q$, we need a different argument.
We first show that this implies $\ell_r>q+p$.
We preliminarily note that for $0\le j\le p$ we have
\begin{align*}
[e_{s-p+1},e_{q+j}]
&=
\sum_{i=0}^{q-p+j} (-1)^i \binom{q-p+j}{i}[e_{s-p+1+i},e_p,z^{q-p+j-i}]
\\&=
\lambda\sum_{i=0}^{j} (-1)^i \binom{q-p+j}{i}e_{s+q-p+1+j}
=0,
\end{align*}
where we have used that $\beta_{s-p+1+i}=0$ for $p\le i\le q$ (including $q$ due to $\ell_r>q$), then Lucas' theorem, and finally the identity
$\sum_{i=0}^j (-1)^i \binom{j}{i}=0$.
Now let the positive integer $j$ be smallest such that $[e_{s+q-p+1+j},e_p]\neq 0$, and suppose $j\le p$ for a contradiction.
We have
\begin{align*}
0=[e_{s-p+1},e_{q+j},e_p]
&=
[e_{s-p+1},e_p, e_{q+j}]-[e_{s-p+1},[e_{q+j},e_p]]
\\&=
\lambda[e_{s+1},e_{q+j}]
\\&=
(-1)^j\lambda[e_{s+q-p+1+j},e_p],
\end{align*}
where in the second line we have used $[e_{q+j},e_p]$ because $q+j<2q-p+1$,
and in the third line we have used the generalized Jacobi identity and the minimality of $j$.
This contradiction shows $\ell_r>q+p$, as desired.

Now we proceed to show that the $r$th constituent is ordinary.
The next calculation is similar to that for the case $\ell=q$, but with $e_{s-p}$ replaced by $e_{s+\ell_r-q-j}$.
Thus, for $0<j\le p$ we have
\begin{align*}
[e_{s+\ell_r-q-j},e_{2q+j}]
&=
[e_{s+\ell_r-q-j},[e_{2q-p+j},e_p]]
\\&=
[e_{s+\ell_r-q-j},e_{2q-p+j},e_p]-[e_{s+\ell_r-q-j},e_p,e_{2q-p+j}].
\end{align*}
The first Lie product of the difference in the second line is a scalar multiple of $[e_{s+\ell_r+q-p},e_p]$, and hence vanishes because $\ell_{r+1}\ge q$.
The second Lie product in the difference vanishes because $\ell_r>q+p$.
Consequently, for $0<j\le p$ we have

\begin{align*}
0=[e_{s+\ell_r-q-j},e_{2q+j}]
&=\sum_{i=0}^{2q-p+j}(-1)^i \binom{2q-p+j}{i}
[e_{s+\ell_r-q-j+i},e_p,z^{2q-p+j-i}]
\\&=\sum_{i=0}^{2q-p+j}(-1)^i \binom{2q-p+j}{i}
\beta_{s+\ell_r-q-j+i}
e_{s+\ell_r+q}.
\end{align*}
We have $\beta_{s+\ell_r-q-j+i}=0$ in the range considered except possibly when
$q-p+j<i\le q+j$, where $\beta_{s+\ell_r-q-j+i}=\lambda_{-q+p-j+i}$.
Consequently, and restricting the range to $0<j<p$ for the last step, we find
\begin{align*}
0&=
\sum_{i=q+j-p+1}^{q+j} (-1)^i \binom{2q-p+j}{i} \lambda_{-q+p-j+i}
\\&=
\sum_{i=0}^{p-1} (-1)^{q+j-i} \binom{2q-p+j}{q+j-i} \lambda_{p-i}
\\&=
(-1)^{j+1}\sum_{i=0}^j (-1)^i \binom{j}{i} \lambda_{p-i}
\end{align*}
where we replaced $i$ with $q+j-i$ in the second line, and applied Lucas' theorem and the binomial identity $\binom{j}{j-i}=\binom{j}{i}$ in the third line.
This conclusion, for $0<j<p$, provides $p-1$ linearly independent homogeneous linear equations for $\lambda_1,\ldots,\lambda_p$,
and because $\sum_{i=0}^j (-1)^i \binom{j}{i}=0$ any equal values for $\lambda_1,\ldots,\lambda_p$ form a solution.
We conclude that $\lambda_1,\ldots,\lambda_p$ are all equal, and hence the $r$th constituent is ordinary, as desired.

\section{Construction of the exceptional algebras}\label{sec:exceptional}

In this section we construct the algebras of the exceptional family $\mathcal{E}$ in Theorem~\ref{thm:classification},
whose first constituent length $\ell$ satisfies $q<\ell<q+p$.
There will be one algebra for each value of a parameter $m$ in the range $0<m<p$, but when $m=p-1$ that algebra will actually not be exceptional,
being a translate of a subalgebra of an uncovered algebra of type $1$,
see Remark~\ref{rem:m=p-1}.
Our conclusions are summarized in Theorem~\ref{thm:exceptional}.
This section constitutes a proof of that result,
but supplements it with the additional information that all constituents past the first of those algebras are ordinary and end with the same constant.

Let $K=F(t)$ be the field of rational functions on an indeterminate $t$ over $F$, and consider the ring of divided powers $K[x;c]$, where $q=p^c$.
Thus, a basis of $K[x;c]$ consists of the monomials $x^{(i)}$, with $0\le i<q$ (writing $1$ for $x^{(0)}$), with multiplication given by
$x^{(i)}x^{(j)}=\binom{i+j}{i}x^{(i+j)}$
and extended linearly.
Note that the binomial coefficient vanishes modulo $p$ when $i,j<q$ but $i+j\ge q$, and so the right-hand side may be read as zero in that case.
Also, it will be both safe and convenient for our calculations to allow divided power $x^{(j)}$ with negative $j$ and interpret them as zero.

Let $\partial$ be the standard derivation of $K[x;c]$, hence $\partial x^{(i)}=x^{(i-1)}$ for $0<i<q$ and $\partial 1=0$,
and let $I$ denote the identity map of $K[x;e]$.
View $K[x;c]$ as an abelian Lie algebra, and $\partial$ as an element of the general Lie algebra $\mathrm{gl}(K[x;c])$.
We first look at the Lie subalgebra of $\mathrm{gl}(K[x;c])$ generated by
$Z=-\partial-tx^{(q-1)}I$
and $tx^{(q-p)}I$.
Thus,
$Zx^{(i)}=-x^{(i-1)}$ for $0<i<q$, but $Z1=-tx^{(q-1)}$.
The reason for the negative signs here is to avoid some awkward alternating signs later, due to our use of left-normed long Lie products
conflicting with the prevalent notation where derivations act on the left of their arguments.

We have
$[tx^{(q-p)}I,Z]
=-tx^{(q-p)}I\partial+\partial(tx^{(q-p)}I)
=tx^{(q-p-1)}I$,
and then, by an easy induction,
\[
[tx^{(q-p)}I,Z^j]
=tx^{(q-p-j)}I
\]
for $0<j\le q-p$.
(Here $Z^j$ refers to our long Lie product convention, not to a compositional power of $Z$.)
In particular, for $j=q-p$ we get $tI$, and hence
$tx^{(q-p)}I$ and $Z$ generate a Lie algebra of dimension $q-p+2$, nilpotent of class one less.
Also, if we give the vector space (or abelian Lie algebra) $K[x;c]$ a cyclic grading over the integers modulo $q$ by assigning degree $j$ (modulo $q$) to $x^{(q-j)}$ ,
then $Z$ and $tx^{(q-p)}I$ are graded derivations of $K[x;c]$, of degrees $1$ and $p$, respectively.

Now fix $0<m<p$,
and consider the semidirect sum of $K[x;c]$ and $\mathrm{gl}(K[x;c])$, hence with product
$[(f,A),(f',A')]=(Af'-A'f,[A,A'])$.
Inside that, consider the $F$-Lie subalgebra $L$ generated by
\[
z=(0,Z)
\qquad\text{and}\qquad
e_p=(x^{(q+m-p)},tx^{(q-p)}I).
\]
Then setting $e_j:=[e_p,z^{j-p}]$ for $j>p$, we have
\[
e_j=(x^{(q+m-j)},tx^{(q-j)}I)
\]
for $p\le j\le q+m$,
where the second entry is to be read as zero for $q<j\le q+m$ according to our stipulation on divided powers with negative exponents.
In particular, we have
$e_{q+m}=(1,0)$,
whence
$e_{q+m+1}=(tx^{(q-1)},0)$.
More generally, we have
\[
e_{rq+m+j}=(t^rx^{(q-j)},0)
\]
for $r>0$ and $0<j\le q$.

When we view the algebra $K[x;c]$
of divided powers
as an abelian Lie algebra over $F$ we can refine a shift of its $\Z/q\Z$-grading over $K$ described earlier to a $\Z$-grading,
by assigning degree $rq+m+j$ to $(t^rx^{(q-j)},0)$.
Then $z$ and $e_p$ act on $K[x;c]$ as graded derivations of degree $1$ and $p$ in this $\Z$-grading.
In particular, before doing any explicit calculation we see that
$[e_p,e_j]$ is an $F$-scalar multiple of $e_{p+j}$ for all $j\ge q$.
Together with previous information on the $\Z/q\Z$-grading we see that this extends to $j\ge p$.
Consequently, $L$ is a $\Z$-graded Lie algebra of maximal class, generated by elements of degree $1$ and $p$,
hence an algebra of type $p$ according to our terminology.

Now we look at the adjoint action of $e_p$ to find the constituent lengths.
For $p<j\le q-p+m$ we have
\begin{align*}
[e_j,e_p]
&=
[(x^{(q+m-j)},tx^{(q-j)}I),(x^{(q+m-p)},tx^{(q-p)}I)]
\\&=
(tx^{(q-j)}x^{(q+m-p)}-tx^{(q-p)}x^{(q+m-j)},0)
=0.
\end{align*}
Next, for $0\le j<p$ we have
\begin{align*}
[e_{q+m-j},e_p]
&=
[(x^{(j)},tx^{(j-m)}I),(x^{(q+m-p)},tx^{(q-p)}I)]
\\&=
\left(\binom{q-p+j}{j-m}-\binom{q-p+j}{j}\right)
(tx^{(q-p+j)},0)
\\&=
\left(\binom{j}{m}-1\right)
e_{q+m+p-j}.
\end{align*}
Because $\binom{p-1}{m}\equiv(-1)^m\pmod{p}$ and
$\binom{p-2}{m}=\frac{p-m-1}{p-1}\binom{p-1}{m}\equiv(-1)^m(m+1)\pmod{p}$,
the highest $j$ in the considered range $0\le j<p$ for which $\binom{j}{m}-1$ does not vanish modulo $p$ is $j=p-1$ when $m$ is odd,
and $j=p-2$ when $m$ is even.
Consequently, the first constituent of $L$ has length $q+m$ when $m$ is odd, and $q+m+1$ when $m$ is even.
We discuss the significance of this discrepancy in Remark~\ref{rem:fake_constituents},
and give an alternate description of the entries of the first constituent in Remark~\ref{rem:alternate}.

Now we look at constituents past the first.
For $r>0$ we have
\begin{align*}
[e_{rq+m+j},e_p]
&=
[(t^rx^{(q-j)},0),(x^{(q+m-p)},tx^{(q-p)}I)]
\\&=
-\binom{2q-p-j}{q-p}
(t^{r+1}x^{(2q-p-j)},0),
\end{align*}
whence
$[e_{rq+m+j},e_p]=0$ for $0<j\le q-p$,
and
$[e_{rq+m+j},e_p]=-e_{rq+m+p+j}$
for $q-p<j\le q$,
because
\[
\binom{2q-p-j}{q-p}
\equiv
\binom{q-p}{q-p}\binom{q-j}{0}=1
\pmod{p}.
\]
Consequently, when $m$ is odd all constituents past the first are ordinary of length $q$, ending in $-1$ (with our non-normalized choice of generators).
However, when $m$ is even all constituents past the first are ordinary, ending in $-1$,
but only those from the third on have length $q$,
with the second constituent having length $q-1$.

\begin{rem}\label{rem:fake_constituents}
The constituent structure of the algebras of the family $\mathcal{E}$ constructed here
would be marginally simpler to describe in terms of Scarbolo's definition of constituents in~\cite{Sca:thesis},
which we discussed at the end of Section~\ref{sec:const_lengths}:
irrespectively of the parity of $m$, Scarbolo's first constituent would have length $q+m$,
and all constituents past the first would be ordinary of length $q$.
\end{rem}

\begin{rem}\label{rem:alternate}
We give an alternate description of the first constituent of the algebra $L$ by computing the polynomial
$g(x)=\beta_{\ell-p+1}x^{p-1}+\beta_{\ell-p+2}x^{p-2}+\cdots+\beta_{\ell}$,
which was crucial in our proof of Theorem~\ref{thm:first_length}.
Differently from there, $g(x)$ will not be monic, as we have not applied the normalization $\beta_{\ell-p+1}=1$.
Note that $\binom{j}{m}\equiv(-1)^{m+j}\binom{p-1-m}{p-1-j}\pmod{p}$ for $m$ and $j$ in the given range.
Assuming first $m$ odd, where $\ell=q+m$, we have
\begin{align*}
g(x)
&=
\sum_{j=0}^{p-1}(-x)^j+\sum_{j=0}^{p-1}(-1)^{j-m}\binom{p-1-m}{j-m}x^j
\\&=
-(1-x)^{p-1}+
x^m(1-x)^{p-1-m},
\end{align*}
that is,
$g(x)=(1-x)^{p-1-m}\bigl(x^m-(1-x)^m\bigr)$.
When $m$ is even the above polynomial has degree $p-2$, hence $\ell=q+m+1$ as noted above,
and the actual $g(x)$ of the proof of Theorem~\ref{thm:first_length} is obtained
by multiplying it by $x$, thus finding
$g(x)=x(1-x)^{p-1-m}\bigl(x^m-(1-x)^m\bigr)$
in this case.
Note that in both cases $g(x)$ is a multiple of $(x-1)^{p-k_0}$
(where $k_0$, as defined in Subsection~\ref{subsec:spurious}, equals $m+1$ if $m$ is odd and $m+2$ if $m$ is even),
matching what we proved in Subsection~\ref{subsec:further_info}.
\end{rem}

\begin{rem}\label{rem:m=p-1}
When $m=p-1$ the algebra $L$ described in this section is not exceptional, being a translate of a subalgebra of an algebra of type $1$.
In fact, the sequence $(\beta'_i)_{i>p}$ of the translated algebra $L(1)$
satisfies
\[
\beta'_{iq+j}=
\begin{cases}
1&\text{$0<j\le q-p$},
\\
0&\text{$q-p<j\le q$}.
\end{cases}
\]
Because each block of entries equal to $1$ has length a multiple of $p$, all constituents of $L(1)$ are ordinary.
Consequently, $L(1)$ isomorphic to a subalgebra of an uncovered algebra $N$ of type $1$, according to Proposition~\ref{OrdinaryEnding}.
The sequence of constituent lengths of $L(1)$, and hence of $N$ as well, is
\[
2p,p^{q/p-2},2p-1,(p^{q/p-2},2p)^{\infty},
\]
where exponents, as in~\cite{CMN,CN}, denote repetition of that constituent length, or block of lengths, the specified number of times.
In the notation of~\cite{CMN,CN} the algebra $N$ is isomorphic to $AFS(1,h,\infty,p)$.
When $0<m<p-1$ one easily sees that no translate $L(\delta)$ of $L$ can have only ordinary constituents,
with the obstacle arising from the final entries of the first constituent of $L$.
\end{rem}

\bibliography{References}

\end{document}